\pgfplotsset{compat=1.10}
\newcommand{\close}{\!\!\!}
\newcommand{\rbb}{\mathbf{R}}
\newcommand{\RR}{\mathbf{R}}
\renewcommand{\L}{\mathcal{L}}
\newcommand{\B}{\mathcal{B}}
\newcommand{\A}{\mathcal{A}}
\newcommand{\Q}{\mathcal{Q}}
\newcommand{\R}{\mathcal{R}}
\newcommand{\PP}{\mathcal{P}}
\newcommand{\Rbf}{\boldsymbol{R}}
\newcommand{\la}{\langle}
\newcommand{\ra}{\rangle}
\newcommand{\N}{\mathcal{N}}
\newcommand{\xbb}{\boldsymbol{x}}
\newcommand{\ybb}{\boldsymbol{y}}
\newcommand{\ubf}{\boldsymbol{u}}
\newcommand{\xbf}{\mathbf{x}}
\newcommand{\ybf}{\mathbf{y}}
\newcommand{\zbf}{\mathbf{z}}
\newcommand{\xbftd}{\widetilde{\mathbf{x}}}
\newcommand{\xtil}{\widetilde{x}}
\newcommand{\ytil}{\widetilde{y}}
\newcommand{\ztil}{\widetilde{z}}
\newcommand{\ftil}{\widetilde{\varphi}}
\newcommand{\ghat}{\widehat{\psi}}
\newcommand{\ktil}{\widetilde{k}}
\newcommand{\Phitil}{\widetilde{\Phi}}
\newcommand{\alphah}{\alpha_h}
\newcommand{\gammah}{\widetilde{\gamma}}
\newcommand{\fO}{\Phi_\text{O}}
\newcommand{\RO}{\mathcal{R}_\text{O}}
\newcommand{\RI}{\mathcal{R}_\text{I}}
\newcommand{\mi}{\wedge}
\renewcommand{\d}{\text{d}}
\newcommand{\dt}{\text{dt}}
\newcommand{\dW}{\text{d}W}
\newcommand{\domain}{\mathcal{O}}
\newcommand{\f}{\varphi}
\newcommand{\g}{\psi}
\newcommand{\Sbf}{\mathbf{S}}
\newcommand{\E}{\mathbf{E}}
\renewcommand{\P}{\mathbf{P}}
\newcommand{\M}{\mathcal{M}}
\newcommand{\T}{\mathcal{T}}
\newcommand{\Wbf}{\mathbf{W}}
\numberwithin{equation}{section}
\theoremstyle{plain}
\newtheorem{theorem}{Theorem}[section]
\newtheorem{corollary}[theorem]{Corollary}
\newtheorem{lemma}[theorem]{Lemma}
\newtheorem{choices}[theorem]{Choice}
\newtheorem{proposition}[theorem]{Proposition}
\newtheorem{definition}[theorem]{Definition}
\newtheorem{remark}[theorem]{Remark}
\title[Turbulent transport of heavy particles in rough flows]{Stability and invariant measure asymptotics in a model for heavy particles in rough turbulent flows}
\author[ D.~Herzog and H.~Nguyen]{David P.~Herzog$^1$ and Hung D.~Nguyen$^2$}
\address{$^1$ Department of Mathematics, Iowa State University, Iowa, USA}
\address{$^2$ Department of Mathematics, University of Tennessee, Knoxville, TN, USA}
\begin{document}

\begin{abstract}
We study a system of Skorokhod stochastic differential equations (SDEs) modeling 
the pairwise dispersion (in spatial dimension $d=2$) of inertial particles transported by a rough turbulent flow with H\"{o}lder exponent $h\in (0,1)$. Under the assumption that $h>0$ is sufficiently small, we use Lyapunov methods and control theory to show that the Markovian system is nonexplosive and has a unique, exponentially attractive invariant probability measure.  Furthermore, our Lyapunov construction is radially sharp and gives partial confirmation on a predicted asymptotic behavior with respect to the H\"{o}lder exponent $h$ of the invariant probability measure.  A physical interpretation of the asymptotics is that intermittent clustering is weakened when the carrier flow is sufficiently rough.

\end{abstract}
\maketitle
\section{Introduction} \label{sec:intro}

In this paper, we rigorously analyze a model for the dynamics of two inertial (i.e. \emph{heavy}) particles transported by a random fluid velocity field.  Our goal is to understand the relative spatial statistics of the particles as they depend on the H\"{o}lder exponent $h\in (0,1]$ of the carrier fluid.  While the case of smooth fields, i.e. when $h=1$, has been extensively studied from a rigorous mathematical standpoint~\cite{athreya2012propagating, birrell2012transition,gawedzki2011ergodic,herzog2015noise}, our efforts focus on the case of a \emph{rough} field; that is, when the H\"{o}lder exponent satisfies $h\in (0,1)$, where little is known rigorously.

Unlike simple point-like tracers, inertial particles have finite size and mass different than that of the carrier fluid.  Consequently, they encounter viscous drag forces and have significantly different dynamics.  As a concrete example, inertial particles form strongly inhomogeneous spatial patterns in the transporting fluid, giving rise to the phenomenon known as \emph{intermittent clustering}.  Understanding this phenomenon is of paramount importance and has been studied in a number of settings under various assumptions~\cite{bec2005multifractal,
bec2007clustering,bec2007heavy, bec2008stochastic,duncan2005clustering, elperin1996self, falkovich2002acceleration,fouxon2008separation, horvai2005lyapunov, maxey1983equation, mehlig2004coagulation, piterbarg2002top, wilkinson2003path}.  

As opposed to smooth carrier flows, less attention from both physical and mathematical perspectives has been paid to understanding the two-point statistics of inertial particles advected by rough velocity fields~\cite{bec2007clustering}.  See, however, the few works~\cite{BDG_2004, CGV_2006, FSS_2005, SS_2002} where rough flows are treated.  In this setting, the carrier fluid is above the Kolmogorov length scale, which is the smallest active length scale of the flow, and the precise mechanisms that induce both separation and clustering between particles are less understood.  However, significant insight into clustering above the Kolmogorov length scale was made in the work~\cite{bec2007clustering}, where the model studied in this paper was first analyzed using heuristic scaling arguments.  There, as opposed to smooth flows, it was shown that in this model particles do not form fractal clusters.  Rather, clustering is actually weakened by the roughness of the fluid, as measured by power-law moment estimates on the steady-state marginal distribution of the relative velocity of two particles~\cite{bec2007clustering}.  Our goal in this paper will be to rigorously establish these moment estimates.

Extrapolating these results to realistic turbulent flows suggests similar power-law type behavior. However, while the model studied in this paper has remnants of real world turbulence, in the sense that the particles experience both ejection from eddies and dissipative forces, the specific dynamics here focuses more so on the latter and furthermore ignores gravitational effects.  Thus, depending on the delicate balance of these neglected effects, this particular power-law behavior may not be characteristic of real turbulent flows.

\subsection{The equations}
Assuming particles are
small and spherical and gravitational effects can be neglected, a simplified description
for an inertial particle's displacement is given by the Newtonian equation~\cite{bec2007clustering, bec2007heavy,  maxey1983equation, mehlig2004coagulation, wilkinson2003path}
\begin{equation} \label{eqn:turbulence.equation}
\ddot{\xbb} =-\frac{1}{\tau}[\dot{\xbb}- \ubf(\xbb,t)],
\end{equation}
where in the relation above, $\tau>0$ is the Stokes time and $\ubf$ is the fluid velocity field.  Note that~\eqref{eqn:turbulence.equation} is a modification of the Lagrangian equation \cite{bec2008stochastic} where the velocity field is coupled with a viscous drag force ($-\tau^{-1} \dot{\xbb}$) due to fluid bombardment.

Considering the behavior of two, non-interacting particles $\xbb$ and $\ybb$ satisfying~\eqref{eqn:turbulence.equation}, the \emph{particle separation} $\Sbf:=\ybb-\xbb$ obeys the equation
\begin{equation} \label{eqn:turbulence.distance.equation}
\ddot{\Sbf} =-\frac{1}{\tau}[\dot{\Sbf}-\delta \ubf(\Sbf,t)],
\end{equation}
where $\delta\ubf:=\ubf(\xbb+\Sbf,t)-\ubf(\xbb,t)$ is the fluid velocity difference.  Throughout, we assume that $\ubf$ is a stationary, homogeneous and isotropic Gaussian field satisfying \cite{bec2007clustering,kraichnan1968small}
\begin{equation} \label{eqn:turbulence.distance.equation:roughflow}
\la u_i(\xbb,t)u_j(\xbb',t')\ra = 2\mathrm{D}_0\delta^i_j-\mathrm{B}^i_j(\xbb-\xbb')\delta(t-t'),
\end{equation} 
where $\mathrm{D}_0$ is the velocity variance and the matrix function $\mathrm{B}$ is given by
\begin{equation} \label{eqn:turbulence.distance.equation:roughflow:B}
\mathrm{B}^i_j(\xbb)=D_1 |\xbb|^{2h}[(d-1+2h)\delta^i_j-2hx_ix_j/|\xbb|^2 ].
\end{equation}
In the above, $h\in(0,1)$ is the \emph{H\"{o}lder exponent} of the velocity field, as discussed above, $D_1$ is a nonzero constant measuring the intensity of the turbulence, and $\langle \cdots \rangle$ denotes the expectation.  Throughout the paper, we restrict to the case when the spatial dimension $d=2$; that is, $\Sbf=(S_1, S_2)$.

Observe that we may equivalently express relation~\eqref{eqn:turbulence.distance.equation} as an It\^{o} stochastic differential equation for the pair $(\Sbf, \dot{\Sbf})$ as
\begin{align}
\label{eqn:foSDEdiff}
\d\Sbf& = \dot{\Sbf} \, \d t, \\
\nonumber \d\dot{\Sbf}&= - \frac{\dot{\Sbf}}{\tau} \, \d t + \frac{\Sigma(\Sbf)}{\tau}\,  \d\Wbf_t ,
\end{align}
where $\Wbf_t$ is a standard, two-dimensional Brownian motion and $\Sigma(\Sbf)$ is a $2\times 2$ non-constant matrix given by 
\begin{align*}
\Sigma(\Sbf) = \begin{bmatrix}
\sqrt{2D_1} |\Sbf|^{h-1} S_1 & - \sqrt{2D_1} \sqrt{1+2h} |\Sbf|^{h-1} S_2 \\
\sqrt{2D_1} |\Sbf|^{h-1} S_2 & \sqrt{2D_1}\sqrt{1+2h} |\Sbf|^{h-1} S_1  
\end{bmatrix}.\end{align*}
In the above, $|\Sbf|$ denotes the Euclidean norm of $\Sbf$. Letting $\Rbf_t= \Sbf_{t\tau}$ and $\dot{\Rbf}_t=\dot{\Sbf}_{t\tau}$, we make the following change of variables as in~\cite{bec2007clustering}
\begin{equation} \label{eqn:change.variable:XYZ}
X =\frac{\tau}{L^2}\bigg(\frac{|\Rbf|}{L}\bigg)^{-(1+h)}\Rbf\cdot\dot{\Rbf},\qquad
Y =\frac{\tau}{L^2}\bigg(\frac{|\Rbf|}{L}\bigg)^{-(1+h)}(R_1 \dot{R}_2-R_2 \dot{R}_1),\qquad
Z=\bigg(\frac{|\Rbf|}{L}\bigg)^{1-h},
\end{equation}
where the dot denotes the time derivative, $X$ and $Y$ respectively denote the (dimensionless) longitudinal and transverse velocity differences and $L>0$ is a fluid boundary constant controlling the size of the particles' separation.  That is, we assume throughout that $|\Rbf| \leq L$ with reflective boundary conditions at $|\Rbf|=L$ (see the paragraph below and Section~\ref{sec:mainresult} for further details).  This is done because in unbounded carrier flows, the distance between two particles can grow indefinitely and thus never achieve stationarity \cite{bec2005multifractal,bec2007clustering,
bec2007heavy,bec2008stochastic}.  Although the added boundary conditions break self-similarity of the flow, clustering phenomena can be observed for $|\Rbf|\ll L$ by heuristic scaling arguments~\cite{bec2007clustering}.

Under the change of coordinates~\eqref{eqn:change.variable:XYZ}, the system~\eqref{eqn:foSDEdiff} becomes 
\begin{equation} \label{eqn:XYZ:no.reflection}
\begin{aligned}
\d X_t &=- X_t \, \d t -\frac{hX_t^2-Y_t^2}{Z_t} \d t+\sqrt{2a}\, \d W^1_t,\\
\d Y_t&= - Y_t \, \d s -(1+h)\frac{X_tY_t}{Z_t} \,\d s+\sqrt{2a(1+2h)}\d W^2_t,\\
\d Z_t&=(1-h)X_t\, \d t,
\end{aligned}
\end{equation}
where $a=a(L, D_1, \tau)>0$ is a positive constant and $(W^1_t,W^2_t)$ is a standard two-dimensional Brownian motion. Incorporating the imposed reflective boundary conditions at $|\Rbf|=L$, which in particular pushes the dynamics inward to the domain $\{|\Rbf|< L\}$ whenever the threshold $\{|\Rbf|= L\}$ is reached, translates to an additional reflective term at $\{Z=1\}$ in equation~\eqref{eqn:XYZ:no.reflection}.  This reflective term, in particular, supplements equation~\eqref{eqn:XYZ:no.reflection} to produce a Skorokhod SDE (see~\eqref{eqn:xyz} for further details).  Formally setting $h=1$ in the equation above and assuming $Z\equiv 1$, we arrive at the system, and variations thereof, studied rigorously in~\cite{athreya2012propagating,gawedzki2011ergodic, herzog2015noise, herzog2015noiseII}. In contrast to these works, our main focus here is to study the behavior of this Skorokhod equation; that is, equation~\eqref{eqn:XYZ:no.reflection} with the reflective boundary condition, when $h>0$ is small.

\subsection{History and results}
In the case of spatially smooth flows; that is, when $h=1$ and $Z\equiv 1$ in~\eqref{eqn:XYZ:no.reflection}, absence of finite-time explosion (for $(X,Y)$ in $\RR^2$) and the existence/uniqueness of an invariant probability measure was first established in~\cite{gawedzki2011ergodic}.  Similar results were also shown there in general dimensions $d\geq 3$, but in this setting the equation as well as the phase space changes slightly.  The arguments hinged on the existence of various Lyapunov functions, the constructions of which were involved since the underlying deterministic dynamics, i.e., when the noise coefficients are set to $0$,  has trajectories leading to finite-time explosion.  That is, if one starts the deterministic equation on the negative $X$ axis when $|X|$ is sufficiently large, the resulting system blows-up in finite time.  Thus, as a consequence of~\cite{gawedzki2011ergodic}, the noisy system stabilizes the underlying instability.

This work led to further studies on methods to better construct Lyapounov functions.  First, the work~\cite{athreya2012propagating} developed a propagation procedure, applied to the example of a smooth flow, which was used to prove that the unique invariant measure is exponentially attractive in a uniform sense with respect to the initial condition.  These methods were later improved in~\cite{herzog2015noise} and applied to a general family of planar flows yielding asymptotic estimates on the behavior of the invariant measure at the point at inifinity.  The paper~ \cite{birrell2012transition} analyzed a simplified model of the rough equation~\eqref{eqn:XYZ:no.reflection} where $Z$ was assumed to be a fixed constant but $h>0$ was assumed to be general.  Interestingly, the system is well behaved in the physical range of parameters, but if one slightly modifies the coefficients, as done there, the system explodes in finite time with positive probability.

An additional result of physical interest originally obtained in~\cite{gawedzki2011ergodic} is that, in the case of smooth flows, the invariant probability density (with respect to Lebesgue measure in $\RR^2$) $\rho(x,y)$ satisfies the asymptotic formula 
\begin{align}
\label{eqn:asyxy}
 \rho(x,y) = \frac{\rho_*}{r(x,y)^4} +o(r(x,y)^{-4}) \,\, \text{ as }\,\, r(x,y)\rightarrow \infty, 
\end{align}
for some constant $\rho_*>0$ where $r(x,y) =\sqrt{x^2+y^2}$.  The fact that this distribution is heavy-tailed with \emph{very} heavy tail $\rho_*/r(x,y)^4$ indicates the presence and strength of intermittent clustering.  Following the numerics and heuristics given in~\cite{bec2007clustering}, in the case of rough flows it is conjectured that the $X$-marginal $\tilde{\pi}$ of the invariant probability measure $\pi$ (supposing its existence) satisfies
\begin{align}
\label{eqn:tpi}
\tilde{\pi}(\d x) \propto |x|^{-2/h-1} \, \d x \,\, \text{ as } \,\, x\rightarrow -\infty.  
\end{align}   
Note that as $h>0$ becomes small in~\eqref{eqn:tpi}, the tail of the measure $\tilde{\pi}$ becomes \emph{less} heavy-tailed, indicating that intermittent clustering is weakened as the flow becomes rougher.  Also note that, after formally integrating out $y$ in~\eqref{eqn:asyxy}, we see that the formula~\eqref{eqn:tpi} is in agreement with~\eqref{eqn:asyxy} when $h=1$.  A partial resolution of this conjecture is one of the main results of this work; that is, we will show that for all $h>0$ small enough, $\pi$ exists, is exponentially attractive in total variation and 
\begin{align}
\label{eqn:mominvm}
\int_{\rbb^2\times (0,1]}\close r(x,y)^\lambda \, \pi( \d x, \d y, \d z) < \infty \,\, \text{ for all } \,\, 0 < \lambda < 2/h,
\end{align}     
cf. Theorem~\ref{thm:geometric.ergodicity} below.

While the main results in this paper represent progress in understanding the pairwise statistics of heavy particles advected by rough flows, it is important to highlight where our methods fall short in resolving similar conjectures for general H\"{o}lder exponents $h\in (0,1)$.  Different from previous works in similar settings, unique challenges are faced in the rough regime because of the additional $Z$ coordinate in equation~\eqref{eqn:XYZ:no.reflection} and its associated dynamics near $Z=0$ and $Z=1$.  While the reflection term at $Z=1$ in the equation presents its own set of difficulties, the dynamics near $Z=0$ is especially important from a physical perspective as it corresponds to the regime where $|\Rbf|\ll L$.  However, understanding this dynamics is nontrivial due to its singular behavior in the equation.  This is especially true in the region where $|(X,Y)|\sim Z^{1/3}$ and $Z>0$ is small, where almost all terms in equation~\eqref{eqn:XYZ:no.reflection} are in the same dominant balance.  In particular, when analyzing the equation in this regime, very few terms are negligible and thus the resulting dynamics is complicated.  This is where our restriction on $h>0$ is imposed.  The restriction that $h>0$ is sufficiently small allows us to construct the relevant Lyapunov function using various solutions to Laplace's equation, as the noise coefficients in~\eqref{eqn:XYZ:no.reflection} are approximately the same for $h>0$ small.   Away from this restriction, the construction in this region fails.  Resolving this issue remains an open problem, but doing so would ultimately yield the same conclusions in this paper for general $h\in (0,1)$.

\subsection{Remarks on reflective SDEs}
The area of reflective SDEs is arguably less traversed than usual It\^{o} diffusions.  However, we need to make use of such equations in this paper to deal with the reflective boundary condition at $|\Rbf|=L$ or, as in the case of~\eqref{eqn:XYZ:no.reflection}, at $Z=1$.     

In general, a Skorokhod SDE with a normal reflective boundary in a smooth domain $\domain\subset \rbb^d$ is an equation of the form
\begin{equation} \label{eqn:Skorokhod.eq}
\d\xbf_t = f(\xbf_t)\d t+\mathrm{A}(\xbf_t)\d W_t+g(\xbf_t)\d K_t,
\end{equation}
where for $\xbf\in\partial\domain$, $g(\xbf)$ is a vector field pointing inward towards $\domain$ and $K_t$ is a monotone increasing function that only increases when $\xbf_t\in\partial\domain$. Intuitively, $K_t$ is a control mechanism that is only activated when $\xbf_t$ touches the boundary, which in particular prevents the process from escaping the domain $\overline{\domain}$. For a mathematical introduction to SDEs of Skorokhod type, we refer the reader to \cite{andres2009diffusion,pilipenko2014introduction}.

SDEs with reflection arise naturally in a number of applied settings such as in queuing theory \cite{anantharam1993optimal,
dupuis1998skorokhod,dupuis2000multiclass,mandelbaum1995strong} and in optimal control of fluid systems \cite{vcudina2011asymptotically,menaldi1989optimal}. Historically, the Skorokhod problem in a domain was studied as early as in the works of \cite{skorokhod1961stochastic,skorokhod1962stochastic,wentzel1959boundary}.  Since then, various types of domains have been investigated, e.g., half space \cite{chaleyat1980reflexion,el1978probleme,mckean1963skorohod}, convex sets \cite{dupuis1999convex,lions1981construction,
tanaka2002stochastic}, smooth domains \cite{stroock1971diffusion}, nonsmooth domains satisfying a uniform exterior sphere condition \cite{costantini1992skorohod,lions1984stochastic} or more general conditions \cite{dupuis1993sdes}. A critical object of Skorokhod theory is reflected Brownian motion (RBM). Although this process is perhaps dynamically simple on the interior of the domain, the additional boundary effects for RBM can lead to a number of difficulties in its analysis depending on the structure of the boundary. Nevertheless, Lyapunov-type methods have been previously used to establish ergodic properties for RBM~\cite{atar2001positive,
budhiraja1999simple,chen1996sufficient,dupuis1994lyapunov}. Furthermore, characterizations of such stationary distributions were given in~\cite{harrison1987brownian,
kang2014characterization,sarantsev2017reflected}. In our situation~\eqref{eqn:xyz}, the dynamics itself is complicated but the boundary effects are relatively simple. However, because of the reflecting boundary, one has to modify the typical paths used to establish large-time properties of the system.  Specifically, in order to exhibit a suitable Lyapunov function for the Markov semigroup, one has to control both the \emph{interior generator}, corresponding to the dynamics on the interior of the domain $\domain$, as well as a boundary operator. Additionally, in order to verify the minorization condition on a sufficiently large compact set in $\domain$, obtaining existence and regularity of probability densities, as well as support properties of the corresponding transitions, is not obvious because of the boundary and our interior operator is weakly hypoelliptic. We work around these issues using hypoellipticity on the interior of the domain and establishing a support theorem specific to the system~\eqref{eqn:xyz}.    

\subsection{Organization of paper}
The rest of the manuscript is organized as follows. In Section \ref{sec:mainresult}, we introduce the Skorokhod SDE (cf. \eqref{eqn:xyz}) studied throughout the paper. There, we also state our main results, especially Theorem~\ref{thm:geometric.ergodicity}, and outline how we plan to prove them in later sections. Section~\ref{sec:intuition} provides some heuristic arguments used to build intuition about the dynamics~\eqref{eqn:xyz}. In Section~\ref{sec:minorization}, with the help of control theory, we establish the minorization condition, which is a key component in showing the system is geometrically ergodic. We then discuss and lay out the construction of our Lyapunov functions in Section~\ref{sec:Lyapunov}.  The existence of these functions allows us to both prove that the first return time to the \emph{center of space} has exponential moments and that the invariant probability measure has certain moments as in~\eqref{eqn:mominvm}. The full proof of all Lyapunov properties will be detailed later in Section~\ref{sec:Lyapunov:proof} and Section~\ref{sec:flux}. The proof of the main result, Theorem~\ref{thm:geometric.ergodicity}, will be presented in Section~\ref{sec:proof-thm-ergodicity}.

\section{Notation, Mathematical Setting and Main Results} \label{sec:mainresult}
Throughout, we let $(\Omega, \mathcal{F}, (\mathcal{F}_t)_{t\geq 0},  \P)$ be a filtered probability space satisfying the usual conditions \cite{karatzas2012brownian} and $(W^1_t,W^2_t)$ be a standard two-dimensional Brownian Motion on $(\Omega, \mathcal{F},\P)$ adapted to the filtration $(\mathcal{F}_t)_{t\geq 0}$.  We consider the following system of reflected stochastic differential equations of Skorokhod type \begin{equation} \label{eqn:xyz}
\begin{aligned}
\d x_t &= -\gamma x_t \, \d t -\frac{hx_t^2-y_t^2}{z_t} \, \d t+\sqrt{2\kappa_1}\, \d W^1_t,\\
\d y_t&= -\gamma y_t \, \d t -(1+h)\frac{x_t y_t}{z_t} \, \d t+\sqrt{2\kappa_2} \, \d W^2_t,\\
\d z_t&=(1-h)x_t\, \d t-\d k_t.
\end{aligned}
\end{equation}
The process $\xbf_t=(x_t, y_t, z_t)$ above evolves on the domain $\domain=\rbb\times\rbb\times(0,1]$ and the constant parameters $\gamma$, $h$ and $\kappa_i$ satisfy
$$\gamma>0,\quad h\in (0,1),\quad\text{and}\quad \kappa_i>0,\,i=1,2.$$   
The $(\mathcal{F}_t)-$adapted process $k_t$ is assumed to satisfy the following:
\begin{itemize}
\item $k_0=0$ and $k_t$ is continuous, non-decreasing;
\item $k_t$ only increases when $z_t=1$; that is, 
\begin{equation} \label{cond:k_t}
k_t< \infty,\qquad
\d k_t\geq 0,\quad\text{and}\quad   \int_0^\infty \mathbf{1} \{z_t<1\} \, \d k_t=0.
\end{equation}
\end{itemize}
In short, when on the interior of the domain $\domain$, the process $\xbf_t=(x_t, y_t, z_t)$ behaves according to system~\eqref{eqn:xyz} without the reflection term $d k_t$.  When the process hits the boundary $\{ z=1\}$, it is then reflected downward in the $z$-direction via the local time term $\d k_t$ in order to stay in $\domain$.  Given this explanation, however, several important issues remain evident when making sense of relation~\eqref{eqn:xyz}, the first of which is its well-posedness.  

To this end, for $R>0$ let 
\begin{align}
\label{eqn:OR}
\domain_R= \{ (x,y,z) \in \domain \, : \, |(x,y)|\leq  R\, \text{ and }\, z\geq 1/R\},
\end{align}
and define stopping times 
\begin{align}  \label{eqn:tau_R}
\tau_R = \inf \{ t\geq 0 \, : \, \mathbf{x}_t \notin \domain_R\}\qquad \text{ and } \qquad \tau= \lim_{R\rightarrow \infty} \tau_R.
\end{align}
We call $\tau$ the \emph{time of explosion} for system~\eqref{eqn:xyz}. By now standard arguments~\cite{khasminskii2011stochastic, lions1984stochastic,tanaka2002stochastic}, a pair of continuous $\mathcal{F}_t-$adapted processes $(\xbf_t,k_t)$ satisfying ~\eqref{eqn:xyz} and~\eqref{cond:k_t} exists pathwise and is unique for all times $t\geq 0$ with $t< \tau$. This stems from the fact that up to time $\tau_R$, the system \eqref{eqn:xyz} agrees with a system that has globally Lipschitz coefficients, allowing us to employ \cite[Theorem 4.1]{tanaka2002stochastic} to establish local well-posedness. Our first result shows that, in fact, such solutions $(\xbf_t,k_t)$ exist and are unique for all finite times $t\geq 0$ almost surely, provided the positive noise parameters $\kappa_i$, $i=1,2$, satisfy further conditions.   
\begin{proposition}
\label{prop:ne}
There exists a constant $0<c_*< 1$ sufficiently small such that for all $\kappa_1> 0$ and $\kappa_2>0$  satisfying 
\begin{equation} \label{cond:c_*}
\Big|\frac{\kappa_1}{\kappa_2}-1\Big|<c_*,
\end{equation}
the locally defined solution $(\xbf_t, k_t)$ is \textbf{nonexplosive}; that is, for all initial conditions $\mathbf{x}=(x, y, z) \in \domain$ we have 
\begin{align*}
\P_{\mathbf{x}}\{ \tau< \infty\} =0.  
\end{align*}
\end{proposition}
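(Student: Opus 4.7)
The plan is to construct a Lyapunov function $V \in C^2(\domain)$ enjoying three properties: (L1) $\inf_{\xbf \in \domain \setminus \domain_R} V(\xbf) \to \infty$ as $R \to \infty$; (L2) $\mathcal{L} V \leq C V + C'$ on the interior of $\domain$ for some constants $C, C' \geq 0$, where $\mathcal{L}$ is the generator of \eqref{eqn:xyz}; and (L3) $\partial_z V \geq 0$ along the reflecting boundary $\{z = 1\}$. Given such a $V$, I would apply It\^o's formula to $V(\xbf_{t\wedge\tau_R})$, drop the local-time integral $-\int_0^{\cdot}\partial_z V(\xbf_s)\,\d k_s$ (nonpositive by (L3)), take expectations, and apply Gronwall via (L2) to get
\begin{equation*}
\E_\xbf V(\xbf_{t\wedge\tau_R}) \,\leq\, \bigl(V(\xbf) + C' t\bigr)\, e^{C t}.
\end{equation*}
Combining this with (L1) and the Chebyshev-type bound $\P_\xbf\{\tau_R \leq t\}\,\inf_{\xbf\in\domain\setminus\domain_R} V \leq \E_\xbf V(\xbf_{t\wedge\tau_R})$, I would conclude $\P_\xbf\{\tau_R \leq t\} \to 0$ as $R \to \infty$, hence $\P_\xbf\{\tau < \infty\} = 0$.

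The main difficulty is exhibiting $V$. The noiseless version of \eqref{eqn:xyz} (set $\kappa_1 = \kappa_2 = 0$) already blows up in finite time from initial data in the ``bad cone'' $\{x < 0,\, z \text{ small}\}$: the reduced ODE $\dot x = -h x^2/z$, $\dot z = (1-h) x$ has explicit integral curves along which $|x| \propto z^{-h/(1-h)}$, sending $|x| \to \infty$ and $z \to 0^+$ in finite time. Any admissible $V$ must diverge on approach to this singular curve at infinity and exploit the noise $\kappa_1, \kappa_2 > 0$ to close the drift inequality in (L2).

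A convenient first building block is $V_1 = (x^2 + y^2)\, z^{2h/(1-h)}$. A direct It\^o computation shows that the exponent $s = 2h/(1-h)$ is chosen precisely so that the singular drift $-2 h x (x^2+y^2)/z$ coming from the $(hx^2 - y^2)/z$ and $(1+h)xy/z$ nonlinearities cancels exactly the contribution $s(1-h) x(x^2+y^2)/z$ produced by $(1-h) x\,\partial_z V_1$, leaving
\begin{equation*}
\mathcal{L} V_1 \,=\, -2\gamma V_1 + 2(\kappa_1+\kappa_2)\, z^{2h/(1-h)} \,\leq\, -2\gamma V_1 + 2(\kappa_1+\kappa_2).
\end{equation*}
Moreover $\partial_z V_1|_{z=1} = s(x^2+y^2) \geq 0$, so (L3) is automatic for $V_1$. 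This $V_1$ controls the $r \to \infty$ behaviour and is constant along the blow-up curve (hence sensitive to transverse excursions from it), but it vanishes as $z \to 0^+$, so $V_1$ cannot enforce (L1) on its own.

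The remaining step is adding a second piece $V_2$ that diverges as $z \to 0^+$, and this is where the real work lies. A bare choice $V_2 = z^{-\sigma}$ generates the drift $-\sigma(1-h) x/z^{\sigma+1}$, which is positive and singular precisely in the bad cone, so $V_2$ must be mixed with a suitable function of $(x, y)$ whose diffusion, contributed by both $\kappa_1\partial_x^2$ and $\kappa_2\partial_y^2$, absorbs this residual drift. It is in this noise balance that the isotropy assumption $|\kappa_1/\kappa_2 - 1| < c_*$ intervenes: the cross terms in $\mathcal{L}(V_1 + V_2)$ can be dominated by the stabilizing $-2\gamma V_1$ and the diffusion of $V_2$ only when the two noise intensities are sufficiently close. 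A small additive modification near $z = 1$ (for instance replacing $z^{-\sigma}$ by $z^{-\sigma} + \sigma(z - 1)$, which arranges $\partial_z V|_{z=1} = 0$ while contributing only a bounded term to $\mathcal{L} V$) then secures (L3). The construction of $V_2$ and the verification of (L1)--(L3) for the assembled $V = V_1 + V_2$ is a direct but technically involved It\^o calculation; this is the step I expect to be the main obstacle, which is why the paper defers it to its dedicated Lyapunov sections.
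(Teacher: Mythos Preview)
Your framework (L1)--(L3) is right, and the identity $\L V_1=-2\gamma V_1+2(\kappa_1+\kappa_2)z^s$ for $V_1=(x^2+y^2)z^{2h/(1-h)}$ is correct. The gap is in $V_2$, and your heuristic mislocates the mechanism. You say $|\kappa_1/\kappa_2-1|<c_*$ enters through ``cross terms in $\L(V_1+V_2)$'' that diffusion must absorb, but that is not it. After the substitution $u=xz^{-1/3}$, $v=yz^{-1/3}$ (Section~\ref{sec:intuition}), one has $\L=z^{-2/3}\M$ with $\M\approx\A+(1-h)uz\partial_z$ near $z=0$, where $\A$ generates a planar diffusion $(U_t,V_t)$ \emph{decoupled from $z$}. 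Any $z$-barrier such as $-\log z$ or $z^{-\sigma}$ contributes a drift $\propto -uz^{-2/3}$, which is singular with the wrong sign on the whole half-space $\{u<0\}$; no pointwise diffusion bound can repair this. The paper's device is an \emph{averaging Lyapunov function}: a $\psi(u,v)$ with $\A\psi\le C_1u-\epsilon$ (Definition~\ref{def:averaging.Lyapunov}), so that $\L\big(D\psi-\tfrac{DC_1}{1-h}\log z\big)\le -D\epsilon\,z^{-2/3}+O(1)$, the $u$-terms cancelling exactly. The existence of such $\psi$ is essentially the statement $\int U\,\d\mu>0$ for the invariant measure of $\A$ (Lemma~\ref{lem:mu_h>0}), and near-isotropy is used \emph{only} to verify this sign via an explicit construction (Lemma~\ref{lem:averaging.Lyapunov}). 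It is an ergodic-averaging argument, not a diffusion-versus-drift balance.

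There is a second, subtler issue. Your $V_1$ yields only $-2\gamma r^2 z^{2h/(1-h)}$ of dissipation, which vanishes as $z\to 0$. The boundary correction you propose (an additive $\sigma(z-1)$, or the paper's $Az$) then generates a linear drift $(1-h)Ax$, and along paths $r\to\infty$, $z\to 0$ with $r^2z^{2h/(1-h)}$ bounded this is unabsorbed (at least once $h$ is not very small). The paper deliberately does \emph{not} cancel the singular drift: Choice~\ref{cond:pq} takes $\alpha_hp_i-(1-h)q_i>0$, leaving a residual $-\beta_i u\,r^{p_i}z^{q_i-2/3}$ that is dissipative only in the cone $\R_0=\{u\gtrsim|v|\}$. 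The propagation of $\f_{0,i}$ through $\R_1,\R_2,\R_3$ via boundary-value PDEs is precisely what repairs the sign in the remaining cones and produces $\L\fO\lesssim-r^{p_i+1}z^{q_i-2/3}$ \emph{uniformly in $z$}---strong enough to swallow the linear correction for every $z\in(0,1]$. So the outer decomposition is not overkill for nonexplosion; a version of it is already needed here.
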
  

The argument for Proposition \ref{prop:ne} will be briefly explained in Appendix~\ref{sec:wellposed}.

\begin{remark} \label{rem:wel-posed}
Proposition~\ref{prop:ne} will be a consequence of the proof of one of the main results below (Proposition~\ref{prop:Lyapunov:xyz}) but it should be noted that it by itself is nontrivial.  Indeed, this is because the solution in the absence of noise, i.e., $\kappa_1=\kappa_2=0$ above, when started on the domain $\{ x< -K \} \cap \{ y=0\}$ explodes in finite time for $K>0$ sufficiently large.  This can be shown by using a comparison argument after noting that, with the reflection term, $z_t \leq 1$, and thus any such solution with these initial conditions has 
\begin{align*}
\dot{x}_t \leq - x_t - h x^2_t.  
\end{align*}  
As a consequence, the random perturbation makes large excursions when it is sufficiently close to the region $\{x<-K\}\cap \{y=0\}$.  
\end{remark}

By the uniqueness established in Proposition~\ref{prop:ne}, using a similar argument as in the case of SDEs without reflection \cite{albeverio2008spde,oksendal2003stochastic}, it can be shown that $\xbf_t$ and $(\xbf_t,k_t)$ are both Markov.  See, for example, \cite[Theorem 1.2.2]{
pilipenko2014introduction}. We note however that in general, $k_t$ itself is not Markov.  See \cite[Section 2]{pilipenko2014introduction} for a more detailed discussion. We can thus introduce the Markov transitions of $\xbf_t$ by
\begin{align*}
\PP_t(\xbf,A):=\P_{\xbf}(\xbf_t\in A),
\end{align*}
defined for times $t\geq 0$, initial states $\mathbf{x} \in \domain$ and Borel sets $A\subseteq \domain$. Letting $\B_b(\domain)$ denote the set of bounded Borel measurable functions $f:\domain \rightarrow \rbb$, the associated Markov semigroup $\PP_t:\B_b(\domain)\to\B_b(\domain)$ is defined and denoted by
\begin{align*}
\PP_t f(\xbf)=\E_{\xbf}[f(\xbf_t)], \,\, f\in \B_b(\domain).
\end{align*}
A probability measure $\mu$ on Borel subsets of $\domain$ is called an {\bf invariant probability measure} for the semigroup $\PP_t$ if for every $f\in \B_b(\domain)$
\begin{align*}
\int_{\domain}\PP_t f(\xbf)\mu(\d\xbf)=\int_\domain f(\xbf)\mu(\d\xbf).
\end{align*}

We next turn to the issue of large-time properties of equation~\eqref{eqn:xyz}. For a measurable function $\Psi:\domain\to(0,\infty)$, we introduce the following weighted supremum norm
\begin{align*}
\|\f\|_\Psi:=\sup_{\xbf\in\domain}\frac{|\f(\xbf)|}{1+\Psi(\xbf)}.
\end{align*}
With this norm, we can now state the main result of the paper:
\begin{theorem} \label{thm:geometric.ergodicity}
Under the hypotheses of Proposition~\ref{prop:ne}, we have the following results:
\begin{itemize}
\item[\emph{(a)}]  $\PP_t$ admits a unique invariant probability measure $\pi$.
\item[\emph{(b)}]  There exist a function $\Psi\in C(\domain; [1, \infty))$ and constants $C_*>0$, $\rho_*\in(0,1)$ such that the following estimate holds
\begin{align*}
\|\PP_tf-\pi(f)\|_{\Psi}\leq C_* \rho_*^{t}\|f-\pi(f)\|_{\Psi},
\end{align*}  
for all times $t\geq 0$ and for every measurable $f:\domain\rightarrow \rbb$ with $\|f\|_{\Psi}<\infty$.  In the above, $\pi(f):= \int_\domain f \, \emph{d}\pi$.  
\item[\emph{(c)}]  Letting $r(x,y) =\sqrt{x^2+y^2}$, the $(x,y)$-marginal of $\pi$, denoted by $\bar{\pi}$, satisfies 
\begin{align} \label{ineq:int.r^(p+1).pi(dx)}
\int_{\RR^2}r(x,y)^\lambda \, \bar{\pi}( \emph{d}x, \emph{d}y) < \infty,
\end{align}
for all $0<\lambda<\tfrac{2}{h}$. 
\end{itemize}
\end{theorem}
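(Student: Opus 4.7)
The plan is to apply the standard Harris/Meyn--Tweedie framework for geometric ergodicity of Markov semigroups, combining a Lyapunov drift condition with a minorization condition on sublevel sets of the Lyapunov function. Parts (a) and (b) will follow from these two ingredients simultaneously, while part (c) will follow by choosing the Lyapunov function $\Psi$ to grow polynomially in $r(x,y)$ with an exponent approaching the conjectured sharp value $2/h$.

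First, I would invoke the Lyapunov construction of Section~\ref{sec:Lyapunov} to produce a continuous, proper function $\Psi:\domain\to[1,\infty)$ (meaning $\Psi(\xbf)\to\infty$ as $z\to 0$ or as $r(x,y)\to\infty$) satisfying, for some $T>0$, $\gamma\in(0,1)$ and $K>0$, the discrete-time drift
\begin{equation*}
\PP_T\Psi(\xbf)\leq \gamma\,\Psi(\xbf)+K,\qquad \xbf\in\domain.
\end{equation*}
Since the reflective term $-\d k_t$ acts only at $\{z=1\}$ and pushes $z$ downward, any application of It\^{o}'s formula to $\Psi$ produces an extra boundary contribution proportional to $\pa_z\Psi\, \d k_t$; I would therefore arrange $\pa_z\Psi\leq 0$ in a neighborhood of $\{z=1\}$ so that this contribution is favorable, and then estimate $\mathcal{L}\Psi$ on the interior using the explicit form of~\eqref{eqn:xyz}. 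The radial sharpness would be obtained by tailoring $\Psi$ so that $\Psi(\xbf)\geq c_\lambda\, r(x,y)^\lambda$ for any prescribed $\lambda<2/h$, at the cost of adjusting $\gamma$, $T$ and $K$.

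Second, I would apply the minorization result of Section~\ref{sec:minorization}: for every $R>0$ the sublevel set $\{\Psi\leq R\}$ is a compact subset of $\domain$ (bounded in $(x,y)$ and bounded away from both $z=0$ and $z=1$), on which weak H\"ormander's condition on the interior combined with the support theorem tailored to~\eqref{eqn:xyz} delivers a uniform lower bound $\PP_T(\xbf,\,\cdot\,)\geq \eta\,\nu(\,\cdot\,)$ for some probability measure $\nu$ and some $\eta>0$. Together with the drift condition, this is precisely the input of Harris' theorem in the form used by Meyn--Tweedie (and refined by Hairer--Mattingly), which yields the unique invariant measure $\pi$ of part (a) and the geometric contraction in the $\|\cdot\|_\Psi$ norm of part (b), with explicit $C_*$ and $\rho_*\in(0,1)$. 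For part (c), invariance of $\pi$ together with the drift gives $\pi(\Psi)\leq \pi(\PP_T\Psi)\leq \gamma\,\pi(\Psi)+K$, whence $\pi(\Psi)\leq K/(1-\gamma)<\infty$ after a standard truncation through the stopping times $\tau_R$ in~\eqref{eqn:tau_R} to justify the a priori finiteness of the integral. Since $\Psi(\xbf)\geq c_\lambda\, r(x,y)^\lambda$ for every $\lambda<2/h$, inequality~\eqref{ineq:int.r^(p+1).pi(dx)} follows.

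The main obstacle is the Lyapunov construction of Section~\ref{sec:Lyapunov} itself. As noted in the remark following Proposition~\ref{prop:ne}, in the noiseless regime the dynamics blows up in finite time from a large part of phase space, so $\Psi$ must exploit the transverse noise in $y$ and the reflective boundary at $\{z=1\}$ to dominate the destabilizing term $-hx^2/z$. Moreover, tuning $\Psi$ so as to capture the sharp exponent $2/h$ rather than a strictly smaller one requires a delicate radial ansatz: verifying the generator inequality on the full domain, and in particular in the dangerous region $\{x<0,\,|x|\gg 1,\,z\ll 1\}$, is the core technical work, and it is exactly there that the condition $|\kappa_1/\kappa_2-1|<c_*$ from Proposition~\ref{prop:ne} is expected to enter.
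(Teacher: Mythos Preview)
Your plan for parts (a) and (b) is essentially the paper's: Proposition~\ref{prop:Lyapunov:xyz} gives the drift, Proposition~\ref{prop:minorization} the small-set minorization, and Harris' theorem in the form of~\cite{hairer2011yet} closes. Two small slips: the favorable boundary condition is $\Q\Psi=-\partial_z\Psi\le 0$, i.e.\ $\partial_z\Psi\ge 0$ near $\{z=1\}$ (the reflection contributes $-\partial_z\Psi\,\d k_t$); and the sublevel sets of $\Psi$ are not bounded away from $\{z=1\}$, which is why Section~\ref{sec:minorization} needs a support theorem for the reflected equation and not just interior hypoellipticity.

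For part~(c) there is a genuine gap. The Lyapunov function built in Section~\ref{sec:Lyapunov} does \emph{not} satisfy $\Psi\ge c_\lambda r(x,y)^\lambda$ for $\lambda$ near $2/h$. The constraints in Choice~\ref{cond:pq} force $q_1=0$ and $\beta_1=\alpha_h p_1<1$, hence $p_1<3/(1+2h)$, which for small $h$ is about $3$ and far below $2/h$. The restriction $q_1=0$ is not cosmetic: it is what makes $\f_1$ a function of $(u,v)$ alone, which the inner-region averaging construction of Section~\ref{sec:inner-region} requires. So your inequality $\pi(\Psi)\le K/(1-\gamma)$ only yields $r^{p_1}\in L^1(\pi)$ with $p_1<3/(1+2h)$.

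To reach the sharp range the paper runs a two-step bootstrap that your outline misses. First, the Lyapunov estimate~\eqref{ineq:E.Phi:xyz} plus Birkhoff's theorem gives $z^{-2/3}\in L^1(\pi)$. Second, a \emph{separate} outer function $\widetilde\Psi$ is built with new parameters $(p_3,q_3)$ satisfying $q_3-\tfrac13 p_3=1$ (see~\eqref{cond:q3-p_3/3=1}); this pushes $p_3$ up to $2/h-1$, but now $\widetilde\Psi\sim r^{p_3}z$ vanishes as $z\to 0$, so it is not coercive and cannot satisfy a geometric drift by itself. Its Dynkin inequality carries an unbounded remainder $c_5\int_0^t z_s^{-2/3}\,\d s$, which is only controlled via the first step. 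The sharp radial exponent and coercivity in $z$ cannot be packed into one function obeying $\PP_T\Psi\le\gamma\Psi+K$; without this bootstrap your argument for~(c) stalls at $\lambda<3/(1+2h)$.
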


\begin{remark} \label{rem:pi}

We remark that the bound $0<\lambda<\tfrac{2}{h}$ in Theorem~\ref{thm:geometric.ergodicity} (c) is conjectured to be optimal. This stems from computer simulations supported by heuristics that under the particular instance $\kappa_1/\kappa_2 = 1+ 2h$ with $h\in(0,1)$ as in~\eqref{eqn:XYZ:no.reflection}, the $x$-marginal $\tilde{\pi}$ of $\pi$ is numerically shown to exhibit relation \eqref{eqn:tpi} \cite{bec2007clustering}. Translating~\eqref{ineq:int.r^(p+1).pi(dx)} to the physical model~\eqref{eqn:XYZ:no.reflection}, if~\eqref{ineq:int.r^(p+1).pi(dx)} holds true for some $\lambda >0$, then it should be the case that $\lambda < \tfrac{2}{h}$. As mentioned in the introduction, this scaling also agrees with the established scaling \eqref{eqn:asyxy} in the case of smooth flows~\cite{gawedzki2011ergodic}, i.e., when $h=1$.

\end{remark}

\begin{remark}
As mentioned in the introduction, ergodic properties of reflective SDEs with non-degenerate noise have been established in a number of settings~\cite{budhiraja1999simple,cattiaux1992stochastic,
cattiaux2017hitting,cattiaux2013poincare,
dieker2013positive,
dupuis1994lyapunov}. On the other hand, to the best of the authors' knowledge, analogous results for reflective SDEs with degenerate noise appear to be rarer. See, however, the works \cite{dieker2013positive,lipshutz2021sensitivity}, which treat cases of reflective SDEs with degenerate noise.  While \cite{lipshutz2021sensitivity} treats reflected Brownian motion in a convex polyhedral cone,
reference \cite{dieker2013positive} discusses ``piecewise" Ornstein-Uhlenbeck processes using quadratic Lyapunov structure. For the system~\eqref{eqn:xyz}, there is no obvious starting guess for a global Lyapunov function  
because of the underlying instability discussed in Remark \ref{rem:wel-posed}. The stability in our system is dictated by a complicated dynamical structure which combines fast exits from the unstable region with subsequent transportations to the stable region, from which the Markov 
process returns to the ``center" of space.
\end{remark}

The proof of Proposition~\ref{prop:ne} and Theorem~\ref{thm:geometric.ergodicity} (a)-(b) will follow by establishing the existence of an appropriate Lyapunov functional $\Psi$ as in~Proposition~\ref{prop:Lyapunov:xyz} below, showing that the dynamics makes frequent returns to the \emph{center} of space, and by proving that initial conditions in this center minorize, Proposition~\ref{prop:minorization} below, allowing one to couple a solution pair once both have arrived in the center~\cite{hairer2011yet, mattingly2002ergodicity, meyn1992stochastic}.

\begin{proposition} \label{prop:Lyapunov:xyz}
Under the same hypothesis as in Proposition~\ref{prop:ne}, there exists a function $\Psi\in C(\domain; [1,\infty))$, which we call throughout a \textbf{Lyapunov function}, satisfying the following properties:
\begin{itemize}
\item[\emph{(a)}]  For $R>0$, let 
\begin{align*}\Psi_R= \inf_{ \mathcal{O}\setminus\domain_R} \Psi(x,y,z),
\end{align*}
where $\domain_R$ is given as in~\eqref{eqn:OR}. Then $\Psi_R\rightarrow \infty$ as $R\rightarrow \infty$.  
\item[\emph{(b)}]  For every $\xbf \in \domain$ and every $t\geq 0$ there exists a constant $c(x,t)>0$ such that 
\begin{align*}
\E_\xbf \Psi(\xbf_{t\wedge \tau_R}) \leq c(x,t),
\end{align*}
for every $R>0$.  
Consequently, $\xbf_t$ is non-explosive and the Markov semigroup $\PP_t$ is defined.  
\item[\emph{(c)}]There exist constants $\varepsilon_*\in(0,1),\, D_*> 0$ such that for all $\xbf\in \domain$,
\begin{align*}
\PP_t\Psi(\xbf)\le \varepsilon_*^t \Psi(\xbf)+D_*.
\end{align*}
\end{itemize}

 \end{proposition}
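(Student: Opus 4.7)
The plan is to build $\Psi$ as a smoothly patched function on $\domain$ tailored to the three distinct instability mechanisms of the drift in~\eqref{eqn:xyz}: unbounded planar speed $r=\sqrt{x^2+y^2}\to\infty$, depletion of the third coordinate as $z\to 0^+$, and the deterministic flow focusing onto the negative $x$-axis through the quadratic drift $-hx^2/z$. Passing to polar coordinates $x=-r\cos\phi$, $y=r\sin\phi$ shifts the unstable direction to $\phi=0$, so the entire instability is concentrated in a narrow cone $\{|\phi|\le\phi_*,\, z\le z_*\}$ at large $r$. A direct computation yields, for the interior generator $\L$ of~\eqref{eqn:xyz},
\begin{align*}
\L(r^\lambda)=-\gamma\lambda r^\lambda-\lambda h\,\frac{x}{z}r^\lambda+O(r^{\lambda-2}),\qquad \L(z^{-\alpha})=-\alpha(1-h)\,x\,z^{-\alpha-1},
\end{align*}
so the product $r^\lambda z^{-\alpha}$ inherits a singular cross term proportional to $-[\lambda h+\alpha(1-h)](x/z)\,r^\lambda z^{-\alpha}$, positive exactly on the bad set $\{x<0\}$. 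Balancing this against the noise and the linear damping forces the algebraic constraint $\lambda<2/h$, identifying the source of the sharp exponent appearing in Theorem~\ref{thm:geometric.ergodicity}(c).

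To remove the cross term I would exploit the angular drift of $\phi$, which at leading order equals $(r/z)\sin\phi$ and so repels $\phi$ from $0$, albeit with a diffusion rate of only $O(r^{-2})$. Following a propagation-type construction in the spirit of~\cite{herzog2015noise}, the candidate inside the bad cone is
\begin{align*}
\Psi(x,y,z)=1+r^\lambda\chi(\phi)\psi(z)+\Psi_{\mathrm{bg}}(x,y,z),
\end{align*}
where $\chi(\phi)$ is an even function peaked at $\phi=0$ whose decay is chosen so that the transport term $(r/z)\sin\phi\,\chi'(\phi)$ converts the focusing drift into a contribution of order $-\Psi/z$; the profile $\psi(z)$ is taken as a supersolution of the $z$-ODE balance $\lambda h\psi=(1-h)z\psi'$, with the threshold $\lambda<2/h$ arising exactly from requiring $\psi$ to remain controlled as $z\to 0^+$. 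The term $\Psi_{\mathrm{bg}}$ is a background energy of the form $r^{\lambda'}+z^{-\alpha'}$ with possibly smaller exponents used to ensure coercivity away from the cone; it is glued to the cone piece via a smooth partition of unity, after which one checks that the cross terms in $\L\Psi$ remain subleading. The reflective boundary at $\{z=1\}$ is accommodated by arranging $\pa_z\Psi\ge 0$ there, for instance by flattening $\psi$ or attaching an increasing boundary layer, so that the It\^o--Tanaka contribution $\int_0^{t\wedge\tau_R}(-\pa_z\Psi)(\xbf_s)\,\d k_s$ is non-positive.

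Once the interior drift inequality $\L\Psi\le -c\Psi+C$ is in hand, the three claims follow routinely. Coercivity (a) is built into the construction since $\Psi\to\infty$ whenever $r\to\infty$ or $z\to 0^+$. For (b), combining this interior bound with the non-positive boundary term and Gr\"onwall's lemma yields the $R$-uniform estimate $\E_{\xbf}\Psi(\xbf_{t\wedge\tau_R})\le c(\xbf,t)$, from which nonexplosion follows via Fatou and (a). Property (c) is then the standard Foster--Lyapunov exponentiation of the drift inequality. The principal difficulty, and what I expect to be the main obstacle, is that the singularities at $x\to-\infty$ and $z\to 0^+$ are coupled through the focusing term $-hx^2/z$ and cannot be dominated by independent energies: they must be absorbed into a single expression whose balance is sharp at $\lambda<2/h$. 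Attaining this sharp exponent simultaneously with the angular control and the boundary compatibility at $\{z=1\}$ is the delicate core of the argument.
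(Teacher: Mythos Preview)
Your proposal handles the large-radius regime but misses the mechanism that the paper identifies as the heart of the construction: the \emph{inner region} $\RI=\{r\le 2r_*,\ z<\varepsilon_0\}$ where $r$ is bounded and $z\to 0^+$. In that region your angular argument is ineffective, since the transport term $(r/z)\sin\phi\,\chi'(\phi)$ is not large relative to the bad term when $r$ is bounded, and your background piece $z^{-\alpha'}$ satisfies $\L(z^{-\alpha'})=-\alpha'(1-h)x\,z^{-\alpha'-1}$, which changes sign with $x$ and has nothing to absorb it. The paper resolves this by changing to $(u,v,z)=(xz^{-1/3},yz^{-1/3},z)$, under which the $(u,v)$ dynamics decouples (after a time change) from $z$ near $z=0$, and then building an \emph{averaging Lyapunov function} $\psi(u,v)$ with $\A\psi\le \alpha_h J\,u-\epsilon$; combining $D\psi$ with $-\tfrac{D\alpha_h J}{1-h}\log z$ cancels the sign-indefinite $u/z^{2/3}$ term and leaves a genuine dissipation $-\epsilon/z^{2/3}$ for bounded $r$. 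This averaging step is exactly where the hypothesis $|\kappa_1/\kappa_2-1|<c_*$ of Proposition~\ref{prop:ne} enters; your proposal never invokes it, which is a symptom that this regime is missing.

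Two further points. First, the paper's outer construction is not a polar-angle cutoff but a four-region propagation $\R_0\to\R_1\to\R_2\to\R_3$ solved by characteristics, with an explicit diffusive correction in $\R_3$ via the Laplace transform of an OU exit time; this is needed because near the negative $u$-axis the noise in $v$ is of the same order as the drift (the threshold $|u|^{1/2}|v|\sim\eta_*$), so a purely first-order angular balance does not close. Second, the sharp exponent $\lambda<2/h$ in Theorem~\ref{thm:geometric.ergodicity}(c) is not read off from the primary Lyapunov function but obtained by a bootstrap: once $z^{-2/3}\in L^1(\pi)$, a second outer function with parameters satisfying $q_3-\tfrac{1}{3}p_3=1$ yields the moment bound, and the constraint $0<\alpha_h p_3-(1-h)q_3<1$ then forces $p_3+1<2/h$.
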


Following~\cite{hairer2011yet}, we obtain Theorem~\ref{thm:geometric.ergodicity} parts (a) and (b) by combining the previous result with:

\begin{proposition}[Minorization]\label{prop:minorization} For $R$ sufficiently large, there exists a time $t_*>0$, a constant $c>0$, and a probability measure $\nu$ such that $\nu(\domain_R)=1$ and such that for every $\xbf\in \domain_R$ and any Borel set $A\subset \domain$
\begin{align*}
\PP_{t_*}(\xbf,A)\geq c\nu(A).
\end{align*}
\end{proposition}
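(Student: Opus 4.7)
The plan is to obtain the minorization in the standard way: produce a fixed reference point $\xbf_*$ in the interior of $\domain_R$, a time $t_*>0$, and an open neighborhood $U\subset \domain_R$ of $\xbf_*$ such that the transition density $p_{t_*}(\xbf,\xbf')$ of $\PP_{t_*}(\xbf,\mathrm{d}\xbf')$ exists, is jointly continuous in $(\xbf,\xbf')\in\domain_R\times U$, and is strictly positive on this set. Once this is available, compactness of $\domain_R\times \overline{U}$ gives a uniform lower bound $c>0$, and we may take $\nu = \mathrm{Leb}|_U / \mathrm{Leb}(U)$, from which the minorization $\PP_{t_*}(\xbf,A) \geq c\,\nu(A)$ for all $\xbf\in\domain_R$ follows.

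First I would establish regularity of transition densities away from the boundary. On the interior $\domain^\circ=\rbb^2\times(0,1)$, the Skorokhod SDE coincides (prior to hitting $\{z=1\}$) with the It\^o SDE~\eqref{eqn:xyz} without the $\mathrm{d}k_t$ term. I would verify H\"ormander's bracket condition for the generator by computing the Lie brackets of the diffusion fields $V_1=\sqrt{2\kappa_1}\,\partial_x$, $V_2=\sqrt{2\kappa_2}\,\partial_y$ with the drift $V_0$. In particular, $[V_1,V_0]$ produces a nonzero $\partial_z$ component via the term $(1-h)x\,\partial_z$, while further brackets fill the remaining directions, so $\{V_1,V_2,[V_1,V_0],[V_2,V_0]\}$ span $\rbb^3$ at every point of $\domain^\circ$. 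Hence by H\"ormander's theorem the killed semigroup, obtained by stopping the process at the first time $z_t=1$, admits a smooth joint density on $\domain^\circ\times\domain^\circ$, and this density is dominated by (and hence continuous into) the true transition density.

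Next comes the support theorem / controllability step, which is the main obstacle. I would fix a target point $\xbf_*=(0,0,z_*)$ for some $z_*\in(1/R,1)$, and show that for any $\xbf\in\domain_R$ there exist smooth controls $u^1(\cdot),u^2(\cdot)$ on $[0,t_*]$ driving the controlled ODE
\begin{align*}
\dot x &= -\gamma x - \frac{h x^2-y^2}{z}+\sqrt{2\kappa_1}\,u^1,\\
\dot y &= -\gamma y - (1+h)\frac{xy}{z}+\sqrt{2\kappa_2}\,u^2,\\
\dot z &= (1-h)x,
\end{align*}
from $\xbf$ to $\xbf_*$ while keeping the trajectory inside $\domain^\circ$, bounded uniformly away from both $\{z=1\}$ and $\{z=0\}$. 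The delicate point is that $z$ is driven only by $x$ and cannot be manipulated directly; moreover $z$ must avoid $1$, so if the initial condition has $z$ close to $1$ we must first create a negative $x$ to push $z$ downward before stabilizing. A concrete recipe is to split $[0,t_*]$ into three subintervals: on the first, choose $u^1,u^2$ so that $x$ tracks a prescribed negative profile $x_*(t)$ that brings $z$ from its initial value to some $z_0 \in (1/(2R),z_*/2)$ (with $x_*$ large in magnitude when $\xbf$ is close to the boundary, but uniformly bounded on $\domain_R$); on the second, drive $x$ back to $0$ and $y$ to $0$ while $z$ passes through $z_*$; and on the third, fine-tune using the controls to land exactly at $\xbf_*$. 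Since the drift is locally Lipschitz off $\{z=0\}$ and the trajectory is bounded away from $\{z=0,1\}$ by construction, standard ODE continuity gives that the endpoint map is continuous in $\xbf$, and the uniform bound on the controls and the bound-away-from-the-boundary can both be made uniform in $\xbf\in\domain_R$.

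Finally I would combine the two ingredients via the Stroock--Varadhan support theorem. Because the constructed control trajectory stays a positive distance from $\{z=1\}$, the reflection term does not activate with positive probability on an open neighborhood of this path, so the law of $\xbf_{t_*}$ under $\P_\xbf$ dominates the law of the corresponding killed diffusion with controls in a small tube around $(u^1,u^2)$. By H\"ormander regularity and positivity of the killed density around the deterministic endpoint, this yields $p_{t_*}(\xbf,\cdot)>0$ on an open neighborhood $U$ of $\xbf_*$, continuously in $\xbf$. Compactness of $\domain_R$ then upgrades this to a uniform lower bound, yielding the asserted $\nu$ and $c$. The substantive work is therefore concentrated in the control construction and in justifying that the resulting tube of trajectories does not interact with the reflection; everything else is standard H\"ormander theory and a compactness argument.
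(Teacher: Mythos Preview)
Your overall architecture matches the paper's: H\"ormander hypoellipticity on the interior, a control/support argument to reach a fixed target $\xbf_*$, and a compactness step. The paper also splits into two cases and uses the Markov property at time $2T$: first drive any $\xbf\in\domain_R$ into a small interior ball $B(\xbf_*,\varepsilon_1)$ with uniformly positive probability (Lemma~\ref{lem:inf.P(x,C.epsilon)>0}), and then use the smooth interior density from there (Lemma~\ref{lem:inf.p(x,y)>0}).

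There is, however, a genuine gap in your control step. You propose to keep the controlled trajectory strictly inside $\domain^\circ$, away from $\{z=1\}$, so that reflection never activates. This cannot work for initial data $\xbf=(x,y,1)\in\domain_R$ with $x>0$: the $z$-equation $\dot z=(1-h)x$ has no control input, and since $x$ is continuous with $x(0)>0$, we have $\dot z(0)>0$ regardless of how aggressively you choose $u^1$. Your ODE without reflection therefore immediately takes $z$ above $1$, leaving $\domain$; you cannot ``first create a negative $x$'' because controls act only on $\dot x$, not on $x$ itself. Relatedly, your killed-process lower bound gives nothing for such initial points, and H\"ormander's theorem does not furnish continuity of the density in $\xbf$ up to the boundary $\{z=1\}$---the paper explicitly flags this limitation.

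The paper's fix is to \emph{allow} the control trajectory to interact with the boundary when $z$ is close to $1$. It builds a reflected control $(\xbftd_t,\ktil_t)$ solving the deterministic Skorokhod problem, with $\ktil_t$ given explicitly by~\eqref{eqn:control.z>3/4.ktilde}, and proves a support theorem for the reflected SDE (Lemma~\ref{lem:support.theorem}) using the monotonicity identity $\int_0^t(z_s-\ztil_s)(\d k_s-\d\ktil_s)\ge 0$ to close a Gronwall estimate. This is precisely the missing ingredient in your argument: you need a support statement that tolerates reflection rather than one that avoids it.
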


The proof of Proposition~\ref{prop:minorization} will be given in Section~\ref{sec:minorization}.  The construction of the Lyapunov function will be carried out in Section~\ref{sec:Lyapunov}, Section~\ref{sec:Lyapunov:proof} and Section~\ref{sec:flux}.  We will use facts deduced in these sections to establish Proposition~\ref{prop:Lyapunov:xyz} in Section~\ref{sec:proof-thm-ergodicity}. Combining these results, we will be able to conclude the convergence rate to equilibrium, Theorem~\ref{thm:geometric.ergodicity} (b), whose proof is classical \cite{hairer2011yet} and thus is omitted. The remaining assertion concerning the moments of the marginal $\bar{\pi}$, Theorem~\ref{thm:geometric.ergodicity} part (c), will essentially follow from our Lyapunov construction. All of this will be explained in detail in the proofs of Theorem~\ref{thm:geometric.ergodicity} and Proposition~\ref{prop:Lyapunov:xyz} in Section~\ref{sec:proof-thm-ergodicity}.

 \section{Heuristics} \label{sec:intuition}
 
 Before diving into the precise details of the main results stated in Section~\ref{sec:mainresult}, in this section we build some intuition about the system~\eqref{eqn:xyz} using heuristics.  Doing this will furthermore allow us to make key observations and discuss the main difficulties faced when proving these results.

We first make a convenient change of coordinates of equation~\eqref{eqn:xyz}.  Let  
 \begin{align}
 \label{eqn:cocoord}
 u = x z^{-1/3}, \,\,\, v = y z^{-1/3}, \,\,\, \text{ and } \,\,\, z=z,
 \end{align}
 and note that, in this coordinate system, equation~\eqref{eqn:xyz} transforms as
 \begin{equation}\label{eqn:uvz}
  \begin{aligned}
  \d u_t  &= -  \gamma u_t \, \d t  -\frac{\alpha_h u^{2}_t - v_t^{2}}{z^{2/3}_t} \, \d t +\frac{\sqrt{2\kappa_1}}{z^{1/3}_t} \, \d W^1_t+\frac{u _t}{3z_t}\, \d k_t, \\
 \d v_t &= -  \gamma v_t \, \d t -  \frac{(\alpha_h +1) u_t v_t}{z^{2/3}_t}\, \d t + \frac{\sqrt{2\kappa_2}}{z^{1/3}_t}\, \d W^2_t  +\frac{v_t}{3z_t}\, \d k_t,\\
 \d z_t&= (1-h)  u_tz^{1/3}_t \, \d t- \d k_t   ,
\end{aligned} 
 \end{equation}
where 
\begin{align}
\label{def:alpha}
\alpha_h = \tfrac{1}{3}+ \tfrac{2}{3}h.
\end{align}

The motivation behind this substitution is as follows.  If either $|(u,v)|$ is large or $z$ is small and we ignore boundary effects, then under a random time change the system~\eqref{eqn:uvz} asymptotically decouples the process $(u_t,v_t)$ from $z_t$.  To be more precise, note that in this new coordinate system in the absence of boundary effects, i.e., setting $k_t\equiv 0$ in~\eqref{eqn:uvz}, the generator has the form
\begin{align}
\nonumber \L&= - \gamma u \partial_{u} -\gamma v \partial_{v} - \frac{\alpha_h u^{2} -v^{2}}{z^{2/3}}\partial_{u} - (\alpha_h+1) \frac{u v}{z^{2/3}}\partial_{v} +(1-h) u z^{1/3} \partial_z+ \frac{\kappa_1}{z^{2/3}}\partial^2_{u} + \frac{\kappa_2}{z^{2/3}}\partial_{v}^2\\
 &=: z^{-2/3} \M  . \label{def:M}
\end{align}
Assuming that the terms $-\gamma z^{2/3} u \partial_u - \gamma v z^{2/3} \partial_v$ in the operator $\M$ are negligible when $|(u,v)|\gg 1$ or $z\approx 0$ (see Section \ref{sec:Lyapunov:large(u,v)} below), then for such values
\begin{align}
\label{eqn:Mapprox}
\M \approx - (\alpha_h u^2 -v^2) \partial_u - (\alpha_h + 1) uv \partial_v + \kappa_1 \partial_u^2 + \kappa_2 \partial_v^2 + (1-h) u z \partial_z=:\N .
\end{align}
Note that the $(u,v)$ dynamics defined by $\N$ moves independent of $z$, and that the corresponding $z$ dynamics depends on the $(u,v)$ dynamics in a relatively simple manner.  Moreover, the $(u,v)$ dynamics is similar to those studied previously in~\cite{athreya2012propagating,
birrell2012transition,gawedzki2011ergodic}.  Thus in order to understand the behavior of the system~\eqref{eqn:uvz} in the absence of boundary effects, one needs to understand this $(u,v)$ dynamics and how this determines the behavior of the $z$ process.    

Continuing with this informal discussion, we introduce the following SDE
\begin{equation} \label{eqn:UVZ}
\begin{aligned}
\d U_t &= -(\alpha_h U^2_t- V^2_t) \, \d t + \sqrt{2\kappa_1} \, \d W_t^1,\\
\d V_t &=- (\alpha_h+1)U_t V_t \, \d t + \sqrt{2\kappa_2} \, \d W_t^2 ,  \\
\d Z_t &= (1-h) U_t Z_t \, \d t ,
\end{aligned}
\end{equation}
and note that this process has generator $\N$ as in~\eqref{eqn:Mapprox}. Also, by setting 
\begin{align} \label{def:A}
 \A=  - (\alpha_h u^2 -v^2) \partial_u - (\alpha_h + 1) uv \partial_v + \kappa_1 \partial_u^2 + \kappa_2 \partial_v^2 ,\end{align}
we observe that $\A$ is the generator for the process $(U_t,V_t)$ which is decoupled from $Z_t$, and that the $Z$-process satisfies
$$Z_t=Z_0\exp\Big\{(1-h)\int_0^t U_s\, \d s\Big\}.$$
Now when $|(U_t,V_t)|$ is large, the system~\eqref{eqn:UVZ} is dominated by the dynamics along the generator $\A$.  However, when $Z_t$ is near zero, whether $Z_t$ decreases or increases on average is dictated by the sign of the time-average of the $U$-process. To keep the dynamics from hitting $Z=0$ in finite time, it is necessary that the process $(U_t,V_t,Z_t)$ should spend most of the time in the region where $U>0$ rather than where $U\leq 0$. Therefore, if we are hoping for well-posedness and ergodicity of the original process~\eqref{eqn:uvz}, i.e., for the $Z_t$ process to eventually increase, it should be the case that, for all $t>0$ large enough, 
$$\mu(U):=\int_{\rbb^2}\close U\,\mu(\d U,\d V )\approx\frac{1}{t}\int_0^t U_s\, \d s>0, \qquad t\gg 0,$$
where $\mu$ denotes the ergodic invariant probability measure for the generator $\A$.  Note that such a measure $\mu$ exists and is unique by the main result in~\cite{birrell2012transition}. 

To see intuitively why one should expect 
$\mu(U)> 0$, we provide some numerics in Figure~\ref{figure:1} indicating that this should be the
case for various values of the noise parameters $\kappa_1=1>0$, $\kappa_2=1+2h$ and H\"{o}lder exponent $h>0$. 
\begin{figure}[H]
\begin{tikzpicture}

            \node [inner sep=0pt,above right] 
                {\includegraphics[width=10cm]{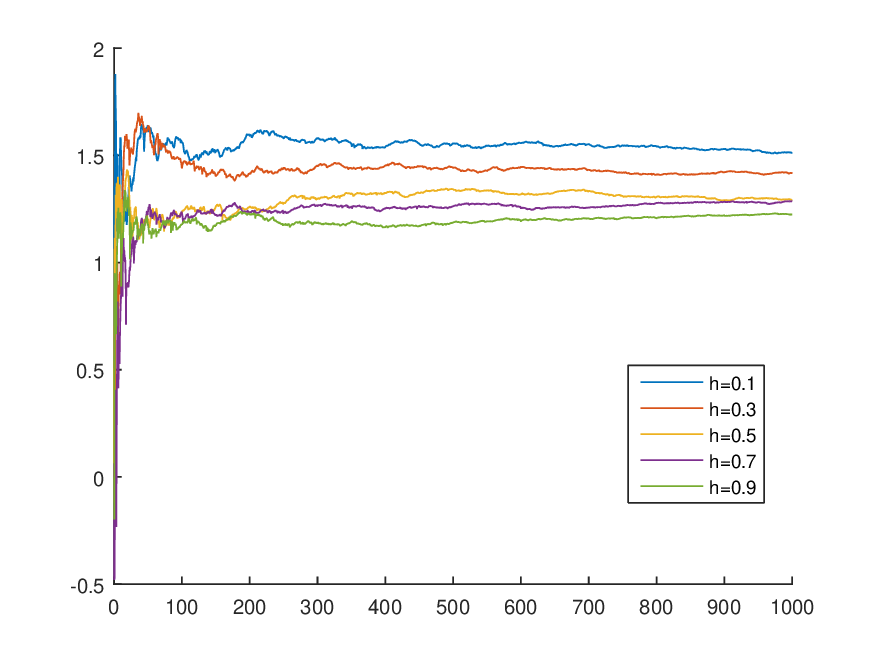}};
           \draw[->] (9.05,0.828)--(9.3,0.828) node[right]{$t$};
          \draw[->] (1.312,6.95)--(1.312,7.2)  node[above]{$\frac{1}{t}\int_0^tU_s\d s$} ;
        \end{tikzpicture}

\caption{Large-time average $\frac{1}{t}\int_0^tU_s\d s$ with initial condition $U_0=V_0=0$ and noise parameters $\kappa_1=1$, $\kappa_2=1+2h$.}
          \label{figure:1}
\end{figure}

Even though the numerics clearly indicate the positive sign of $\mu(U)$, establishing this
fact rigorously and analyzing its consequences are more involved. The intuitive reason for
this is that there is a region (which cannot be avoided) in which all of the terms in the operator $\A$ are on the same order. Thus, different from the construction of the Lyapunov function for large
radial values as in Section~\ref{sec:outer-region}, we cannot neglect any term in $\A$ to produce the right type of test function.
Additionally, this operator is sufficiently complicated in its total form so as to not yield obvious explicit solutions to these problems. Ultimately, under the assumption $\kappa_1\approx\kappa_2$, cf.~\eqref{cond:c_*}, we will be able to prove $\mu(U)>0$, thereby establishing ergodicity of the original process~\eqref{eqn:uvz} and our main results. All of the details will be carefully described in Section~\ref{sec:inner-region}. 

\section{Minorization} \label{sec:minorization}

Throughout the rest of the paper,  $c,D$ denote positive constants that might change from line to line. The important parameters that they depend on are indicated in parenthesis, e.g., $c(\kappa_1)$ depends on $\kappa_1$.

In this section, we prove Proposition~\ref{prop:minorization}. The argument giving this result will employ a series of lemmata below, whose proofs are deferred to the end of the section. First, letting $\mathring{\domain}$ denote the interior of $\domain$ and, offering a slight abuse of notation, lettting
\begin{align}
\label{eqn:Lxyz}
\L= - \gamma x \partial_x - \gamma y \partial_y - \frac{h x^2-y^2}{z}\partial_x -(1+h) \frac{xy}{z}\partial_y + (1-h) x \partial_z + \kappa_1 \partial_x^2 + \kappa_2 \partial_y^2,
\end{align}
we show that the differential operators $\partial_t \pm \L$, $\partial_t \pm \L^*$, $\L$ and $\L^*$, where $\L^*$ denotes the formal $L^2(\RR^3, \d x \d y \d z)$ adjoint, are all hypoelliptic on the respective domains $(0, \infty) \times\mathring{\domain}$, $(0,\infty) \times \mathring{\domain}$, $\mathring{\domain}$ and $\mathring{\domain}$.  Applying H\"{o}rmander's hypoellipticity theorem~\cite{hormander1967hypoelliptic}, this will allow us to determine the existence and smoothness of all probability densities of interest when restricted to the interior $\mathring{\domain}$ of the full domain $\domain$.  In particular, with what follows we will not be able to conclude the existence nor any regularity properties of the densities on the boundary $\{ z=1\}$.  One might be able to do so if the system were elliptic or strongly hypoelliptic \cite{cattiaux1992stochastic}, but the problem is made more difficult because the diffusion operator in the absence of reflection is weakly hypoelliptic (i.e., one has to make use of the drift vector field to generate all directions in the tangent space).  Regardless, however, we will not need such properties on the boundary to conclude minorization.

\begin{lemma} \label{lem:Hormander} For $t>0$ and $\xbf\in \mathring{\domain}$, the measure $\PP_t(\xbf, \, \cdot \,)$ when restricted to Borel subsets of $\mathring{\domain}$ is absolutely continuous with respect to Lebesgue measure on $\mathring{\domain}$.  We denote the density of this measure by $\ybf\mapsto p(t, \xbf, \ybf):\mathring{\domain}\rightarrow [0, \infty)$.  Moreover, the mapping $(t, \xbf, \ybf) \mapsto p(t, \xbf, \ybf): (0, \infty)\times \mathring{\domain}\times \mathring{\domain}\rightarrow [0, \infty)$ is $C^\infty$.    
\end{lemma}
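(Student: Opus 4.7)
The plan is to derive Lemma~\ref{lem:Hormander} from H\"ormander's hypoellipticity theorem applied to $\L$ on the interior $\mathring{\domain}$, where the reflective boundary term plays no role provided one localizes to test functions supported away from $\{z=1\}$. Writing the interior SDE~\eqref{eqn:xyz} in vector-field form, the drift is
\begin{align*}
V_0 = -\gamma x\,\partial_x - \gamma y\,\partial_y - \tfrac{hx^2-y^2}{z}\partial_x - (1+h)\tfrac{xy}{z}\partial_y + (1-h)x\,\partial_z,
\end{align*}
with diffusion vector fields $V_1=\sqrt{2\kappa_1}\,\partial_x$ and $V_2=\sqrt{2\kappa_2}\,\partial_y$. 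A direct computation gives
\begin{align*}
[V_0,V_1] = -\sqrt{2\kappa_1}\sum_i(\partial_x a_0^i)\,\partial_i = \sqrt{2\kappa_1}\!\left[\bigl(\gamma+\tfrac{2hx}{z}\bigr)\partial_x + (1+h)\tfrac{y}{z}\,\partial_y - (1-h)\,\partial_z\right],
\end{align*}
where the $\partial_z$-coefficient $-\sqrt{2\kappa_1}(1-h)$ is nonzero since $h\in(0,1)$. Combined with $V_1,V_2$, this shows that $\{V_1,V_2,[V_0,V_1]\}$ spans $\rbb^3$ at every point of $\mathring{\domain}$, verifying H\"ormander's bracket condition. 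H\"ormander's theorem then yields hypoellipticity of $\L$ and its formal adjoint $\L^*$ (which has the same principal part) on $\mathring{\domain}$, and of $\partial_t\pm\L$ and $\partial_t\pm\L^*$ on $(0,\infty)\times\mathring{\domain}$ (appending $\partial_t$ does not disturb the span).

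To translate hypoellipticity into smoothness of the transition kernel on the interior, fix $\varphi\in C_c^\infty(\mathring{\domain})$ with $\mathrm{supp}\,\varphi\subset\{z\leq 1-\delta\}$ for some $\delta>0$, and set $u(t,\xbf):=\PP_t\varphi(\xbf)$. Applying It\^o's formula to $\varphi(\xbf_{t\wedge\tau_R})$, the reflection contribution $-\int_0^{t\wedge\tau_R}\partial_z\varphi(\xbf_s)\,\d k_s$ vanishes identically, since $\d k_s$ is supported on $\{z_s=1\}$ while $\varphi\equiv 0$ on a neighborhood of that set. Taking expectations, sending $R\to\infty$, and testing against an arbitrary $\psi\in C_c^\infty((0,\infty)\times\mathring{\domain})$ via Fubini yields $(\partial_t-\L)u=0$ on $(0,\infty)\times\mathring{\domain}$ in the distributional sense. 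Hypoellipticity of $\partial_t-\L$ then gives $u\in C^\infty((0,\infty)\times\mathring{\domain})$, so that $\PP_t(\xbf,\cdot)|_{\mathring{\domain}}$ is represented by a kernel $p(t,\xbf,\ybf)$ that is smooth in the backward variables $(t,\xbf)$. The dual argument via Chapman--Kolmogorov and hypoellipticity of $\partial_t-\L^*$ in $(t,\ybf)$ provides smoothness in the forward variable $\ybf$, and a standard bootstrap combining the two distributional equations on $(0,\infty)\times\mathring{\domain}\times\mathring{\domain}$ delivers joint $C^\infty$ smoothness.

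The main obstacle I anticipate is the careful justification of the distributional Kolmogorov equation in the presence of the reflective boundary. This is precisely where the localization to test functions with compact support in $\mathring{\domain}$ is essential: because $\d k_t$ lives on $\{z_t=1\}$ and $\varphi$ vanishes on a neighborhood thereof, the It\^o expansion collapses to the interior operator $\L$, and classical H\"ormander theory applies without any modification for the reflecting geometry. Once this localization step is made rigorous (handled routinely by dominated convergence in the stopping time $\tau_R$, together with boundedness of $\varphi$), the remainder of the proof is the standard hypoelliptic density/regularity argument.
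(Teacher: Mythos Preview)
Your proof is correct and matches the paper's approach: both verify H\"ormander's bracket condition via the same commutator (the paper writes it as $[X_1,X_0]$) and observe that together with $V_1,V_2$ it spans $\rbb^3$ at every point of $\mathring{\domain}$. The paper simply invokes Corollary~7.2 of \cite{bellet2006ergodic} to pass from the bracket condition to density smoothness, whereas you spell out the localization argument showing the reflection term drops out on test functions supported in $\mathring{\domain}$; this is a welcome elaboration rather than a different route.
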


With the smoothness of the interior density $p(t,\xbf,\ybf)$ obtained in the previous lemma, we next show that for all times $T>0$ there exists a nonempty open subset of $\mathring{\domain}\times \mathring{\domain}$ on which $(\xbf, \ybf)\mapsto p(T,\xbf,\ybf)$ is uniformly positive. This crucial estimate will be employed in the proof of Proposition~\ref{prop:minorization} below. 

\begin{lemma} \label{lem:inf.p(x,y)>0}  Let $\xbf_*=(0,0,1/2)$ and $B(\ybf,\varepsilon)$ be the open ball in $\rbb^3$ centered at $\ybf$ with radius $\varepsilon$.  Then for any time $T>0$, there exist $\ybf_*\in \mathring{\domain}$ and $\varepsilon_1,\,\varepsilon_2>0$ such that  $B(\xbf_*,\varepsilon_1)\times B(\ybf_*,\varepsilon_2)\subseteq  \mathring{\domain}\times  \mathring{\domain}$ and 
\begin{align*}
\inf_{\xbf\in B(\xbf_*,\varepsilon_1),\ybf\in B(\ybf_*,\varepsilon_2)}\close\close p(T,\xbf,\ybf)>0.
\end{align*}
\end{lemma}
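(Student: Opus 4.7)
My plan is to combine the joint smoothness of the density afforded by Lemma~\ref{lem:Hormander} with a Stroock--Varadhan type support argument. The outline is: (i) produce a smooth deterministic trajectory of the associated controlled system from $\xbf_*$ to some interior target $\ybf_* \in \mathring{\domain}$ that never touches the boundary $\{z=1\}$; (ii) use a small-tube estimate to conclude that $\ybf_*$ lies in the topological support of $\PP_T(\xbf_*,\cdot)$; and (iii) conclude $p(T,\xbf_*,\ybf_*)>0$ via continuity, then upgrade to a uniform lower bound on a product of small balls via joint continuity of $p$ in $(\xbf,\ybf)$.

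For step (i), I would consider the control system obtained from~\eqref{eqn:xyz} by replacing $\sqrt{2\kappa_i}\,\d W^i_t$ with $\sqrt{2\kappa_i}\,u^i(t)\,\d t$ and dropping the reflection $\d k_t$. Because the controls act additively in the $x$ and $y$ equations, any smooth path $(x(t),y(t))$ with $(x(0),y(0))=(0,0)$ is realizable; the $z$ component is then forced by $z(t)=1/2+(1-h)\int_0^t x(s)\,\d s$. Choosing $x(t)$ to be a small smooth bump with a prescribed nonzero integral (and $y(t)$ any convenient smooth curve) lets us steer in time $T$ to any $\ybf_*=(x_*,y_*,z_*)\in\mathring{\domain}$ with, say, $z_*\in(1/3,2/3)$ and $|(x_*,y_*)|$ small; along this path $z(t)$ stays strictly between $1/3$ and $2/3$, and the corresponding controls $u^1,u^2$ are uniquely determined and smooth because the drift of~\eqref{eqn:xyz} is smooth on $\{z>0\}$.

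For step (ii), since the chosen control trajectory is bounded away from the boundary, a standard tube estimate for the \emph{unreflected} SDE (via the Stroock--Varadhan approach or equivalently Girsanov plus Borel--Cantelli) gives $\P_{\xbf_*}(\sup_{t\le T}|\xbf_t-(x(t),y(t),z(t))|<\delta)>0$ for every $\delta>0$. Taking $\delta$ small enough that the $\delta$-tube around the trajectory remains in $\mathring{\domain}$ and stays away from $\{z=1\}$, the reflected process~\eqref{eqn:xyz} started at $\xbf_*$ coincides pathwise with the unreflected diffusion on the event in the tube, by~\eqref{cond:k_t} and pathwise uniqueness; consequently the tube estimate transfers to $\PP_T$, and $\ybf_*$ belongs to the support of $\PP_T(\xbf_*,\cdot)$. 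For step (iii), $\ybf\mapsto p(T,\xbf_*,\ybf)$ is continuous on $\mathring{\domain}$ by Lemma~\ref{lem:Hormander}; if it vanished on some ball about $\ybf_*$, that ball would carry zero $\PP_T(\xbf_*,\cdot)$-mass, contradicting $\ybf_*\in\mathrm{supp}\,\PP_T(\xbf_*,\cdot)$. So, after possibly replacing $\ybf_*$ by a nearby interior point, $p(T,\xbf_*,\ybf_*)>0$, and joint continuity of $p$ on $(0,\infty)\times\mathring{\domain}\times\mathring{\domain}$ produces balls $B(\xbf_*,\varepsilon_1)$ and $B(\ybf_*,\varepsilon_2)$ inside $\mathring{\domain}\times\mathring{\domain}$ on which $p(T,\cdot,\cdot)$ is uniformly bounded below.

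The main obstacle is justifying the tube/support estimate in the presence of the Skorokhod reflection, since standard Stroock--Varadhan theorems are stated for unreflected diffusions. My approach finesses this by arranging the entire deterministic trajectory to live strictly inside $\mathring{\domain}$; then a sufficiently thin tube around it also stays in the interior, the reflection term is inert on that event, and pathwise continuity of the It\^{o} map for the unreflected SDE applies directly. The only other routine point is smoothness of the controls on $[0,T]$, which holds automatically because the construction keeps $z(t)$ bounded below.
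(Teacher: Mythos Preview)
Your proposal is correct and follows essentially the same strategy as the paper: a control/support argument shows $\PP_T(\xbf_*,\cdot)$ charges an interior ball, and then joint continuity of $p(T,\cdot,\cdot)$ from Lemma~\ref{lem:Hormander} yields the uniform lower bound. The paper streamlines your step~(i) by taking the trivial zero control $U^1\equiv U^2\equiv 0$, so the deterministic trajectory is constant at $\xbf_*$ and the target $\ybf_*$ is just any point of $B(\xbf_*,\varepsilon)$ at which the density happens to be positive; your more elaborate trajectory to a prescribed $\ybf_*$ works but is unnecessary. For step~(ii), the paper proves a dedicated support result (Lemma~\ref{lem:support.theorem}) for the reflected system via localization of the drift and a Gronwall comparison that exploits the sign of the cross term $\int (z_s-\tilde z_s)(\d k_s-\d\tilde k_s)\ge 0$; this handles even trajectories that hit $\{z=1\}$, which is needed later in Lemma~\ref{lem:inf.P(x,C.epsilon)>0}. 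Your shortcut of keeping the tube strictly interior so that reflection is inert is perfectly valid for the present lemma, but note that the localization you allude to is still needed since the drift is unbounded on $\mathring{\domain}$.
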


Having obtained the crucial bound above, in order to finish showing Proposition~\ref{prop:minorization}, we also need to show that the probability of starting the process from arbitrary $\xbf\in \domain_R$ and returning to $B(\xbf_*,\varepsilon_1)$ is uniformly bounded from below. Together with Lemma~\ref{lem:inf.p(x,y)>0}, the result of Lemma~\ref{lem:inf.P(x,C.epsilon)>0} will be employed to prove Proposition~\ref{prop:minorization}. 
\begin{lemma}\label{lem:inf.P(x,C.epsilon)>0} Let $\domain_R$ be the cylinder as in~\eqref{eqn:OR}. For all $R>0$ sufficiently large, there exists a time $T>0$ such that 
\begin{align}\label{ineq:inf.x.in.C_R}
\inf_{\xbf\in\domain_R}\PP_T\big(\xbf, B(\xbf_*,\varepsilon_1)\big)>0,
\end{align} 
where $\xbf_*,\,\varepsilon_1$ are as in Lemma~\ref{lem:inf.p(x,y)>0}.
\end{lemma}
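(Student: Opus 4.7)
The plan is to combine a Stroock-Varadhan-type support theorem for reflective SDEs with an explicit controllability construction and then to exploit compactness of $\domain_R$ to extract a uniform positive lower bound. Fix $R$ large enough that $\xbf_* \in \domain_R$. By a support theorem adapted to the Skorokhod equation \eqref{eqn:xyz}, the topological support of the law of $\xbf_T$ starting from $\xbf_0 \in \mathring{\domain}$, when restricted to $\mathring{\domain}$, contains the endpoints of all controlled trajectories
\begin{align*}
\dot{\tilde{x}} &= -\gamma \tilde{x} - \frac{h\tilde{x}^2 - \tilde{y}^2}{\tilde{z}} + \sqrt{2\kappa_1}\,\dot{w}^1, \\
\dot{\tilde{y}} &= -\gamma \tilde{y} - (1+h)\frac{\tilde{x}\tilde{y}}{\tilde{z}} + \sqrt{2\kappa_2}\,\dot{w}^2, \\
\dot{\tilde{z}} &= (1-h)\tilde{x} - \dot{\tilde{k}},
\end{align*}
where $(w^1, w^2)$ are smooth controls and $\tilde{k}$ is the reflection term at $\{\tilde{z} = 1\}$. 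Moreover, for any such control trajectory ending at $\xbf_*$, a tube estimate should yield a positive-probability Wiener neighborhood of the associated noise path forcing the stochastic trajectory to end in $B(\xbf_*, \varepsilon_1)$.

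Next, for each $\xbf_0 = (x_0, y_0, z_0) \in \domain_R$, I would construct a piecewise smooth control steering the deterministic system from $\xbf_0$ to $\xbf_*$ in a time $T = T(R)$ independent of $\xbf_0$. A natural three-phase strategy works: (i) apply a sufficiently large positive $\dot{w}^1$ so that $\tilde{x}$ becomes and stays positive, forcing $\tilde{z}$ to grow monotonically until it hits the reflecting boundary $\{\tilde{z} = 1\}$; (ii) maintain $\tilde{x} > 0$ at the boundary so that $\tilde{z} \equiv 1$ via reflection, a stage whose purpose is to decouple the subsequent evolution from $z_0$; (iii) switch $\tilde{x}$ to a chosen negative value so $\tilde{z}$ decays toward $1/2$, and then use $(\dot{w}^1, \dot{w}^2)$ to drive $(\tilde{x}, \tilde{y})$ to $(0,0)$ over a short final interval during which $\tilde{z}$ stays essentially at $1/2$ because $\dot{\tilde{z}} = (1-h)\tilde{x}$ is then small. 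The bounds $z_0 \geq 1/R$ and $|(x_0, y_0)| \leq R$ on the initial data, together with a time-rescaling of the control, allow $T$ and the $L^\infty$-magnitudes of $(w^1, w^2)$ to be chosen uniformly in $\xbf_0 \in \domain_R$.

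By continuity of the Skorokhod map with respect to initial data and driving signal, for each $\xbf_0 \in \domain_R$ there is an open neighborhood $U_{\xbf_0}$ on which any control in a small $L^\infty$-tube around the chosen one produces a trajectory ending in $B(\xbf_*, \varepsilon_1)$. Combining this with the tube estimate from step 1 yields $\PP_T(\xbf, B(\xbf_*, \varepsilon_1)) \geq c_{\xbf_0} > 0$ uniformly for $\xbf \in U_{\xbf_0}$. Since $\domain_R$ is compact, it is covered by finitely many such neighborhoods $\{U_{\xbf_{0,i}}\}_{i=1}^N$, and taking $c = \min_i c_{\xbf_{0,i}} > 0$ gives \eqref{ineq:inf.x.in.C_R}.

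The main obstacle is the reflective version of the support theorem: the classical Stroock-Varadhan result is stated for unreflected It\^{o} diffusions, so one must carefully track how small perturbations of the driving Brownian path propagate through the Skorokhod map, particularly near times when the trajectory is reflecting. Because the reflection direction here is the constant vector field $-\partial_z$ at the smooth boundary $\{z = 1\}$, continuity of the Skorokhod map is classical, and a Wong-Zakai-type polygonal approximation transfers the unreflected support theorem to the reflective setting in a relatively clean manner. Granted this technical ingredient, the phase-by-phase controllability construction and the compactness argument are essentially elementary.
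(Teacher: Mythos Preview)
Your proposal is correct in its overall architecture---support theorem for the reflective SDE, explicit controllability, compactness---and this is precisely the skeleton of the paper's proof. The ingredients match: the paper proves its support theorem (Lemma~4.5) by a direct Gronwall argument after truncation, exploiting the crucial observation that the reflection cross-term satisfies $\int_0^t (z_s-\tilde z_s)(\d k_s-\d\tilde k_s)\ge 0$, which is exactly the ``continuity of the Skorokhod map'' you invoke. Your Wong--Zakai framing is more indirect than necessary here; the Gronwall route is shorter and uses the monotone structure of the half-line reflection directly.

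Where the two arguments genuinely diverge is in the controllability construction. You propose a single three-phase trajectory that always drives $\tilde z$ up to the reflecting boundary $\{z=1\}$, lingers there, then descends to $1/2$. The paper instead splits $\domain_R$ into $\{z\le 3/4\}$ and $\{z\ge 3/4\}$. On the first piece the controlled trajectory is chosen to stay strictly in $\mathring\domain$ (no reflection at all), which lets the paper invoke continuity of $\xbf\mapsto \PP_T(\xbf,B(\xbf_*,\varepsilon_1))$ coming from the smooth interior density (Lemma~4.1) and conclude by a single infimum over a compact set---no covering needed. Only on $\{z\ge 3/4\}$ does the paper build a reflecting control, using the explicit half-line Skorokhod formula for $\tilde k_t$ to verify $\tilde z_T=1/2$, and then runs the finite-cover compactness argument you describe. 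Your approach trades the density-continuity shortcut for a uniform reflecting control; this is perfectly valid but requires you to check carefully that your phase (ii)--(iii) transition produces $\tilde z_T=1/2$ independently of $z_0$, which is the delicate computation the paper carries out in its Case~2.
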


The arguments giving the above lemmata are deferred to the end of this section. Having laid out the necessary ingredients, we are now in a position to prove Proposition~\ref{prop:minorization}. The proof follows along the lines of the proof of~\cite[Lemma 2.3]{mattingly2002ergodicity}.  See also \cite{mattingly2012geometric}. Since the proof is short, we include it here for the sake of completeness.
\begin{proof}[Proof of Proposition~\ref{prop:minorization}] Let $\xbf_*,\, \ybf_*$ and $B(\xbf_*,\varepsilon_1), B(\ybf_*,\varepsilon_2)$ be as in Lemma~\ref{lem:inf.P(x,C.epsilon)>0} and $p(T,\xbf,\ybf)$ be the restricted transition density as in Lemma~\ref{lem:inf.p(x,y)>0}.  Define the following probability measure $\nu$ on Borel subsets of $\domain$ by 
\begin{align*}
\nu(A)=\frac{|A\cap B(\ybf_*,\varepsilon_2)|}{|B(\ybf_*,\varepsilon_2)|} , 
\end{align*}
where $|\cdot|$ denotes Lebesgue measure on $\RR^3$.  
Consider $\xbf\in \domain_R$ where $\domain_R$ is the compact set as in Lemma~\ref{lem:inf.P(x,C.epsilon)>0}.  By the Markov property, Lemma~\ref{lem:inf.p(x,y)>0} and Lemma~\ref{lem:inf.P(x,C.epsilon)>0}, we have \begin{align*}
\PP_{2T}(\xbf,A)&= \int_{A}\int_{\domain}\PP_T(\xbf,d\ybf)\PP_T(\ybf,d\zbf)\\
&\geq \int_{A\cap B(\ybf_*,\varepsilon_2)}\int_{B(\xbf_*,\varepsilon_1)}\close\close\PP_T(\xbf,d\ybf)\PP_T(\ybf,d\zbf)\\
&= \int_{A\cap B(\ybf_*,\varepsilon_2)}\int_{B(\xbf_*,\varepsilon_1)}\close\close\PP_T(\xbf,d\ybf)p(T,\ybf,\zbf)d\zbf\\
&\geq |A\cap B(\ybf_*,\varepsilon_2)| \PP_T(\xbf,B(\xbf_*,\varepsilon_1))\inf_{\xbf\in B(\xbf_*,\varepsilon_1), \ybf\in B(\ybf_*,\varepsilon_2)}\close\close p(T,\xbf,\ybf)\\
&\geq |A\cap B(\ybf_*,\varepsilon_2)| \inf_{\xbf\in\domain_R}\PP_T\big(\xbf, B(\xbf_*,\varepsilon_1)\big)\inf_{\xbf\in B(\xbf_*,\varepsilon_1), \ybf\in B(\ybf_*,\varepsilon_2)}\close\close p(T,\xbf,\ybf)\\
&= c\,\nu(A)|B(\ybf_*,\varepsilon_2)|,
\end{align*}
where the last implication follows from the fact that the two infima above are positive.
\end{proof}

We now turn to the proofs of Lemma ~\ref{lem:Hormander}, Lemma~\ref{lem:inf.p(x,y)>0} and Lemma~\ref{lem:inf.P(x,C.epsilon)>0}. First, we make use of the theory of hypoellipticity to prove that $p(t,\xbf,\ybf)$ exists and is smooth on $(0,\infty)\times \mathring{\domain}\times\mathring{\domain}$.
\begin{proof}[Proof of Lemma~\ref{lem:Hormander}]
Consider the vector fields
\begin{align*}
X_0=-\gamma x\partial_x-\frac{hx^2-y^2}{z}\partial_x&- \gamma y\partial_y -(1+h)\frac{xy}{z}\partial_y+(1-h)x\partial_z,\\
X_1=\sqrt{2\kappa_1}\partial_x,&\qquad X_2=\sqrt{2\kappa_2}\partial_y.
\end{align*}
By Corollary~7.2 of~\cite{bellet2006ergodic}, it suffices to show that the list of vector fields
\begin{align*}
\{X_i \}_{i=1}^2, \, \{ [X_i, X_j]\}_{i,j=0}^2, \, \{ [[X_i, X_j], X_k]\}_{i,j,k=0}^2, \ldots 
\end{align*}
has rank $3$ at every point $x\in \mathring{\domain}$.  In the above, $[A,B]$ denotes the commutator of vector fields $A$ and $B$.  Note \begin{align*}
[X_1,X_0]= \big(- \gamma -\frac{2hx}{z} \big)\partial_x-(1+h)\frac{y}{z}\partial_y+(1-h)\partial_z.
\end{align*}
Hence for $\xbf\in\mathring{\domain}$, $\{X_1(\xbf),X_2 (\xbf), [X_1, X_0](\xbf)\}$ spans $\rbb^3$, completing the proof.
\end{proof}

Next, we turn to the uniform bounds for the density in Lemma~\ref{lem:inf.p(x,y)>0} and the probabilities in Lemma \ref{lem:inf.P(x,C.epsilon)>0} whose arguments will make use of the auxiliary result in Lemma \ref{lem:support.theorem} below. In turn, Lemma \ref{lem:support.theorem} is a consequence of \cite[Theorem 1]{doss1982support}, which is an analogue of the Support Theorems \cite{stroock1972degenerate, stroock1972support} adapted to normally reflected diffusions. See also \cite{hopfner2016ergodicity,
ren2016approximate,wu2018limit} for related results on the support of SDEs with reflections.

\begin{lemma} \label{lem:support.theorem}  Consider the control problem
\begin{equation} \label{eqn:control.problem}
\begin{aligned}
\emph{d} x_t &= -\gamma x_t\, \emph{d} t-\frac{hx_t^2-y_t^2}{z_t}\, \emph{d} t+\sqrt{2\kappa_1}\,\emph{d} U^1_t,\\
\emph{d} y_t&= -\gamma y_t\, \emph{d}t-(1+h)\frac{x_ty_t}{z_t}\, \emph{d} t+\sqrt{2\kappa_2}\,\emph{d} U^2_t,\\
\emph{d} z_t&=(1-h)x_t\,\emph{d} t-\emph{d}k_t,
\end{aligned}
\end{equation}
where $U_t=(U^1_t,U^2_t)\in C^1(\rbb)\times C^1(\rbb)$ is a control function with $U^i_0=0$, $i=1,2$. Suppose that $(\xbftd_t,\ktil_t)$ and $(\xbf_t,k_t)$ respectively solve ~\eqref{eqn:control.problem} and \eqref{eqn:xyz} with initial condition $\xbftd_0=\xbftd$ and $\xbf_0=\xbf$ on the deterministic time interval $[0, T]$. Then, for all $\varepsilon>0$, there exists $\widetilde{\varepsilon}$ such that for all $\xbf\in B(\xbftd,\widetilde{\varepsilon}\,)$,
\begin{align*}
\P\bigg\{ \sup_{0\leq t\leq T}|\xbf_t-\xbftd_t|\leq \varepsilon\bigg\}>0.
\end{align*}

\end{lemma}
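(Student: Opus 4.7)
The plan is to adapt the Stroock--Varadhan support theorem to the reflected setting, leveraging two structural features of system~\eqref{eqn:xyz}: the diffusion coefficient is additive (constant in the state variables), and the $z$-component is obtained from an absolutely continuous path via a one-sided Skorokhod reflection at $\{z=1\}$, which is Lipschitz in the uniform norm. For $\delta > 0$ set $\Omega_\delta := \{\sup_{t \in [0,T]}|W_t - U_t| < \delta\}$, where $W = (W^1, W^2)$ and $U = (U^1, U^2)$. Because $U \in C^1$ with $U_0 = 0$, the set $\{w \in C_0([0,T];\rbb^2) : \sup_{t \le T}|w_t - U_t| < \delta\}$ is a nonempty open subset of the support of Wiener measure, hence $\P(\Omega_\delta) > 0$ for every $\delta > 0$. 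It therefore suffices to establish the following pathwise claim: for every $\varepsilon > 0$ there exist $\widetilde{\varepsilon}, \delta > 0$ such that, whenever $\mathbf{x}_0 = \mathbf{x} \in B(\widetilde{\mathbf{x}}, \widetilde{\varepsilon})$ and the realization of $W$ lies in $\Omega_\delta$, one has $\sup_{t \in [0,T]}|\mathbf{x}_t - \widetilde{\mathbf{x}}_t| \le \varepsilon$.

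To prove the pathwise claim, note that $\widetilde{\mathbf{x}}_t$ traces a compact subset of $\domain$ on $[0, T]$, so in particular $\widetilde{z}_t \ge 2\eta$ for some $\eta > 0$ throughout. Shrinking $\varepsilon$ below $\eta$, any trajectory $\mathbf{x}_t$ staying within $\varepsilon$ of $\widetilde{\mathbf{x}}_t$ lies in a slightly enlarged compact set $K' \subset \domain$ on which the drifts appearing in \eqref{eqn:xyz} and \eqref{eqn:control.problem} are globally Lipschitz with some constant $L$. Subtracting the integrated $x$ and $y$ equations and using the additivity of the noise,
\begin{equation*}
|x_t - \widetilde{x}_t| + |y_t - \widetilde{y}_t| \le 2\widetilde{\varepsilon} + 2L\int_0^t |\mathbf{x}_s - \widetilde{\mathbf{x}}_s|\,ds + C_0\,\delta,
\end{equation*}
with $C_0 = \sqrt{2\kappa_1} + \sqrt{2\kappa_2}$, whenever $\mathbf{x}_s \in K'$ for $s \le t$.

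For the $z$-component, write $a_t := z_0 + (1-h)\int_0^t x_s\,ds$ and $\widetilde{a}_t := \widetilde{z}_0 + (1-h)\int_0^t \widetilde{x}_s\,ds$; then $(z_t, k_t)$ and $(\widetilde{z}_t, \widetilde{k}_t)$ arise from $a_t, \widetilde{a}_t$ via the one-sided Skorokhod reflection at $\{z = 1\}$, which admits the explicit representation $k_t = \max_{s \le t}(a_s - 1)_+$. From this formula one immediately obtains the uniform-norm Lipschitz bound $\sup_{s \le t}|z_s - \widetilde{z}_s| \le 2\sup_{s \le t}|a_s - \widetilde{a}_s|$. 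Combining with the preceding $(x,y)$ estimate and invoking Gronwall's lemma gives
\begin{equation*}
\sup_{t \in [0,T]}|\mathbf{x}_t - \widetilde{\mathbf{x}}_t| \le C_1(\widetilde{\varepsilon} + \delta)\,e^{C_2 T},
\end{equation*}
valid so long as $\mathbf{x}_t$ remains in $K'$. Choosing $\widetilde{\varepsilon}$ and $\delta$ small enough that $C_1(\widetilde{\varepsilon} + \delta) e^{C_2 T} < \varepsilon$, a standard continuity bootstrap (the first exit time of $\mathbf{x}_t$ from $K'$ must be $>T$) confirms that the trajectory indeed remains in $K'$ throughout $[0, T]$, completing the proof.

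The principal obstacle is the coupling between the reflection term $k_t$ and the $1/z$ singularity in the $(x, y)$ drift: one cannot close the Gronwall loop without first guaranteeing that $z_t$ remains uniformly bounded away from $0$. Crucially, the reflection is only at the upper boundary $\{z=1\}$, so the lower bound $z_t \ge \eta$ follows from uniform $C^0$-closeness of $z_t$ to $\widetilde{z}_t \ge 2\eta$, which itself relies on the $(x, y)$ bound via the integral representation of $a_t$ and the Lipschitz estimate for the Skorokhod map. The bootstrap argument resolves this mutual dependency.
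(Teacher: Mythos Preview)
Your proof is correct and follows the same Gronwall-plus-Wiener-tube strategy as the paper. The only differences are in technical execution: the paper localizes by truncating the drift with a smooth cutoff $\psi_R$ (so the truncated system is globally Lipschitz and the event $\{\sup_t|\xbf_t-\xbftd_t|\le\varepsilon\}$ coincides with its restriction to $\{\tau_{2R}>T\}$), whereas you use a continuity bootstrap to keep the trajectory in a compact set; and the paper controls the reflection contribution via the monotonicity inequality $\int_0^t(z_s-\ztil_s)(\d k_s-\d\ktil_s)\ge 0$ inside an $L^2$ estimate, whereas you invoke the uniform-norm Lipschitz property of the explicit one-dimensional Skorokhod map $k_t=\max_{s\le t}(a_s-1)_+$. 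Both pairs of choices are standard and interchangeable in this setting.
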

\begin{proof}
See \cite[Theorem 1]{doss1982support}.
\end{proof}

With Lemma~\ref{lem:support.theorem} in hand, we are now ready to give the proof of Lemma~\ref{lem:inf.p(x,y)>0}.

\begin{proof}[Proof of Lemma~\ref{lem:inf.p(x,y)>0}] Considering the deterministic control problem \eqref{eqn:control.problem},
we pick the constant processes 
\begin{align*}
\xtil_t=0,\qquad\ytil_t=0,\qquad\ztil_t=\tfrac{1}{2},\qquad \ktil=0,
\end{align*}
together with
\begin{equation*} 
U^1_t=0,\qquad U^2_t=0,
\end{equation*}
it is clear that with $U_t=(U^1_t,U^2_t)$ defined above, $(\xbftd_t,\ktil_t)$ solves~\eqref{eqn:control.problem}, and drives $\xbf_*=(0,0,1/2)$ at time 0 to the same $\xbf_*$ at time $T$. In view Lemma \ref{lem:support.theorem}, we obtain for all $\varepsilon>0$
\begin{align*}
\P\bigg\{\sup_{0\leq t\leq T}|\xbf_t-\xbftd_t|\leq\varepsilon\bigg\}>0.
\end{align*}
In the above, $\xbf_t$ is the solution of \eqref{eqn:xyz} with initial condition $\xbf_0=\xbf_*$. Together with Lemma~\ref{lem:Hormander}, for all $\varepsilon <1/2$ we have 
\begin{align*}
\PP_T(\xbf_*,B(\xbf_*,\varepsilon))=  \int_{B(\xbf_*,\varepsilon)}\hspace{-0.7cm} p(T,\xbf_*,\ybf)d\ybf\geq \P\bigg\{\sup_{0\leq t\leq T}|\xbf_t-\xbftd_t|\leq\varepsilon\bigg\}>0,
\end{align*}
implying there exists $\ybf_*\in B(\xbf_*,\varepsilon)$ such that $p(T,\xbf_*,\ybf_*)>0$. By continuity of $p(T,\cdot,\cdot)$, we therefore obtain the result. 
\end{proof}

Similar to the proof of Lemma~\ref{lem:inf.p(x,y)>0}, the proof of Lemma~\ref{lem:inf.P(x,C.epsilon)>0} will use a control argument which shows that we can drive our process from $\xbf_0 \in \domain_R$ to $\xbf_*$ at time $T$.
\begin{proof}[Proof of Lemma~\ref{lem:inf.P(x,C.epsilon)>0}] There are two cases to consider depending on how close $z$ is to the boundary $\{ z=1\}$.  

\emph{Case $1$}: $z\leq 3/4$.  Since $z<1$, we have by Lemma~\ref{lem:Hormander} 
\begin{align*}
\PP_T(\xbf,B(\xbf_*,\varepsilon_1))=\int_{B(\xbf_*,\varepsilon_1)}\hspace{-0.7cm}  p(T,\xbf,\ybf)d\ybf .  
\end{align*}
By continuity of the density $p(T,\cdot,\cdot)$ on $\mathring{\domain}\times\mathring{\domain}$, the mapping $\xbf\mapsto \PP_T(\xbf,B(\xbf_*,\varepsilon_1))$ restricted to $\mathring{\mathcal{O}}$ is continuous by the bounded convergence theorem. For a given time $T>0$, choose a deterministic  smooth function $F_t$ satisfying
\begin{equation} \label{eqn:int.x.dt:z<3/4}
F_0=0,\quad F'_0=(1-h)x,\quad F_T=\tfrac{1}{2}-z,\quad F'_T=0,\quad\text{and}\quad \forall t\in[0,T]: \, 1-z> F_t >-z,
\end{equation} 
and set 
\begin{equation} \label{eqn:choice.xt.zt.kt:z<3/4}
\xtil_t=\tfrac{1}{1-h} F'_t,\qquad \ztil_t=z+(1-h)\int_0^t\xtil_s\d s=z+F_t,\quad \ktil_t=0.
\end{equation}
Then, choose $\ytil_t$ arbitrarily such that
\begin{align} \label{eqn:choice.yt:z<3/4}
\ytil_0=y,\qquad\ytil_T=0,
\end{align}
We note that with this choice of $F_t$ in~\eqref{eqn:int.x.dt:z<3/4}, for $t\in[0,T]$, 
\begin{align*}
0<\ztil_t<1,\quad\text{and}\quad z_T=\tfrac{1}{2}.
\end{align*}
Observe that $(\xbftd_t,\ktil_t)$ defined in~\eqref{eqn:choice.xt.zt.kt:z<3/4} and~\eqref{eqn:choice.yt:z<3/4} solves the control problem ~\eqref{eqn:control.problem} with $U^{\xbf}_t=(U^1_t,U^2_t)$ given by
\begin{equation} \label{eqn:control.process:U1U2}
U^1_t=\frac{1}{\sqrt{2\kappa_1}}\int_0^t \xtil_s'+ \gamma\xtil_s+\frac{h\xtil_s^2-\ytil_s^2}{\ztil_s}\d s,\qquad U^2_t=\frac{1}{\sqrt{2\kappa_2}}\int_0^t \ytil_s'+\gamma \ytil_s+(1+h)\frac{\xtil_s\ytil_s}{\ztil_s}\d s,
\end{equation}
and drives the trajectory from $\xbf$ to $\xbf_*=(0,0,1/2)$ at time $T$. 
We invoke Lemma~\ref{lem:support.theorem} to infer that for every $\xbf\in \domain_R\cap \{z\leq 3/4\}$,
\begin{align*}
\PP_T(\xbf,B(\xbf_*,\varepsilon_1))\geq \P\bigg\{\sup_{0\leq t\leq T}|\xbf_t-\xbftd_t|\leq \varepsilon_1\bigg\}>0.
\end{align*}
By compactness of $\domain_R\cap \{z\leq 3/4\}$ and continuity of $ \PP_T(\xbf,B(\xbf_*,\varepsilon_1))$ on $\mathring{\domain}$ with respect to $\xbf $, we conclude that
\begin{align} \label{ineq:inf.x.in.C_R.and.z<3/4}
\inf_{\xbf\in\domain_R\cap\{z\leq 3/4\}}\PP_T\big(\xbf, B(\xbf_*,\varepsilon)\big)>0.
\end{align}

\emph{Case $2$:} $3/4\le z\leq 1$.  Here, we first aim to show that for every $\xbf\in \domain_R\cap\{z\ge3/4\}$, there exists $\varepsilon=\varepsilon(\xbf)>0$ such that
\begin{align} \label{ineq:inf_y.in.B(x).P(y,B(x_*))>0}
\inf_{\ybf\in B(\xbf,\varepsilon(\xbf))}\PP_T(\ybf,B(\xbf_*,\varepsilon_1))\geq c(\xbf)>0.
\end{align}
To do so, similar to case 1, we first choose a deterministic smooth function $G$ satisfying
\begin{equation} \label{eqn:int.x.dt:z>3/4}
G_0=0,\quad G'_0=(1-h)x,\quad G_T=-\frac{1}{4},\quad G'_T=0,\quad\max_{0\leq t\leq T} G_t=\frac{1}{4},\qquad \min_{0\leq t\leq T}G_t=- \frac{1}{4},
\end{equation} 
and set 
\begin{equation*} 
\xtil_t=\tfrac{1}{1-h} G'_t.
\end{equation*}
With the choice of $\xtil_t$ above, we define 
\begin{align}\label{eqn:control.z>3/4.ztilde}
\ztil_t=z+(1-h)\int_0^t\close\xtil_s\d s-\ktil_t=z+G_t-\ktil_t,
\end{align}
where $\ktil_t$, in view of \cite[Formula (1.2)]{pilipenko2014introduction}, is explicitly given by
\begin{equation}\label{eqn:control.z>3/4.ktilde}
\ktil_t=-\min_{0\leq s\leq t}\bigg\{\bigg(1-z-(1-h)\int_0^s\close\xtil_\ell\d \ell \bigg) \wedge 0\bigg\}=-\min_{0\leq s\leq t}\big\{\big(1-z-G_s \big) \wedge 0\big\}.
\end{equation}
We finally choose $\ytil_t$ arbitrarily satisfying $\ytil_0=y,\,\ytil_T=0$ and the control $U^{\xbf}_t=(U^1_t, U^2_T)$ as in~\eqref{eqn:control.process:U1U2}.
To show that $(\xbftd_t,\ktil_t)$ satisfies~\eqref{eqn:control.problem} and drives $\xbf$ to $\xbf_*$ at time $T$, it suffices to prove that $\ztil_t>0$ for all time $t\in[0,T]$ and $\ztil_T=1/2$. To this end, we now claim that 
\begin{align}\label{ineq:min.ztilde=ztilde_T=1/2}
\min_{0\leq t\leq T}\ztil_t=\ztil_T=\tfrac{1}{2}. 
\end{align}
To show the minimum part, we recast~\eqref{eqn:control.z>3/4.ztilde} together with~\eqref{eqn:control.z>3/4.ktilde} as
\begin{align*}
\ztil_t=z+G_t+\min_{0\leq s\leq t}\{(1-z-G_s) \wedge 0\}\geq\tfrac{1}{2},
\end{align*}
which can be rewritten as
\begin{align*}
\min_{0\leq s\leq t}\{(1-z-G_s) \wedge 0\}\geq (1-z-G_t)-\tfrac{1}{2},
\end{align*}
and equivalently,
\begin{align}\label{ineq:ztilde>1/2}
\min_{0\leq s\leq t}\{(1-G_s) \wedge z\}\geq (1-G_t)-\tfrac{1}{2}.
\end{align}
By the choice of $G_t$ in~\eqref{eqn:int.x.dt:z>3/4}, namely $\min_{[0,T]}G_t=-1/4$ and $\max_{[0,T]}G_t=1/4$, on the one hand
\begin{align*}
(1-G_t)-\tfrac{1}{2}\leq \tfrac{3}{4}.
\end{align*}
On the other hand, since $z\geq\tfrac{3}{4}$ and for $s\in[0,T]$, $1-G_s\in[3/4,5/4]$ and
\begin{align*}
\min_{0\leq s\leq t}\{(1-G_s) \wedge z\}\geq \tfrac{3}{4},
\end{align*}
which proves~\eqref{ineq:ztilde>1/2}, and thus establishes the first equality in~\eqref{ineq:min.ztilde=ztilde_T=1/2}. To prove the second equality,
namely $\ztil_T=1/2$, we have to show that equality holds in~\eqref{ineq:ztilde>1/2} at time $T$, i.e.,
\begin{align} \label{eqn:ztilde_T=1/2}
\min_{0\leq t\leq T}\{(1-G_t) \wedge z\}=(1-G_T)-\tfrac{1}{2}.
\end{align}
We invoke~\eqref{eqn:int.x.dt:z>3/4} again to see that 
\begin{align*}
(1-G_T)-\tfrac{1}{2}=\tfrac{3}{4},
\end{align*}
and recall that $\max_{0\leq t\leq T}G_t=1/4$ and $z\geq 3/4$, implying
\begin{align*}
\min_{0\leq t\leq T}\{(1-G_s) \wedge z\}=\tfrac{3}{4},
\end{align*}
which proves~\eqref{eqn:ztilde_T=1/2} and thus completes ~\eqref{ineq:min.ztilde=ztilde_T=1/2}.

Turning to~\eqref{ineq:inf_y.in.B(x).P(y,B(x_*))>0}, since $(\xbftd_t,\ktil_t)$ satisfies~\eqref{eqn:control.problem} and drives $\xbf$ to $\xbf_*$ at time $T$, we invoke Lemma \ref{lem:support.theorem} to infer the existence of an $\varepsilon(\xbf)<\varepsilon_1$ such that for all $\ybf\in B(\xbf,\varepsilon(\xbf))$
\begin{align*} 
\PP_T(\ybf,B(\xbf_*,\varepsilon_1))\geq\P\big\{\sup_{0\leq t\leq T}|\ybf_t-\widetilde{\xbf}_t|\leq \varepsilon_1\big\} \geq c(\xbf)>0.
\end{align*}
This establishes~\eqref{ineq:inf_y.in.B(x).P(y,B(x_*))>0}.

Now, since $\domain_R\cap\{z\geq 3/4\}$ is compact, there exists a finite union of $B(\xbf,\varepsilon(\xbf))$ that covers $\domain_R\cap\{z\geq 3/4\}$. It then follows from~\eqref{ineq:inf_y.in.B(x).P(y,B(x_*))>0} that
\begin{align*} 
\inf_{\ybf\in \domain_R\cap\{z\geq 3/4\} }\PP_T\{\ybf,B(\xbf_*,\varepsilon_1)\}>0,
\end{align*}
which together with~\eqref{ineq:inf.x.in.C_R.and.z<3/4} proves the claim~\eqref{ineq:inf.x.in.C_R}. The proof is thus complete.
\end{proof}
\begin{remark} For general domains, the control $\ktil_t$ as in~\eqref{eqn:control.z>3/4.ktilde} is not known explicitly. In our context, expression~\eqref{eqn:control.z>3/4.ktilde} shows that $\ktil_t$ as a function of $\xtil_t$ is continuous with respect to sup norm. Thus, we are able to derive a uniform bound lower bound for $\mathcal{P}_T(\xbf,B(\xbf_*,\varepsilon_1))$.
\end{remark}

\section{Lyapunov definition} \label{sec:Lyapunov}

 In this section, we use the framework in~\cite{athreya2012propagating,herzog2015noise,herzog2015noiseII,
 herzog2017ergodicity} to define the Lyapunov function $\Psi$ as in the statement of Proposition~~\ref{prop:Lyapunov:xyz}. We first note some:

\subsection{Differences in the reflected setting}
Introduce the operators  
\begin{align}
\L&= -\gamma x\partial_x -\frac{hx^2-y^2}{z}\partial_x-\gamma y\partial_y-(1+h)\frac{xy}{z}\partial_y+\kappa_1\partial_{x}^{2}+\kappa_2\partial_{y}^{2}+(1-h)x\partial_z, \label{generator:L}\\
\Q&=-\partial_z, \label{generator:Q}
\end{align}
and observe that Dynkin's formula for the dynamics~\eqref{eqn:xyz} reads   
\begin{align}
\label{eqn:DF}
\E_{\xbf} \f(\xbf_{t\wedge \tau_R})=\f(\xbf)+\E_{\xbf} \int_0^{t\wedge \tau_R}\close \L\f(\xbf_s)\d s&+\E_{\xbf}\int_0^{t\wedge \tau_R}\close\Q\f(\xbf_s)\d k_s,
\end{align}
for any $\f\in C^2(\domain; \rbb)$, $t\geq 0$ and $R>0$.  Thus the most notable difference between a typical SDE without reflection and one with reflection is the appearance of the boundary operator $\Q$.   In particular, in order to construct a Lyapunov function $\Psi$ as in Proposition~\ref{prop:Lyapunov:xyz}, we have to simultaneously control the interior dynamics driven by $\L$ as well as the boundary effects contained in $\Q$.  

Since $k_t$ is a non-decreasing function, following the common approaches in the standard SDE setting~\cite{khasminskii2011stochastic, meyn1993stability}, this means we should look for a function $\Psi\in C^2(\domain; [1, \infty))$ satisfying the following conditions:
\begin{align*}
\L \Psi \leq -c \Psi +D \,\, \text{ on } \,\, \domain \qquad \text{ and } \qquad \Q \Psi(x,y,1) \leq 0 \,\, \text{ for } (x,y) \in \RR^2  ,
\end{align*}   
for some constants $c, D>0$. If there existed such a $\Psi$ which moreover satisfies part (a) of Proposition~\ref{prop:Lyapunov:xyz}, then parts (b) and (c) of the same result would follow immediately in the usual way by using Dynkin's formula as in~\cite{khasminskii2011stochastic,meyn1992stochastic}.  Following the approach outlined out in~\cite{herzog2015noise,herzog2015noiseII}, however, we find it more convenient to construct a Lyapunov function $\Psi$ which is not globally $C^2$.  In particular, $\Psi$ will be $C^2$ everywhere in $\domain$ except for finitely many, non-intersecting two-dimensional interfaces in the domain on which the function is only continuous.  Intuitively, this is because the dynamics is different in different regions of the phase space so that, when building the function locally on each subregion, it is easier to glue the local functions together continuously (rather than in a $C^2$ manner).  When doing this, however, one issue that needs to be addressed is the absence of an It\^{o}/Tanaka formula for such functions.  While such a formula exists in the standard SDE setting~\cite{peskir2007change,herzog2015noise, herzog2015noiseII}, to the best of the authors' knowledge, one does not exist in the reflected SDE setting.  However, one follows almost immediately from the argumentation in~\cite{herzog2015noise, herzog2015noiseII} (see Appendix~\ref{sec:Tanaka}).

\subsection{Change of coordinates and decomposition of the phase space}\label{sec:space.decomposition}
We find it convenient to construct a Lyapunov function $\Phi=\Phi(u,v,z)$ for the equation~\eqref{eqn:uvz} and then change back to the original system~\eqref{eqn:xyz} to produce a Lyapunov function $\Psi(x,y,z) = \Phi(\tfrac{x}{z^{1/3}}, \tfrac{y}{z^{1/3}}, z)$ afterwards.  Note that, after changing coordinates as in~\eqref{eqn:cocoord}, the respective interior and boundary operators for the reflected system~\eqref{eqn:uvz} are given by
\begin{align}
\L&= -\gamma u\partial_u- \gamma v\partial_v-  \frac{\alpha_h u^2-v^2}{z^{2/3}}\partial_u-(\alphah+1)\frac{uv}{z^{2/3}}\partial_v+(1-h)uz^{1/3}\partial_z \nonumber\\
&\hspace{1cm}+\frac{\kappa_1}{z^{2/3}}\partial_{u}^2+\frac{\kappa_2}{z^{2/3}}\partial_{v}^2, \label{generator:L:uvz}\\
\Q&=\frac{u}{3z}\partial_u+\frac{v}{3z}\partial_v-\partial_z \label{generator:Q:uvz},
\end{align}
where $\alphah=\frac{1}{3}+\frac{2}{3}h$ as in~\eqref{def:alpha}. Our Lyapunov function $\Phi$ will be built piece-by-piece locally on various subregions of the domain making use of these operators.  We first provide some heuristics, however, as to why one should expect the specific subregions utilized in the construction.  Following the approach outlined and developed in~\cite{athreya2012propagating,gawedzki2011ergodic, herzog2015noise,herzog2015noiseII,herzog2017ergodicity}, this will be done by performing a scaling analysis of the operator $\L$ above which affords a better understand of the dynamics near the boundary in space; that is, where either $r=r(u,v)=\sqrt{u^2+v^2} \gg1 $ or $z\approx 0$.  The specific scalings chosen represent different paths near the boundary in space.

 Following the heuristics of Section~\ref{sec:intuition}, it is helpful to express $\L$ as 
 \begin{align*}
 \L = z^{-2/3} \M,
 \end{align*} 
 where $\M$ is as in~\eqref{def:M}.
 In what follows, we will actually do the scaling analysis applied to $\M$ as opposed to $\L$.  Since the factor $z^{-2/3}$ is positive on the phase space $\domain$, the analysis done for $\M$ is easily transferrable to $\L$.  Although we do not try to make precise sense of it, this factorization translates to a random time change applied to~\eqref{eqn:uvz} where the dynamics slows down or speeds up according to the $z$-process.        
  
 \subsubsection{Dynamics along a fixed large ray in the $(u,v)$-plane} \label{sec:Lyapunov:large(u,v)}
 We first introduce the scaling transformation 
\begin{align*}
S_{a}^{\lambda,1}: (u,v,z)\mapsto (\lambda u,\lambda v, \lambda^{-a}z),
\end{align*}
for a scaling parameter $\lambda\gg 1$ and a fixed constant $a\geq 0$.  This allows us to realize the dynamics along a fixed ray away from the negative $u$-axis, while also considering the possibility that $z$ tends to $0$ simultaneously, i.e., if $a>0$.     

Considering $\M$ as in~\eqref{def:M}, under the transformation $S_a^{\lambda}$ we find that 
\begin{align}
\label{eqn:scaling1}
\M\circ S_{a}^{\lambda,1}(u,v,z)&=-\lambda^{-\tfrac{2a}{3}}\gamma u z^{2/3} \partial_u-\lambda^{-\tfrac{2a}{3}}\gamma v z^{2/3}\partial_v -\lambda (\alpha_h u^2-v^2) \partial_u-\lambda (\alpha_ h+1) uv\partial_v\\
&\qquad+\lambda (1-h) uz \partial_z+\lambda^{-2} \kappa_1\partial_{u}^{2}+ \lambda^{-2} \kappa_2\partial_{v}^{2}.\nonumber
\end{align}
When $\lambda \gg 1$ and $a\geq 0$, the dominant balance of terms in $\M$ is contained in the operator $\T_1$ defined by 
\begin{align}
\label{eqn:R1:T1}
\T_1 = -  (\alpha_h u^2-v^2)\partial_u-(\alphah+1) uv\partial_v+(1-h)uz\partial_z  .
\end{align}
Thus the noise terms and the term $  -\gamma u z^{2/3}\partial_u- \gamma vz^{2/3} \partial_v$ are negligible in a region of the form 
\begin{align}
\label{def:R0R1}
\mathcal{R}_0\cup \mathcal{R}_1:= \{(u,v,z) \in \domain \, : \, r\geq r_*, u\geq -C|v|\} ,
\end{align}  
where $r_*, C>0$ are sufficiently large constants to be determined later.  The individual subregions $\mathcal{R}_0$ and $\mathcal{R}_1$ that define the union above will also be defined later.  Essentially, though, the region $\R_0$ is defined by our basic initial guess of a Lyapunov function, and is hence the region in which this function is actually a Lyapunov function locally on that region.

\subsubsection{Dynamics at large radius near the negative $u$-axis}
For the case when $r\gg1$ and the process is near the negative $u$-axis, we consider the scaling transformation  
\begin{align*}
S_{a,b}^{\lambda,2}: (u,v,z) \mapsto ( \lambda u, \lambda^{-b} v, \lambda^{-a} z),
\end{align*}
where $\lambda \gg 1$ is the scaling parameter and $b>0$ and $a\geq 0$ are fixed constants.  Then it follows that 
\begin{align*}
\M\circ S_{a,b}^{\lambda,2}(u,v,z)&=-\lambda^{-\tfrac{2a}{3}}\gamma u z^{2/3} \partial_u-\lambda^{-\tfrac{2a}{3}}\gamma v z^{2/3}\partial_v -(\lambda\alpha_h u^2-\lambda^{-2b-1}v^2) \partial_u-\lambda (\alpha_ h+1) uv\partial_v\\
&\qquad+\lambda (1-h) uz \partial_z+\lambda^{-2} \kappa_1\partial_{u}^{2}+ \lambda^{2b} \kappa_2\partial_{v}^{2} \nonumber \\
&\approx  -\lambda \alpha_h  u^2 \partial_u - \lambda (\alpha_{h}+1)uv \partial_v + \lambda (1-h) u z \partial_z  + \lambda^{2b} \kappa_2 \partial_v^2.  
\end{align*}  
Thus the dominant balance of terms is contained in the operator \begin{align}
\label{eqn:T3def}
|u|\T_3:=-\alpha_h u^2\partial_u-(\alpha_h+1)uv\partial_v+\kappa_2\partial_{v}^2+(1-h)zu\partial_z+\kappa_2\partial_{v}^2,
\end{align}
and moreover the region in which the noise becomes dominant is defined precisely when $b=1/2$ above.  In particular, $|u|\T_3$ is a good approximation of the dynamics in the region $\mathcal{R}_2\cup \mathcal{R}_3$ where 
\begin{align}
\label{def:R2}
\R_2&=\{r\geq r_*, u \leq -C |v| , |u|^{1/2}v\geq \eta_*\}\cap\{0< z\leq 1\},\\
\R_3&=\{r\geq r_*, u \leq - C |v|, |u|^{1/2}v\leq \eta_*\}\cap\{0< z\leq 1\} \label{def:R3}.
\end{align} 
Observe that the boundary between $\R_2$ and $\R_3$ where noise becomes dominant is defined precisely by the threshold $b=1/2$.

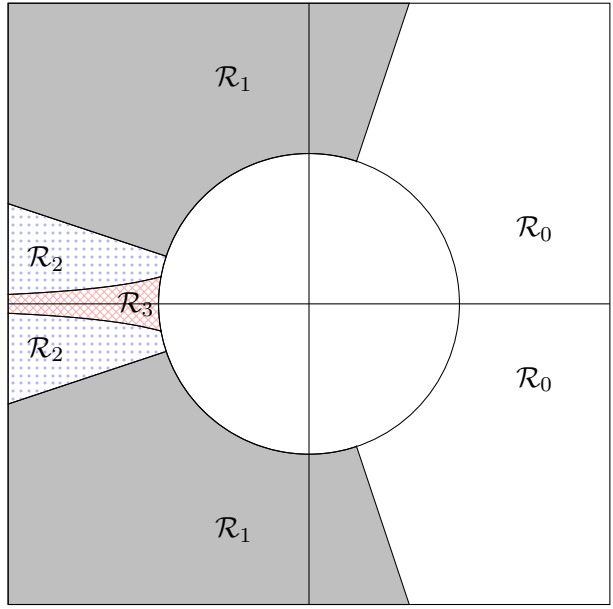
\begin{figure}[H]

\begin{tikzpicture}
\draw[fill=gray!50!white] 
(0.632,1.897) -- 
(1.333,4) -- (-4,4) -- (-4,1.333) -- (-1.897,0.632) --
plot[smooth,samples=100,domain=-1.897:0.632] (\x,{sqrt(4-\x*\x)}) -- cycle;
\draw[fill=gray!50!white] 
(0.632,-1.897) -- 
(1.333,-4) -- (-4,-4) -- (-4,-1.333) -- (-1.897,-0.632) --
plot[smooth,samples=100,domain=-1.897:0.632] (\x,{-sqrt(4-\x*\x)}) -- cycle;

\draw[ pattern=dots, pattern color = blue!30] 
(-4,0.125) -- 
plot[smooth,samples=100,domain=-4:-1.966] (\x,{1/abs(\x)^(1.5)})
-- 
plot[smooth,samples=100,domain=-1.966:-1.897] (\x,{sqrt(4-\x*\x)})
 -- (-4,1.333)  -- cycle;
 
 \draw[ pattern=dots, pattern color = blue!30] 
(-4,-0.125) -- 
plot[smooth,samples=100,domain=-4:-1.966] (\x,{-1/abs(\x)^(1.5)})
-- 
plot[smooth,samples=100,domain=-1.966:-1.897] (\x,{-sqrt(4-\x*\x)})
 -- (-4,-1.333)  -- cycle;
 
 \draw[ pattern=crosshatch, pattern color = red!30] 
(-4,0.125) -- 
plot[smooth,samples=100,domain=-4:-1.966] (\x,{1/abs(\x)^(1.5)})
-- 
plot[smooth,samples=100,domain=-1.966:-2] (\x,{sqrt(4-\x*\x)}) --
plot[smooth,samples=100,domain=-2:-1.966] (\x,{-sqrt(4-\x*\x)})
 -- 
 plot[smooth,samples=100,domain=-1.966:-4] (\x,{-1/abs(\x)^(1.5)})  -- cycle;

\draw (-4,-4) -- (-4,4) -- (4,4) -- (4,-4)-- (-4,-4);
      \draw[-] (-4,0) -- (4,0) ;
      \draw[-] (0,-4) -- (0,4); 

\draw (0,0) circle (2cm);

\draw[color=black] (3,1)  node{$\R_0$};
\draw[color=black] (3,-1)  node{$\R_0$};

\draw[color=black] (-1,3)  node{$\R_1$};
\draw[color=black] (-1,-3)  node{$\R_1$};

\draw[color=black] (-3.5,0.6)  node{$\R_2$};
\draw[color=black] (-3.5,-0.6)  node{$\R_2$};

\draw[color=black] (-2.3,0)  node{$\R_3$};

\end{tikzpicture}
 \caption{The outer subregions $\R_i$, $i=0,1,2,3$ in $(u,v)-$plane.}
\end{figure}

\begin{remark}
While we introduced the operator $|u| \T_3$ above, we have yet to introduce $\T_2$.  This is done in~\eqref{def:T2}.  This operator serves to better transition the dynamics from $\R_1$ to $\R_3$ in the region $\R_2$.  
\end{remark}

\subsubsection{Dynamics at bounded radius and small $z$}
\label{subsub:smallz}
Lastly, in the case when $r$ is bounded and $z$ is near $0$, we consider the scaling 
\begin{align*}
S^{\lambda,3}_a: (u,v,z) \mapsto (u,v, \lambda^{-a} z) ,
\end{align*}
where $\lambda \gg 1$ is our scaling parameter and $a>0$ is a fixed constant.  In this case, when $\lambda \gg1$
\begin{align*}
\M\circ S_{a}^{\lambda,3}(u,v,z)&=-\lambda^{-\tfrac{2a}{3}}\gamma u z^{2/3} \partial_u-\lambda^{-\tfrac{2a}{3}}\gamma v z^{2/3}\partial_v -(\alpha_h u^2-v^2) \partial_u- (\alpha_ h+1) uv\partial_v\nonumber\\ 
&\qquad+(1-h) uz \partial_z+ \kappa_1\partial_{u}^{2}+  \kappa_2\partial_{v}^{2}\nonumber  \\ 
&\approx  -(\alpha_h  u^2-v^2) \partial_u -  (\alpha_{h}+1)uv \partial_v + (1-h) u z \partial_z  +  \kappa_1 \partial_u^2 + \kappa_2 \partial_v^2=\N.     
\end{align*}   
Unfortunately, in this case the only terms that can be neglected are $-\gamma u z^{2/3} \partial_u$ and $-\gamma v z^{2/3} \partial_v$.  This naturally translates to difficulties in constructing the appropriate Lyapunov functional in the domain
\begin{equation}
\label{def:Rinner} 
\RI := \{ r \leq 2r_* \}\cap \{z<\varepsilon_0\},
\end{equation}
where $\varepsilon_0>0$ is small and to be determined later, since almost all terms in $\M$ must be included in the PDE we wish to solve.  Nevertheless, we have successfully decomposed the domain $\domain$ outside of a compact set according to the dynamics near the boundaries in space.  

We thus define the \textbf{outer} $\RO$ and \textbf{inner} $\RI$ regions of $\domain$ as 
\begin{align}
\RO= \R_0 \cup \R_1 \cup \R_2 \cup \R_3,
\end{align}   
and $\RI$ is as in~\eqref{def:Rinner}.

\subsection{The construction in $\RO$} \label{sec:outer-region} In this subsection, we define local functions $\f_{i, j}$ on $\R_i$, $i=0,1,2,3$ and $j=1,2,3$, that will eventually make up the part of the Lyapunov function $\Phi$ on $\RO$. 

\subsubsection{Parameter choices}

Before beginning the construction, we make some choices in parameters.  All of these choices will become apparent later.  The parameters either correspond to boundary parameters in the definitions of the regions $\R_i$ or constants that appear in the definition of the local functions $\f_{i,j}$.      
\begin{choices}
\label{cond:pq}  We let constants $p_i,q_i,\alpha_i,\beta_i$, $i=1,2$, satisfy $p_1>0,\,p_2>0$, $q_1=0$, $q_2>0$, 
\begin{align}
 0&<\alpha_h p_i-(1-h)q_i=\beta_i<\alpha_i<1,
\label{cond:alphai.betai}\\
\text{and}\quad q_2&>\tfrac{1}{3}p_2+\tfrac{1}{2}\alpha_2,\quad p_2>p_1,\quad p_2+\tfrac{3}{2}\alpha_2>p_1+\tfrac{3}{2}\alpha_1.\label{cond:p2.q2} 
\end{align}
\end{choices} 

\begin{remark} \label{rem:p_1} We note that the choice of parameters $(p_i,q_i,\alpha_i,\beta_i)$, $i=1,2$, above is merely to establish the well-posedness and geometric ergodicity of~\eqref{eqn:uvz} and thus that of~\eqref{eqn:xyz}. To achieve the optimal moment bound~\eqref{ineq:int.r^(p+1).pi(dx)} in Theorem~\ref{thm:geometric.ergodicity} (c), however, we will have to pick another  set of parameters  $(p_i,q_i,\alpha_i,\beta_i)$, $i=3,4,$ later. See the proof of Theorem~\ref{thm:geometric.ergodicity} in Section~\ref{sec:proof-thm-ergodicity}.
\end{remark}

\begin{choices} For a sufficiently large constant $C>0$, the parameters $\eta_*$ and $r_*$ satisfy
\begin{align} \label{cond:r*.eta*}
\eta_*=C\sqrt{\kappa_2} \qquad \text{ and } \qquad r_*\gg \max\{C,\eta_*,\gamma\}. 
\end{align}
\end{choices}
In the above, we recall that $r_*$ is the parameter for the boundary $r=r_*$ in $\RO$ and $\eta_*$ is the parameter for the boundary between $\R_2$ and $\R_3$ as in~\eqref{def:R2} and~\eqref{def:R3}.

\subsubsection{Initializing the procedure in $\R_0$}  We first consider an initial region $\R_0$ given by 
\begin{align}
\R_0&=\{ r\geq r_*,C u\geq |v|\}\cap\{0<  z\leq 1\}.
\end{align} 
This region serves to start the construction procedure by providing an initial guess that will be propagated through the rest of the outer domain $\RO$ using boundary-valued PDEs.  

In $\R_0$, we let $\f_{0,i}$, $i=1,2$, be given by 
\begin{equation} \label{form:R0:phi0}
\f_{0,i} = r^{p_i}z^{q_i},\quad i=1,2,
\end{equation}
where $p_i,\,q_i$ are as in a Choice~\ref{cond:pq}.  Recalling that the operator $\T_1$ in~\eqref{eqn:R1:T1} contains the dominant balance of terms in $\M$ in this region, we observe that 
\begin{align}
\T_1 \f_{0, i}  = -(p_i \alpha_h -q_i(1-h) )u \f_{0, i}.  
\end{align}
Hence we see why we want at least part of Choice~\ref{cond:pq} so that the above is large, negative in $\R_0$.

\subsubsection{The construction in $\R_1$} 
In this region, the dynamics is again asymptotically determined by the operator $\T_1$ as in~\eqref{eqn:R1:T1} in the region $\R_1$ given by 
\begin{align}
\R_1  =\{ (u,v,z) \, : \, r\geq r_*, \, C^{-1} |v| \geq u \geq -C|v|\}. 
\end{align}
Here, we propagate the initial guesses $\f_{0,i}$, $i=1,2$, through the region $\R_1$ using the following PDEs
\begin{equation}\label{eqn:R1:T1:boundaryproblem}
\begin{cases}
\T_1\f_{1,i} = -c_{1,i} r^{p_i+1}z^{q_i}\Big|\dfrac{r}{v}\Big|^{\alpha_i},  \\ 
\f_{1,i}\big|_{\{u=C^{-1}|v|\}} = \f_{0,i}, \qquad i=1,2.  
\end{cases}
\end{equation}
In the above, $\alpha_i$ is as in Choice~\ref{cond:pq} and $c_{1,i}$ satisfies
\begin{equation}\label{cond:R1:c1}
 c_{1,i}=\frac{\beta_i}{2 C}.
\end{equation}
\begin{remark} The choices of the positive constants $c_{1,i}$ are to ensure that the local-time contributions from $\f_{0,i}$ and $\f_{1, i}$ on the boundary $\{u=C^{-1} |v|\}$ between $\R_0$ and $\R_1$ have the appropriate sign (see the Tanaka formula~\eqref{eqn:Tanaka} in Appendix~\ref{sec:Tanaka}).

\end{remark}
In order to solve the PDE~\eqref{eqn:R1:T1:boundaryproblem}, we use the method of characteristics to obtain the explicit formula for $\f_{1,i}$.  First, however, it is helpful to transform the equation~\eqref{eqn:R1:T1:boundaryproblem} to cylindrical coordinates $(r, \theta, z)$ to see that, as a function of $(r, \theta, z)$, $\f_{1,i}$ satisfies the following PDE
\begin{align}
\label{eqn:transR1}
\begin{cases}
\widetilde{\T_1} \f_{1, i}(r,\theta, z) = -c_{1, i} \frac{r^{p_i} }{|\sin \theta|^{\alpha_i}} z^{q_i}\,\, \text{ on } \,\, \R_1,&\\
\f_{1,i}\big|_{\{\cos \theta =C^{-1}|\sin \theta|\}} = \f_{0,i}, \qquad i=1,2,& \end{cases} 
\end{align} 
where 
\begin{align}
\widetilde{\T_1}= -\alpha_h r \cos \theta \partial_r - \sin \theta \partial_\theta + (1-h) z \cos \theta \partial_z.  
\end{align}
Indeed, after changing coordinates, one can cancel one power of $r$ from both sides of the resulting equation.  This allows us to work with the simpler dynamics drived by $\widetilde{\T_1}$ above which can be explicitly solved.  

Solving the equation~\eqref{eqn:transR1} and then changing back to the original coordinates $(u,v,z)$ produces the formula
\begin{equation}\label{eqn:R1:phi1}
\begin{aligned}
\f_{1,i}(u,v,z)&= \frac{r^{p_i}z^{q_i}}{(C^{-2}+1)^{\beta_i /2}}\Big|\frac{r}{v}\Big|^{ \beta_i }+c_{1,i}r^{p_i}z^{q_i}\Big|\frac{r}{v}\Big|^{\beta_i}\int_{u/|v|}^{1/C}(t^2+1)^{\frac{\alpha_i-\beta_i -1}{2}}\d t.
\end{aligned}
\end{equation}
Alternatively, a routine but tedious calculation shows that $\f_{1,i}$ solves~\eqref{eqn:R1:T1:boundaryproblem}.

\subsubsection{The construction in $\R_2$}
We next propagate the previous functions $\f_{1,i}$, $i=1,2$, through the region $\R_2$ as in~\eqref{def:R2}. In this transition region, the dominant balance of terms in $\M$ is contained in the first-order part of $|u| \T_3$ as in~\eqref{eqn:T3def}, which is given by
\begin{align}
\label{def:T2}
\T_2:= -\alpha_h u^2\partial_u-(\alpha_h+1)uv\partial_v+(1-h)zu\partial_z. 
\end{align}
Indeed, the restriction $|u|^{1/2} |v| \geq \eta_*$ for $\eta_*>0$ sufficiently large ensures that the second-order part in $|u| \T_3$ does not interfere with $\T_2$.  Consequently, we define $\f_{2,i}$, $i=1,2$, as the solutions of the following PDEs:
\begin{equation}\label{eqn:R2:T2:boundaryproblem}
\begin{cases}
 \T_2\f_{2,i}=-c_{2,i} |u|^{p_i+1}z^{q_i}\Big|\dfrac{u}{v}\Big|^{\alpha_i} \,\, \text{ on } \,\, \R_2,&\\
\f_{2,i}\big|_{\{u=-C|v|\}} =\f_{1,i}, \,\,\, i=1,2.&
\end{cases}
\end{equation}
In the above, we choose the positive constants $c_{2,i}$, $i=1,2$, according to 
\begin{equation} \label{cond:R2:c2}
c_{2,i}=\tfrac{1}{2} c_{1,i}.
\end{equation}
Similar to the construction in region $\R_1$, we can use the method of characteristics 
to arrive at an explicit formula for $\f_{2, i}$
\begin{equation}\label{eqn:R2:phi2}
\begin{aligned}
&\f_{2,i}(u,v,z)= A_{2,i}|u|^{p_i}z^{q_i}\Big|\frac{u}{v}\Big|^{\beta_i }+B_{2,i}|u|^{p_i}z^{q_i}\Big|\frac{u}{v}\Big|^{\alpha_i},
\end{aligned}
\end{equation}
where the constants $A_{2, i}$ and $B_{2, i}$ satisfy
\begin{equation}\label{eqn:R2:constantA2}
\begin{aligned}
A_{2,i}&=(C^{-2}+1)^{p_i/2}+c_{1,i}(C^{-2}+1)^{\frac{p_i+\beta_i}{2}}\int_{-C}^{C^{-1}}\close (t^2+1 )^{\frac{\alpha_i-\beta_i-1 }{2}}\d t-B_{2,i}C^{\alpha_i-\beta_i },
\end{aligned}
\end{equation} 
and 
\begin{equation}\label{eqn:R2:constantB2}
B_{2,i}=\frac{ c_{2,i}}{ \alpha_i-\beta_i  }.
\end{equation}
In this case, applying the method of characteristics is slightly less complicated than in $\R_1$ after factoring out a power of $-u=|u|$ from both sides of~\eqref{eqn:R2:T2:boundaryproblem} since the resulting dynamics is simpler.

\subsubsection{The construction in $\R_3$}
We now consider the last subdomain $\R_3$ in the outer region $\RO$. Recall that the dominant balance of terms in $\M$ is contained in the operator $|u| \T_3$ as in~\eqref{eqn:T3def}.  As suggested by the expression $|u| \T_3$, we will actually propagate $\f_{i,2}$ through this region using $\T_3$ instead of $|u| \T_3$.  Note that this is equivalent to making a random time change since $|u|>0$ is large in $\R_3$.  Different from the previous regions, however, the first exit time $T_3$ from the region $\R_3$ for any diffusion defined by $\T_3$ is now random. Nevertheless, we will see that this exit time $T_3$ can be associated to one of an OU process by making the change of coordinates 
\begin{equation}
\eta=|u|^{1/2}v.
\end{equation}
This will allow us to solve the associated second-order PDEs.  

To this end, offering an abuse of notation, consider the process $(u_t, v_t, z_t)$ solving 
\begin{align}
\label{eqn:T3d}
\d u_t &=  \alpha_h u \, \dt, \\
\d v_t&= (\alpha_h +1) v \, \d t + \frac{\sqrt{2\kappa_2}}{ |u_t|^{1/2}} \, \dW_t, \nonumber \\
\d z_t&=-(1-h) z \, \dt ,\nonumber 
\end{align}
where $W_t$ is a standard Brownian motion on $\RR$.  
Then It\^{o}'s formula implies that $\eta_t = |u_t|^{1/2} v_t$ satisfies
\begin{align*}
\d\eta_t=\big(\tfrac{3}{2}+h\big)\eta_t\d t+\sqrt{2\kappa_2}\, \d W_t .
\end{align*}
Hence, provided $u_0 \neq 0$,
\begin{align*}
T_3:=\inf_{t\geq 0}\{|u_t|^{1/2}|v_t|\geq \eta_*, (u_0,v_0,z_0)\in \R_3\}=\inf_{t\geq 0}\{  |\eta_t|\geq \eta_*, |\eta_0|\leq \eta_*\}.
\end{align*}
Furthermore, the operator $\T_3$ given by~\eqref{eqn:T3def} in the coordinates $(u, \eta, z)$ becomes 
\begin{equation}\label{eqn:R3:T:hat3}
\widetilde{\T}_3=\alpha_h u\partial_u+\big(\tfrac{3}{2}+h\big)\eta\partial_\eta+\kappa_2\partial_{\eta}^2-(1-h)z\partial_z.
\end{equation}

With the above observations, we define $\f_{3, i}$, $i=1,2$ as the solutions of the following PDEs:
\begin{equation}\label{eqn:R3:T.hatphi3:boundary.problem}
\begin{cases}
\widetilde{\T}_3\f_{3,i}=-c_{3,i}|u|^{p_i+\frac{3}{2}\alpha_i}z^{q_i} \,\, \text{ on } \,\, \R_3,&\\
\f_{3,i}\big|_{\{|\eta|=\eta_*\}}=\f_{2,i}, \,\,\, i=1,2.&
\end{cases}\end{equation}
where we offer the abuse of notation that $\f_{3,i}$ denotes both the function in $(u,\eta, z)$ and $(u,v,z)$.  The constants $c_{3,i}$ are chosen to satisfy
\begin{equation} \label{cond:R3:c3}
c_{3,i}=\frac{c_{2,i}}{2\eta_*^{\alpha_i}}.
\end{equation}
Formally solving the above equation, we obtain
\begin{equation}\label{eqn:R3:phi3}
\begin{aligned}
\f_{3,i}(u,v,z)&=A_{3,i}|u|^{p_i+\frac{3}{2}\beta_i}z^{q_i}\E_\eta \big[e^{\gamma_i T_3}\big]+ B_{3,i}|u|^{p_i+\frac{3}{2}\alpha_i}z^{q_i}\E_\eta\big[e^{\gammah_i T_3}\big]\\
&\qquad+C_{3,i} |u|^{p_i+\frac{3}{2}\alpha_i}z^{q_i}\E_\eta\big[e^{\gammah_i T_3}-1\big].
\end{aligned}
\end{equation}
In the above, we have set
\begin{equation} \label{cond:R3:gamma_i.gammahat_i}
\gamma_i=(h+\tfrac{3}{2})\beta_i,\qquad \gammah_i=\beta_i+(h+\tfrac{1}{2})\alpha_i,
\end{equation}
and
\begin{equation}\label{cond:R3:A3.B3}
A_{3,i}=\frac{A_{2,i}}{\eta_*^{\beta_i }},\qquad B_{3,i}= \frac{B_{2,i}}{\eta_*^{\alpha_i}},\qquad C_{3,i}=\frac{c_{3,i}}{\gammah_i},
\end{equation}
where $A_{2,i}$ and $B_{2,i}$ are as in~\eqref{eqn:R2:constantA2} and~\eqref{eqn:R2:constantB2}, respectively. Making use of an analysis on the exiting time $T_3$, cf. Lemma~\ref{lem:R3:G}, we will prove that $\f_{3,i}$ given by~\eqref{eqn:R3:phi3} is well-defined and satisfies the PDE above.  This involves showing, in particular, that the Laplace transforms of the exit time $T_3$ in~\eqref{eqn:R3:phi3} make sense and are smooth functions of $\eta$ for $\eta\in [-\eta_*, \eta_*]$.

\subsubsection{Definition of $\Phi_\emph{O}$ on $\R_\emph{O}$}
Using the functions $\f_{j,i}$, we can now define the outer part $\fO$ of the Lyapunov function $\Phi$ on $\R_\emph{O}$.  \begin{definition} \label{def:f_O} Let $\f_i(\ubf)=\f_i(u,v,z)$, $i=1,2$, be the functions on $\R_\emph{O}$ given by
$$\f_i(\ubf)=\f_{j,i}(\ubf),\qquad\ubf\in\R_j, \quad j=0,1,2,3.$$
For $\ubf\in\R_\emph{O}$, we define the \textbf{outer part} $\Phi_\emph{O}$ of $\Phi$ by 
\begin{equation*}
\Phi_\emph{O}(\ubf)=\f_1(\ubf)+\f_2(\ubf).
\end{equation*}

\end{definition}

\begin{remark} \label{rem:f_1}
Recalling that $q_1=0$ in Choice~\ref{cond:pq}, in Definition~\ref{def:f_O}, $\f_1$ is indeed a function of $(u,v)$ only.
\end{remark}

\subsection{The construction in $\RI$} \label{sec:inner-region}  We next turn to the construction in the inner region $\RI$ as in~\eqref{def:Rinner}.  While dissipation in the system in $\RO$ is generated by transport phenomena; that is, the process exits `bad' parts of space, e.g., $\R_3$, and is transported to a radially contractive region $\R_0$, the situation in $\RI$ is different.  In the inner region, dissipation near the boundary in space, in this case $z\approx 0$, occurs due to global averaging effects of the system as discussed in Section~\ref{sec:intuition}.  Our Lyapunov function $\Phi$ in this region, therefore, must reflect these effects.   

  To this end, we recall the process $(U_t, V_t)$ as in~\eqref{eqn:UVZ} is Markov with generator $\A$ given by~\eqref{def:A}.  We also recall by the heuristics of Section~\ref{subsub:smallz} that $(U_t, V_t)$ determines the approximate dynamics in $\RI$ projected onto the first two coordinates when the initial $z$-coordinate is small.  For $n\in \N$, let 
  \begin{align}
  \tau_n = \inf\{ t\geq 0 \,: \, |(U_t, V_t)| \geq n \}.  
  \end{align}       
Recalling that the function $\f_1$ in Definition~\ref{def:f_O} does not depend on $z$ since $q_1=0$ (cf. Remark~\ref{rem:f_1}), we introduce the following terminology.  
\begin{definition} \label{def:averaging.Lyapunov}
Suppose that there exists a function $\psi \in C(\RR^2; \RR)$ satisfying the following two properties:
\begin{itemize}
\item[(i)] $\psi(u, v ) =o(\f_1( u , v ))$ as $ |(u,v)| \rightarrow \infty$;
\item[(ii)] There exist constants $C_1, \epsilon >0$ such that the following estimate holds for all $n\in \mathbf{N}$, $t\geq 0$ and $(u , v ) \in \RR^2$
\begin{align}
\label{eqn:avgb}
\E_{( u , v )} \psi(U_{t\wedge \tau_n},  V _{t\wedge \tau_n}) \leq \psi(u, v ) +\E_{(u, v )}\int_0^{t\wedge \tau_n}\close\close C_1 U_s - \epsilon \, \emph{d} s.   
\end{align}
\end{itemize}  
We call such a function $\psi$ satisfying the above an \textbf{averaging Lyapunov function} corresponding to $\A$ with constants $C_1, \epsilon >0$.      
\end{definition}

\begin{remark} \label{rem:averaging-Lyapunov}
Note that any positive scalar multiple of an averaging Lyapunov function corresponding to $\A$ is also an averaging Lyapunov function corresponding to $\A$, although with different constants.  
\end{remark}

It turns out that if such an averaging Lyapunov function exists then the mean of $U$ with respect to the invariant probability measure $\mu$ for $\A$ is positive.  This is precisely stated in the following
lemma whose proof will be deferred to Appendix~\ref{sec:auxiliary-result}.
\begin{lemma} \label{lem:mu_h>0}
Suppose that there exists an averaging Lyapunov function $\psi$ corresponding to $\A$ as in~\eqref{def:A}.  Then $\mu(U)=\int_{\rbb^2}U\,\mu(\emph{d} U,\emph{d} V )>0$ where $\mu$ is the unique invariant measure for~$\A$.    
\end{lemma}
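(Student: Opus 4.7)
The plan is to start the $\A$-diffusion $(U_t, V_t)$ at stationarity, integrate the averaging bound~\eqref{eqn:avgb} against $\mu$, and use invariance to cancel the $\psi$ contributions. If this can be carried through, one obtains $0 \le t(C_1 \mu(U) - \epsilon)$ for every $t > 0$, which immediately gives $\mu(U) \ge \epsilon / C_1 > 0$.

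The first thing I would do is verify the moment bounds $\mu(|\psi|) < \infty$ and $\mu(|U|) < \infty$ needed to justify the manipulations. Because $q_1 = 0$ in Choice~\ref{cond:pq}, the function $\f_1$ of Definition~\ref{def:f_O} depends only on $(u,v)$, and the scaling analysis of Section~\ref{sec:outer-region} shows that it serves as a Foster-Lyapunov function for $\A$: namely $\A \f_1 \le -c\, r^{p_1+1} + D$ outside a compact set for some $c,D > 0$, after extending $\f_1$ continuously to a small inner compact region. Combined with the well-posedness and ergodicity of $(U_t,V_t)$ established in~\cite{birrell2012transition}, this yields $\mu(\f_1) + \mu(r^{p_1+1}) < \infty$, so $\mu(|U|) \le \mu(r) < \infty$. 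Continuity of $\psi$ together with $\psi = o(\f_1)$ from part (i) of Definition~\ref{def:averaging.Lyapunov} gives a global bound $|\psi(u,v)| \le C_\psi(1 + \f_1(u,v))$, whence $\mu(|\psi|) < \infty$.

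Next I would integrate~\eqref{eqn:avgb} against $\mu$ in the initial variable to get
$$\E_\mu \psi(U_{t \wedge \tau_n}, V_{t \wedge \tau_n}) \le \mu(\psi) + \E_\mu \int_0^{t \wedge \tau_n} \bigl( C_1 U_s - \epsilon\bigr)\, \d s,$$
and pass to the limit $n \to \infty$. Nonexplosion of the $\A$-dynamics gives $\tau_n \uparrow \infty$ almost surely. For the right-hand side, dominated convergence using $\E_\mu\int_0^t|U_s|\,\d s = t\mu(|U|) < \infty$ yields convergence to $\E_\mu \int_0^t (C_1 U_s - \epsilon)\,\d s$. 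For the left-hand side, a separate Dynkin argument applied to $\f_1 \wedge M$ and letting $M \to \infty$ produces the uniform bound $\E_\mu \f_1(U_{t\wedge\tau_n}, V_{t\wedge\tau_n}) \le \mu(\f_1)$ for all $n$, and combining this with $\psi = o(\f_1)$ furnishes uniform integrability, so that $\E_\mu \psi(U_{t\wedge\tau_n}, V_{t\wedge\tau_n}) \to \E_\mu \psi(U_t,V_t) = \mu(\psi)$ by stationarity. Applying Fubini in the remaining integral gives $\E_\mu \int_0^t U_s\,\d s = t \mu(U)$, and the desired inequality $0 \le t(C_1\mu(U) - \epsilon)$ follows.

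The main obstacle is the uniform-integrability bookkeeping on the left-hand side as $n \to \infty$: since $\psi$ is only assumed continuous (not $C^2$), we cannot apply Dynkin's formula to $\psi$ directly and must route the argument through the Lyapunov function $\f_1$. The auxiliary bound $\E_\mu \f_1(U_{t\wedge\tau_n},V_{t\wedge\tau_n}) \le \mu(\f_1)$ itself requires the $M\to\infty$ truncation, after which the little-o assumption converts it into the uniform integrability needed to pass to the limit. Everything else in the argument is then routine dominated convergence and Fubini.
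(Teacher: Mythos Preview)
Your argument is correct, but it takes a different route from the paper's. You start at stationarity and use invariance to cancel the $\psi$ terms, which forces you to do the uniform-integrability bookkeeping you describe in order to pass to the limit in $\E_\mu\psi(U_{t\wedge\tau_n},V_{t\wedge\tau_n})$. (One small slip: the Dynkin bound on $\f_1$ gives $\E_\mu\f_1(U_{t\wedge\tau_n},V_{t\wedge\tau_n})\le\mu(\f_1)+Dt$, not $\le\mu(\f_1)$; this does not affect the argument since the bound is still uniform in $n$, and the little-$o$ assumption converts it into the needed UI.) The paper instead works pointwise from a fixed initial condition and forms the combination $\ghat=\f_1+\tfrac{c_2}{\epsilon}\psi$, which is bounded below by some $-K$ because $\psi=o(\f_1)$. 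This lets it simply discard the left-hand side of the Dynkin inequality by the lower bound $-K$, send $n\to\infty$ by monotone/dominated convergence on the right, divide by $t$, and invoke Birkhoff's ergodic theorem to obtain $\int(-c_1 r^{p_1+1}+\tfrac{c_2C_1}{\epsilon}U)\,\mu\ge 0$. The paper's approach avoids UI altogether and as a byproduct yields the quantitative lower bound $\mu(U)\ge\tfrac{\epsilon c_1}{c_2C_1}\int r^{p_1+1}\,\d\mu$; your approach is conceptually more direct and gives the clean bound $\mu(U)\ge\epsilon/C_1$, at the cost of a more delicate limit passage.
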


\begin{remark}
In order to prove Proposition~\ref{prop:Lyapunov:xyz} for our constructed Lyapunov function $\Phi$, we do not actually need to employ Lemma~
\ref{lem:mu_h>0}.  This result is merely an affirmation of the heuristics of Section~\ref{sec:intuition}.     
\end{remark}
We next define an averaging Lyapunov function $\g(u, v )$ under the critical assumption that $\kappa_1\approx\kappa_2$. This is the only place where this assumption is employed. In particular, everything done previously applies so long as $\kappa_2>0$ and $\kappa_1> 0$.  Proving the existence of the relevant subsolutions of the appropriate PDE to yield an averaging Lyapunov function for general $\kappa_1>0$ and $\kappa_2>0$ is beyond our current understanding of the problem.  However, the numerics of Section~\ref{sec:intuition} suggest that it should exist given that the mean of $U$ with respect to $\mu$ should be positive for any choice of $\kappa_1>0$ and $\kappa_2 >0$.   

To define the averaging Lyapunov function, we first fix two constants $J$ and $m$ as follows:
\begin{choices}  The parameter $J$ and $m$ are given by
\begin{equation}\label{cond:J.m}
J=\Big(\frac{\kappa_1+\kappa_2}{2\alpha_h}\Big)^{2/3} \quad\text{and}\quad m=\frac{\kappa_1c_*}{\alpha_h\big(\tfrac{1}{2}-\frac{12c_*}{2-c_*}\big)},
\end{equation}
where $c_*>0$ is a sufficiently small constant to be determined later.  
\end{choices}
Next, let $\lambda_1:\RR\rightarrow \RR$ be a non-decreasing, infinitely differentiable function satisfying
\begin{equation} \label{def:lambda_1}
\lambda_1(x)=\begin{cases}
0,&x\le 1,\\
1,&x\ge 2,
\end{cases}
\end{equation}
and set
\begin{equation} \label{def:psi_1}
\begin{aligned}
\g_1(u, v )&=\begin{cases} \frac{J}{2}-\frac{J}{2}\log(J)-\tfrac{ 1}{2}r^2,&  r ^{2}\le J,\\
-\frac{J}{2}\log( r ^{2}),&  r ^{2}\ge J,
\end{cases}\\
&=:\begin{cases} \psi_{11}(u, v ), & r ^{2} \leq J,\\
\g_{12}(u, v ), &   r ^{2} \geq J,\end{cases}
\end{aligned}
\end{equation}
and
\begin{equation} \label{def:psi_2}
\g_2(u, v )=-m\frac{u}{ r ^{2}}\lambda_1\Big(\frac{ r ^{2}}{J/2}\Big).  
\end{equation}
The averaging Lyapunov function $\g(u, v ,z)$ is then given by the sum of two above functions, i.e.,
\begin{equation} \label{def:psi:average.Lyapunov}
\g=\g_1+\g_2.
\end{equation}
Later in Lemma~\ref{lem:averaging.Lyapunov}, we shall prove that under the crucial condition $\kappa_1\approx\kappa_2$, $\g$ defined above is indeed an averaging Lyapunov function as in Definition~\ref{def:averaging.Lyapunov}.  Furthermore, we will show that when combined with the outer function $\fO$ in the appropriate manner, we can exhibit a Lyapunov function $\Phi$, which when transformed back to the coordinates $(x,y,z)$, will satisfy the conclusions of Proposition~\ref{prop:Lyapunov:xyz}.

\subsection{Definition of $\Phi$}

Having now defined an averaging Lyapunov function $\g$, we are now in a position to define a Lyapunov candidate $\Phi$ in the variables $(u,v,z)$.  
\begin{definition} \label{def:Phi}
Let $\Phi_\emph{O}$ and $\g$ be respectively as in Definition~\ref{def:f_O} and~\eqref{def:psi:average.Lyapunov}. The function $ \Phi$ is given by
\begin{equation} \label{eqn:Phi}
\Phi(u, v ,z)=\lambda_1(\tfrac{r}{r_*})\Phi_\emph{O}(u, v ,z)+D\g(u, v )-\frac{D\alpha_hJ}{1-h}\log(z) + A z,
\end{equation}
where $D>0$ and $A>0$ will be determined later (cf.~\eqref{cond:D} and \eqref{cond:A}, respectively) and $\alpha_h$ and $J$ are, respectively, the constants in~\eqref{def:alpha} and~\eqref{cond:J.m}, and $\lambda_1$ is the smooth cut-off function as in~\eqref{def:lambda_1}. In the above, whenever $\lambda_1=0$, we assume that the corresponding function $\lambda_1(\tfrac{r}{2r_*})\Phi_\emph{O}(u, v ,z)=0$.  \end{definition}

Having now defined $\Phi$, we next turn to establishing various Lyapunov-type bounds which will be used to prove Proposition~\ref{prop:Lyapunov:xyz} later.  
\section{Local Lyapunov bounds} \label{sec:Lyapunov:proof}
In this section, we start working towards a proof of Proposition~\ref{prop:Lyapunov:xyz} by establishing various local Lyapunov-type estimates for each locally-defined function introduced in the previous section.  This means that we need to establish a number of bounds with respect to both the interior operator $\L$ as in~\eqref{generator:L:uvz} and the boundary operator $\Q$ as in~\eqref{generator:Q:uvz}.  We proceed region-by-region starting with each subregion in the outer domain $\RO$ in Section~\ref{sec:outer:lyapprop} and then proceed to do the same for the inner region $\RI$ in Section~\ref{sec:inner:lyapprop}.

 \subsection{Local Lyapunov bounds in the outer region $\RO$} \label{sec:outer:lyapprop}

Here we proceed region by region, starting with $\f_{0,i}$, $i=1,2$, in the initial outer region $\R_0$.

\begin{lemma}\label{lem:R0:Lyapunov} Let $\f_{0,i}$, $i=1,2$ be given as in~\eqref{form:R0:phi0}. Then for all $r_*>0$ large enough, we have
\begin{align}
\label{eqn:R0est1}
\L\f_{0,i}(u,v,z)\le -\gamma p_i r^{p_i}z^{q_i} -\frac{\beta_i}{2\sqrt{1+C^2}}r^{p_i+1}z^{q_i-2/3},
\end{align}
for all $(u,v,z) \in \R_0$ and 
\begin{align}
\label{eqn:R0est2}
\Q\big(\f_{0,1}+\f_{0,2}\big)(u,v,1)\leq  -(\tfrac{q_2}{2}-\tfrac{p_2}{6}) r^{p_2},\end{align} 
for all $(u,v,1) \in \R_0$.  
\end{lemma}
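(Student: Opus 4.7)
The plan is to compute $\L\f_{0,i}$ and $\Q\f_{0,i}$ directly and then exploit the geometric restriction $Cu \geq |v|$ in $\R_0$ together with the parameter choices in Choice~\ref{cond:pq}. Differentiating $\f_{0,i} = r^{p_i} z^{q_i}$ gives $\partial_u \f_{0,i} = p_i u r^{p_i-2} z^{q_i}$, $\partial_v \f_{0,i} = p_i v r^{p_i-2} z^{q_i}$, $\partial_z \f_{0,i} = q_i r^{p_i} z^{q_i-1}$, and second derivatives of uniform order $r^{p_i-2} z^{q_i}$. Substituting into~\eqref{generator:L:uvz}, the Stokes drag $-\gamma u\partial_u - \gamma v \partial_v$ contributes exactly $-\gamma p_i r^{p_i} z^{q_i}$. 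The two nonlinear transport pieces collapse via the algebraic identity
$$-p_i\bigl[u(\alpha_h u^2 - v^2) + (\alpha_h+1) u v^2\bigr] = -p_i \alpha_h u(u^2+v^2) = -p_i \alpha_h u r^2,$$
and so contribute $-p_i \alpha_h u r^{p_i} z^{q_i-2/3}$; combined with $(1-h)u z^{1/3}\partial_z \f_{0,i} = (1-h)q_i u r^{p_i} z^{q_i-2/3}$, the full transport piece reduces to $-[\alpha_h p_i - (1-h)q_i] u r^{p_i} z^{q_i-2/3} = -\beta_i u r^{p_i}z^{q_i-2/3}$ by the definition of $\beta_i$ in~\eqref{cond:alphai.betai}. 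Using $u^2, v^2 \leq r^2$, the two diffusion terms are bounded by a constant multiple of $(\kappa_1+\kappa_2) r^{p_i-2} z^{q_i-2/3}$.

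For~\eqref{eqn:R0est1}, the condition $Cu \geq |v|$ combined with $r^2 = u^2+v^2 \leq (1+C^2) u^2$ forces $u \geq r/\sqrt{1+C^2}$, so the transport piece is bounded above by $-\tfrac{\beta_i}{\sqrt{1+C^2}} r^{p_i+1} z^{q_i-2/3}$. Since the diffusion remainder is of strictly lower radial order $r^{p_i-2} z^{q_i-2/3}$, choosing $r_* \gg \max\{C,\eta_*,\gamma\}$ as in~\eqref{cond:r*.eta*} absorbs it into half of this leading term, delivering the claimed bound.

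For~\eqref{eqn:R0est2}, applying $\Q$ from~\eqref{generator:Q:uvz} to $\f_{0,i}$ yields $\Q\f_{0,i} = (\tfrac{p_i}{3}-q_i) r^{p_i} z^{q_i-1}$, so at $z=1$ with $q_1=0$,
$$\Q(\f_{0,1}+\f_{0,2})(u,v,1) = \frac{p_1}{3} r^{p_1} + \Big(\frac{p_2}{3}-q_2\Big) r^{p_2}.$$
The constraint $q_2 > \tfrac{p_2}{3}+\tfrac{\alpha_2}{2}$ in~\eqref{cond:p2.q2} makes $\tfrac{q_2}{2}-\tfrac{p_2}{6} > \tfrac{\alpha_2}{4} > 0$, and $p_2 > p_1$ lets the $r^{p_1}$ term be absorbed into the resulting slack for $r \geq r_*$ sufficiently large. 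The main subtlety of the whole argument is purely bookkeeping the transport cancellation above: the three coefficients $\alpha_h$, $\alpha_h+1$, and $1-h$ must conspire to leave exactly $\beta_i = \alpha_h p_i - (1-h)q_i$, which is precisely what~\eqref{cond:alphai.betai} in Choice~\ref{cond:pq} is set up to enforce.
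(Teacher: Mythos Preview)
Your proof is correct and follows essentially the same approach as the paper's own proof: direct computation of $\L\f_{0,i}$, identification of the transport term $-\beta_i u r^{p_i}z^{q_i-2/3}$ via the same algebraic cancellation, the geometric bound $u\ge r/\sqrt{1+C^2}$ on $\R_0$, and absorption of the lower-order diffusion remainder by taking $r_*$ large. The treatment of $\Q(\f_{0,1}+\f_{0,2})$ is likewise identical, using $p_2>p_1$ and $q_2>\tfrac{p_2}{3}$ to absorb the $r^{p_1}$ term.
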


\begin{proof} 
We apply $\L$ as in~\eqref{generator:L:uvz} to $\f_{0,i}$ on $\R_0$ to obtain 
\begin{align*}
\L\f_{0,i}(u,v,z) &= -\gamma p_i r^{p_i}z^{q_i} - \beta_i u r^{p_i}z^{q_i-2/3} + (\kappa_1+\kappa_2)p_i r^{p_i-2}z^{q_i-2/3}\\
&\qquad+ p_i(p_i-2)(\kappa_1 u^2+\kappa_2 v^2)r^{p_i-4}z^{q_i-2/3}\\
&\leq -\gamma p_i r^{p_i}z^{q_i} - \beta_i u r^{p_i}z^{q_i-2/3} + (\kappa_1+\kappa_2)p_i(1+|p_i-2|)r^{p_i-2}z^{q_i-2/3}.
\end{align*}
Since $Cu\geq |v|$ and $r\geq r_*$ on $\R_0$, we have that 
\begin{align*}
-  u r^{p_i} \leq -  \frac{ r^{p_i+1}}{\sqrt{1+C^2}} \qquad \text{ and } \qquad
(\kappa_1+\kappa_2)p_i(1+|p_i-2|)r^{p_i-2}\leq  \frac{c}{r_*^3} r^{p_i+1},
\end{align*}
on $\R_0$.  
In the above $c=c(\kappa_1,\kappa_2, p_i)>0$. By picking $r_*$ large enough, the estimate~\eqref{eqn:R0est1} readily follows.

To check the claimed boundary property in $\R_0$, note that for any $(u,v,1) \in \R_0$ we have 
\begin{align*}
\Q(\f_{0,1}+ \f_{0,2})(u,v,1)=\big(\frac{u}{3z}\partial_u+\frac{v}{3z}\partial_v-\partial_z\big)\big(\f_{0,1}+\f_{0,2}\big)(u,v,1)&=-(q_2-\tfrac{1}{3} p_2)r^{p_2}+\tfrac{1}{3} p_1r^{p_1}\\
&\leq -(q_2 -\tfrac{1}{3}p_2 -\tfrac{1}{3 r_*^{p_2-p_1}}p_1) r^{p_2}.  
\end{align*}
Recalling that $q_2>\frac{1}{3}p_2$ and $p_2>p_1$ by Choice~\ref{cond:pq}, picking $r_*$ large enough, the second estimate~\eqref{eqn:R0est2} follows. 
\end{proof}

Next, we establish similar estimates for $\f_{1,i}$ in region $\R_1$.  
\begin{lemma} \label{lem:R1:Lyapunov} Let $\f_{1,i}$, $i=1,2,$ be given as in~\eqref{eqn:R1:phi1}. Then for all $r_*$ large enough we have
\begin{align}
\label{eqn:R1est1}
\L \f_{1,i}(u,v,z) \leq - \tfrac{c_{1,i}}{2}r^{p_i+1} z^{q_i} \Big|\frac{r}{v}\Big|^{\alpha_i},
\end{align}
for all $(u,v,z) \in \R_1$ and
\begin{align}
\label{eqn:R1est2}
\Q(\f_{1,1}+ \f_{1,2})(u,v,1) \leq -(\tfrac{q_2}{2}-\tfrac{p_2}{6})r^{p_2},\end{align}
for all $(u,v,1) \in \R_1$.
\end{lemma}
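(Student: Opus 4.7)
The plan is to bound each contribution to $\L \f_{1,i}$ using the explicit formula~\eqref{eqn:R1:phi1} together with the defining PDE~\eqref{eqn:R1:T1:boundaryproblem}, and to exploit homogeneity for the boundary bound. I will decompose~\eqref{generator:L:uvz} as
\[
\L = z^{-2/3}\bigl(\T_1 + \kappa_1 \partial_u^2 + \kappa_2 \partial_v^2\bigr) - \gamma u \partial_u - \gamma v \partial_v,
\]
so that the leading contribution can be read off directly:
\[
z^{-2/3}\,\T_1 \f_{1,i} = -c_{1,i}\, r^{p_i+1}\, z^{q_i - 2/3}\, |r/v|^{\alpha_i}.
\]
The remaining task for the interior estimate is to show that the first-order drift piece and the second-order noise piece are strictly lower order on $\R_1$.

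The key structural observation is that~\eqref{eqn:R1:phi1} factors as $\f_{1,i}(u,v,z) = z^{q_i}\,\Theta_i(u,v)$, with $\Theta_i$ positively homogeneous of degree $p_i$ in $(u,v)$; since $u/|v|\in[-C,C^{-1}]$ with $v\neq 0$ throughout $\R_1$, one may further write $\Theta_i(u,v)=|v|^{p_i}\,\psi_i(u/|v|)$ for a smooth $\psi_i$ on the compact interval $[-C,C^{-1}]$, whence $\partial_u^j\partial_v^k\f_{1,i} = O(|v|^{p_i-j-k})\,z^{q_i}$ with constants depending only on $p_i,\alpha_i,\beta_i,C$. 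Using $|v|\geq r/\sqrt{1+C^2}$ and $|r/v|\geq 1$ on $\R_1$, together with $z^{q_i}\leq z^{q_i-2/3}$ on $z\leq 1$, the drift and noise remainders obey
\[
\bigl|(-\gamma u \partial_u - \gamma v \partial_v)\f_{1,i}\bigr| = O(r^{p_i}\,z^{q_i}), \qquad \bigl|z^{-2/3}(\kappa_1 \partial_u^2 + \kappa_2 \partial_v^2)\f_{1,i}\bigr| = O(r^{p_i-2}\,z^{q_i-2/3}),
\]
each of which is controlled by $c\,r_*^{-1}$ times the magnitude of the leading term. Choosing $r_*$ large absorbs both into half of the leading contribution, and a final appeal to $z^{q_i-2/3}\geq z^{q_i}$ yields~\eqref{eqn:R1est1}.

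For the boundary estimate~\eqref{eqn:R1est2}, I will use homogeneity of $\Theta_i$ via Euler's identity, which gives $u\partial_u \f_{1,i}+v\partial_v \f_{1,i} = p_i \f_{1,i}$ together with $\partial_z \f_{1,i} = q_i \f_{1,i}/z$. Evaluating~\eqref{generator:Q:uvz} at $z=1$ then produces the clean identity
\[
\Q \f_{1,i}(u,v,1) = \bigl(\tfrac{p_i}{3} - q_i\bigr)\,\f_{1,i}(u,v,1).
\]
Inspection of~\eqref{eqn:R1:phi1} shows that $\f_{1,i}(u,v,1)/r^{p_i}$ depends only on $w=u/|v|\in[-C,C^{-1}]$, is continuous and strictly positive there, so $c_i\, r^{p_i} \leq \f_{1,i}(u,v,1)\leq C_i\, r^{p_i}$ on $\R_1$ for constants $0<c_i\leq C_i$. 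Since $q_1=0$, $\Q\f_{1,1}(u,v,1)=(p_1/3)\f_{1,1}(u,v,1)$ is a positive $O(r^{p_1})$ term; for $i=2$, Choice~\ref{cond:pq} gives $q_2-p_2/3>\alpha_2/2>0$, so $\Q\f_{1,2}(u,v,1)\leq -c_2(q_2-p_2/3)\,r^{p_2}$. Summing and exploiting $p_2>p_1$, the $O(r^{p_1})$ contribution is absorbed into half of the $r^{p_2}$ contribution once $r_*$ is large, giving~\eqref{eqn:R1est2}.

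The main bookkeeping burden will be the uniform positive lower bound for $\f_{1,2}(u,v,1)/r^{p_2}$ on the compact slice $w\in[-C,C^{-1}]$; beyond that the argument is a routine absorption of lower-order contributions by the strictly negative leading term and closely parallels the treatment of $\R_0$ in Lemma~\ref{lem:R0:Lyapunov}.
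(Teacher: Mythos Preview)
Your proof is correct and follows the same strategy as the paper's: decompose $\L=z^{-2/3}(\T_1+\kappa_1\partial_u^2+\kappa_2\partial_v^2)-\gamma u\partial_u-\gamma v\partial_v$, read off the leading term from~\eqref{eqn:R1:T1:boundaryproblem}, and show the remaining drift and diffusion pieces are lower order on $\R_1$ for $r_*$ large. The only difference is one of execution: where the paper differentiates each piece of~\eqref{eqn:R1:phi1} explicitly, you invoke the positive homogeneity of $\f_{1,i}/z^{q_i}$ in $(u,v)$ and Euler's identity for $\Q$ to obtain the same bounds more cleanly---in particular, your identity $\Q\f_{1,i}(u,v,1)=(\tfrac{p_i}{3}-q_i)\f_{1,i}(u,v,1)$ is exactly what the paper's explicit computation produces term-by-term.
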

\begin{proof}

We first establish~\eqref{eqn:R1est1}.  In view of~\eqref{generator:L:uvz} and \eqref{eqn:R1:T1}, we see that on $\R_1$
\begin{align*}
\L \f_{1,i}& = z^{-2/3}\T_1\f_{1,i}+(\L-z^{-2/3}\T_1)\f_{1,i}\\
&=z^{-2/3}\big(\T_1\f_{1,i}+\kappa_1\partial_{u}^2\f_{1,i}+\kappa_2\partial_{v}^2\f_{1,i}\big)-\gamma u\partial_u\f_{1,i}-\gamma v\partial_v\f_{1,i}.
\end{align*}
Recalling the boundary problem~\eqref{eqn:R1:T1:boundaryproblem}, we readily have 
\begin{align} \label{eqn:R1:T1:RHS}
\T_1\f_{1,i}=-c_{1,i}r^{p_i+1}z^{q_i}\Big|\frac{r}{v}\Big|^{\alpha_i},
\end{align} 
on $\R_1$. 
Concerning the first term on the righthand side of~\eqref{eqn:R1:phi1}, we see that on $\R_1$
\begin{equation} \label{eqn:R1:u.partial_u+v.partial_v.r^p.z^q}
\begin{aligned}
\MoveEqLeft[10](-\gamma u\partial_u-\gamma v\partial_v)\frac{r^{p_i}z^{q_i}}{(C^{-2}+1)^{\beta_i/2}}\Big|\frac{r}{v}\Big|^{\beta_i}=-\frac{\gamma\, p_i}{(C^{-2}+1)^{\beta_i/2}}r^{p_i}z^{q_i}\Big|\frac{r}{v}\Big|^{\beta_i},
\end{aligned}
\end{equation}
and
\begin{align*}
\partial_{u}^2\big(r^{p_i}\Big|\frac{r}{v}\Big|^{\beta_i}\big)& = (p_i+\beta_i) r^{p_i-2}\Big|\frac{r}{v}\Big|^{\beta_i} +(p_i+\beta_i)(p_i+\beta_i-2) r^{p_i-4}\Big|\frac{r}{v}\Big|^{\beta_i}u^2,\\
\partial_{v}^2\big(r^{p_i}\Big|\frac{r}{v}\Big|^{\beta_i}\big)
& = (p_i+\beta_i) r^{p_i-2}\Big|\frac{r}{v}\Big|^{\beta_i} +(p_i+\beta_i )(p_i+\beta_i -2) r^{p_i-4}\Big|\frac{r}{v}\Big|^{\beta_i}v^2\\
&\qquad +\beta_i(\beta_i+1) r^{p_i}\Big|\frac{r}{v}\Big|^{\beta_i}\frac{1}{v^2}  -2 (p_i+\beta_i )\beta_i r^{p_i-2}\Big|\frac{r}{v}\Big|^{\beta_i}.
\end{align*}
Combining the previous two identities, we infer the existence of a constant $c=c(h)>0$ such that on $\R_1$
\begin{align*}
\big(\kappa_1\partial_{u}^2+\kappa_2\partial_{v}^2\big)r^{p_i}\Big|\frac{r}{v}\Big|^{\beta_i}\leq c (\kappa_1+\kappa_2)\Big(r^{p_i-2}+\frac{r^{p_i}}{v^2}\Big)\Big|\frac{r}{v}\Big|^{\beta_i}.
\end{align*}
We note that in $\R_1$, $|u|\leq C|v|$ implying $v^2\geq r^2/(C^2+1)$. Together with $r \geq r_*$, it follows from the above inequality that
\begin{align*}
\big(\frac{\kappa_1}{z^{2/3}}\partial_{u}^2+\frac{\kappa_2}{z^{2/3}}\partial_{v}^2\big)\frac{r^{p_i}z^{q_i}}{(C^{-2}+1)^{\beta_i/2}}\Big|\frac{r}{v}\Big|^{\beta_i} &\leq c(\kappa_1+\kappa_2)C^2 r^{p_i-2}z^{q_i-2/3}\Big|\frac{r}{v}\Big|^{\beta_i}\\
& \leq \frac{c}{r_*^3}r^{p_i+1}z^{q_i-2/3}\Big|\frac{r}{v}\Big|^{\beta_i}.
\end{align*}
which is clearly subsumed to the right-hand side of~\eqref{eqn:R1:T1:RHS} thanks to condition $\alpha_i>\beta_i$, cf.~\eqref{cond:alphai.betai}, and the facts that $|r/v|\geq 1$ and $r_*$ is large.

For the second term on the righthand side of~\eqref{eqn:R1:phi1}, a short calculation shows that on $\R_1$
\begin{equation} \label{eqn:R1:u.partial_u+v.partial_v.r^p.z^q.int_0^T_1dt}
\begin{aligned}
\MoveEqLeft[5](-\gamma u\partial_u-\gamma v\partial_v)c_{1,i} r^{p_i}z^{q_i}\Big|\frac{r}{v}\Big|^{\beta_i}\int_{u/|v|}^{1/C}(t^2+1)^{\frac{\alpha_i-\beta_i-1}{2}}\d t \\&=- \gamma\, c_{1,i}\, p_i\, r^{p_i}z^{q_i}\Big|\frac{r}{v}\Big|^{\beta_i}\int_{u/|v|}^{1/C}(t^2+1)^{\frac{\alpha_i-\beta_i-1}{2}}\d t.
\end{aligned}
\end{equation}
Concerning the second-derivative terms, we also have the following estimate on $\R_1$ for $c=c(\kappa_1,\kappa_2,C,p_i)$
\begin{align*}
\MoveEqLeft[5]z^{-2/3}\big(\kappa_1\partial_{u}^2+\kappa_2\partial_{v}^2\big)c_{1,i}\, r^{p_i}z^{q_i}\Big|\frac{r}{v}\Big|^{\beta_i}\int_{u/|v|}^{1/C}(t^2+1)^{\frac{\alpha_i-\beta_i-1}{2}}\d t\\
&\le c\,c_{1,i} r^{p_i-2}z^{q_i-2/3}\Big|\frac{r}{v}\Big|^{\alpha_i} ,
\end{align*}
where in the above estimate, we have employed the facts that for every $(u,v,z)\in \R_1$, $r^2/|v|^2\le C^2+1$ and that 
\begin{equation} \label{ineq:int_(-C)^(1/C)(t^2+1)dt}
\int_{u/|v|}^{1/C}(t^2+1)^{\frac{\alpha_i-\beta_i-1}{2}}\d t\leq 2C.
\end{equation}
Using a similar reasoning as before, since $r\geq r_*$, it holds that
\begin{align*}
z^{-2/3}\big(\kappa_1\partial_{u}^2+\kappa_2\partial_{v}^2\big)c_{1,i}\, r^{p_i}z^{q_i}\Big|\frac{r}{v}\Big|^{\beta_i}\int_{u/|v|}^{1/C}(t^2+1)^{\frac{\alpha_i-\beta_i-1}{2}}\d t\le \frac{c}{r_*^3} r^{p_i+1}z^{q_i-2/3}\Big|\frac{r}{v}\Big|^{\alpha_i}.
\end{align*}
Collecting these estimates we arrive at~\eqref{eqn:R1est1} for all $r_*$ large enough. 

To obtain the second estimate~\eqref{eqn:R1est2}, using the identities~ \eqref{eqn:R1:u.partial_u+v.partial_v.r^p.z^q}, \eqref{eqn:R1:u.partial_u+v.partial_v.r^p.z^q.int_0^T_1dt}, we find that for all $(u,v,1) \in \R_1$: 
\begin{align*}
&\Q(\f_{1,1}+ \f_{1,2})(u,v,1)\\
 &=\big(\frac{u}{3z}\partial_u+\frac{v}{3z}\partial_v-\partial_z\big)\big(\f_{1,1}+\f_{1,2}\big) \big|_{\{z=1\}}\\
&= -(q_2-\tfrac{1}{3}p_2) \frac{ r^{p_2}}{(C^{-2}+1)^{\beta_2/2}}\Big|\frac{r}{v}\Big|^{\beta_2}  -(q_2-\tfrac{1}{3}p_2)c_{1,2}\, r^{p_2}\Big|\frac{r}{v}\Big|^{\beta_2}\int_{u/|v|}^{1/C}(t^2+1)^{\frac{\alpha_2-\beta_2-1}{2}}\d t\\
&\qquad+ \tfrac{1}{3}p_1\frac{r^{p_1}}{(C^{-2}+1)^{\beta_1/2}}\Big|\frac{r}{v}\Big|^{\beta_1}  +\tfrac{1}{3}p_1c_{1,1}\, r^{p_1}\Big|\frac{r}{v}\Big|^{\beta_1}\int_{u/|v|}^{1/C}(t^2+1)^{\frac{\alpha_1-\beta_1-1}{2}}\d t\\
&\leq -\tfrac{1}{2}(q_2-\tfrac{1}{3} p_2)r^{p_2}.
\end{align*}
In the last implication above, we employed the fact that in $\R_1$, $1<r/|v|<C+1$ and $r\ge r_*$ which is taken to be sufficiently large. This establishes~\eqref{eqn:R1est2}, thus concluding the proof.
 
\end{proof}

We now turn to the region $\R_2$.

\begin{lemma} \label{lem:R2:Lyapunov} 
Let $\f_{2,i}$, $i=1,2,$ be given as in~\eqref{eqn:R2:phi2}. Then for all $r_*\gg C=C(h)>0$ large enough, \begin{align}
\label{eqn:R2est1}
\L \f_{2,i}(u,v,z) \leq - \tfrac{c_{2,i}}{2}|u|^{p_i+1}z^{q_i-2/3}\Big|\frac{u}{v}\Big|^{\alpha_i},
\end{align} 
for all $(u,v,z) \in \R_2$ and
\begin{align}
\label{eqn:R2est2}
\Q(\f_{2,1}+ \f_{2, i})(u,v,1) \leq - (\tfrac{q_2}{2}-\tfrac{p_2}{6} )|u|^{p_2},
\end{align}  
for all $(u,v,1) \in \R_2$.  
\end{lemma}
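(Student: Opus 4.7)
The plan is to run the same style of argument used in Lemmas~\ref{lem:R0:Lyapunov} and~\ref{lem:R1:Lyapunov}: decompose $\L=z^{-2/3}\M$, peel off the first-order part $\T_2$ for which the PDE~\eqref{eqn:R2:T2:boundaryproblem} gives an exact negative identity, and then show that every remaining term in $(\M-\T_2)\f_{2,i}$ is dominated by a fraction of $-\tfrac12 c_{2,i} |u|^{p_i+1} z^{q_i}|u/v|^{\alpha_i}$ once $r_*\gg C$. Writing the residual explicitly,
\begin{equation*}
(\M-\T_2)\f_{2,i}= v^2\partial_u\f_{2,i} - \gamma u z^{2/3}\partial_u\f_{2,i}-\gamma v z^{2/3}\partial_v\f_{2,i}+\kappa_1\partial_u^2\f_{2,i}+\kappa_2\partial_v^2\f_{2,i},
\end{equation*}
and recalling the explicit formula~\eqref{eqn:R2:phi2}, I would differentiate term-by-term on the two summands $|u|^{p_i+\beta_i}|v|^{-\beta_i}z^{q_i}$ and $|u|^{p_i+\alpha_i}|v|^{-\alpha_i}z^{q_i}$ and estimate each contribution.

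The key geometric facts from the definition of $\R_2$ are: (i) $|v|\leq |u|/C$ so $|u|\geq r_*/\sqrt{1+C^{-2}}$; (ii) $|u|^{1/2}|v|\geq \eta_*$; and (iii) $|u/v|\geq C$. Fact (iii) and $\alpha_i<1$ make the $v^2\partial_u$ term comparable to $|u|^{p_i+1}|u/v|^{\alpha_i}z^{q_i}/C^{2-\alpha_i}$, which is dominated by $c_{2,i}|u|^{p_i+1}|u/v|^{\alpha_i}z^{q_i}$ once $C=C(h)$ is chosen large enough (since $c_{2,i}=\beta_i/(8C)$, we need a power of $C$ rather than $C^2$, hence the statement $r_*\gg C=C(h)$). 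Fact (i) together with $z\leq 1$ handles the $\gamma$-drifts: $z^{-2/3}\cdot\gamma uz^{2/3}\partial_u\f_{2,i}$ is of order $|u|^{p_i}z^{q_i}|u/v|^{\alpha_i}$, and picking up an extra factor $|u|\geq r_*/\sqrt{2}$ beats the target after enlarging $r_*$. The $\kappa_1\partial_u^2$ piece loses two powers of $|u|$ so is controlled similarly by $r_*$. The most delicate term is $\kappa_2\partial_v^2\f_{2,i}$: it creates a factor $|v|^{-2}$, and its ratio to the target is $\kappa_2/(|u| v^2)$; using (ii) this is at most $\kappa_2/(\eta_*^2)=1/C^2$ (by Choice~\eqref{cond:r*.eta*}), so it is absorbed by the $1/C$ in $c_{2,i}$ once $C$ is large.

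For the boundary estimate~\eqref{eqn:R2est2} I would evaluate $\Q$ on each summand at $z=1$. Because $\f_{2,i}$ is $(p_i+\beta_i)$-homogeneous in $(|u|,1)$ in the $\beta_i$ piece and $(p_i+\alpha_i)$-homogeneous in the $\alpha_i$ piece, while being $(-\beta_i)$- and $(-\alpha_i)$-homogeneous in $|v|$, a direct calculation gives
\begin{equation*}
\Big(\tfrac{u}{3}\partial_u+\tfrac{v}{3}\partial_v\Big)\f_{2,i}\big|_{z=1}=\tfrac{p_i}{3}\f_{2,i}\big|_{z=1},\qquad -\partial_z\f_{2,i}\big|_{z=1}=-q_i\,\f_{2,i}\big|_{z=1},
\end{equation*}
so $\Q\f_{2,i}(u,v,1)=(\tfrac{p_i}{3}-q_i)\f_{2,i}(u,v,1)$. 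For $i=2$, Choice~\ref{cond:pq} gives $q_2-\tfrac{p_2}{3}>\tfrac{\alpha_2}{2}$, so this contribution is $\leq -(\tfrac{q_2}{2}-\tfrac{p_2}{6}+\tfrac{\alpha_2}{4})\f_{2,2}(u,v,1)$, and since $|u/v|\geq C\geq 1$ this is $\leq -(\tfrac{q_2}{2}-\tfrac{p_2}{6}+\tfrac{\alpha_2}{4})|u|^{p_2}$ up to constants. For $i=1$ we have $q_1=0$, so $\Q\f_{2,1}$ is positive but only of order $|u|^{p_1}|u/v|^{\alpha_1}$; the last clause of Choice~\ref{cond:pq}, $p_2+\tfrac{3}{2}\alpha_2>p_1+\tfrac{3}{2}\alpha_1$, ensures that after summing the $i=1$ contribution is absorbed by the $\alpha_2/4$ slack above provided $r_*$ is large (so that $|u|$ is large).

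The main obstacle I expect is the $\kappa_2\partial_v^2$ estimate, since near the interface $|u|^{1/2}|v|=\eta_*$ (the boundary with $\R_3$) this term is exactly of the same order as the leading $\T_2$-contribution, and only the precise calibration $\eta_*=C\sqrt{\kappa_2}$ with $C\gg 1$ gives the $1/C^2$ headroom needed to close the bound. Keeping track of which constants depend on $C$ versus $r_*$ (so that the two thresholds can be chosen consistently with $r_*\gg C$) is the bookkeeping price; everything else is a routine power-counting exercise.
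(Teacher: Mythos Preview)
Your proposal is correct and follows essentially the same approach as the paper: decompose $\L=z^{-2/3}\T_2+\text{(remainder)}$, use the PDE~\eqref{eqn:R2:T2:boundaryproblem} for the exact negative contribution, and then bound the four residual terms using precisely the three geometric facts you list (the paper denotes these estimates $I_1,I_2,I_3$ and handles the $\gamma$-drift as $-I_0\leq 0$). Your identification of $\kappa_2\partial_v^2$ as the delicate term controlled by $\kappa_2/(|u|v^2)\leq \kappa_2/\eta_*^2=1/C^2$ matches the paper exactly, as does the boundary computation $\Q\f_{2,i}(u,v,1)=(\tfrac{p_i}{3}-q_i)\f_{2,i}(u,v,1)$; one minor slip is that $c_{2,i}=\beta_i/(4C)$, not $\beta_i/(8C)$.
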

\begin{proof}

Recalling~\eqref{generator:L:uvz} and \eqref{def:T2}, we note that
\begin{align*}
\L=z^{-2/3}\big(\T_2+v^2\partial_u+\kappa_1\partial_{u}^2+\kappa_2\partial_{v}^2\big)-\gamma u \partial_u-\gamma v\partial_v.
\end{align*}
By~\eqref{eqn:R2:T2:boundaryproblem}, we readily see
\begin{align} \label{ineq:R2:z^(-2/3)T_2.phi_(2,i)=-c_(2,i)|u|^(p_i+1)z^(q_i-2/3)|u/v|^(alpha_i)}
z^{-2/3}\T_2\f_{2,i}=-c_{2,i}|u|^{p_i+1}z^{q_i-2/3}\Big|\frac{u}{v}\Big|^{\alpha_i}.
\end{align}
Note first that if $\alpha=\alpha_i$ or $\alpha=\beta_i$ we have on $\R_2$ 
\begin{align*}
(\L-z^{-2/3}\T_2)|u|^{p_i}z^{q_i}\Big|\frac{u}{v}\Big|^{\alpha}
&=-\gamma p_i|u|^{p_i}z^{q_i}\Big|\frac{u}{v}\Big|^{\alpha}+(p_i+\alpha)\text{sgn}(u)|u|^{p_i-1}z^{q_i-2/3}\Big|\frac{u}{v}\Big|^{\alpha}v^2\\
&\qquad +\kappa_1(p_i+\alpha)(p_i+\alpha-1)|u|^{p_i-2}z^{q_i-2/3}\Big|\frac{u}{v}\Big|^{\alpha}\\
&\qquad+\kappa_2\alpha(\alpha+1)\frac{|u|^{p_i}}{v^2}z^{q_i-2/3}\Big|\frac{u}{v}\Big|^{\alpha}\\
&=:-I_0+I_1+I_2+I_3.
\end{align*}
To bound $I_1$, we use the fact that in $\R_2$ we have $|v|\le |u|/C$ and $|u/v|>1$ so that since $\alpha_i > \beta_i$
\begin{align*}
I_1\leq (p_i+\alpha)|u|^{p_i-1}z^{q_i-2/3}\Big|\frac{u}{v}\Big|^{\alpha}v^2\le \frac{p_i+\alpha}{C^2}|u|^{p_i+1}z^{q_i-2/3}\Big|\frac{u}{v}\Big|^{\alpha_i}.
\end{align*}
Also, $\sqrt{1+C^{-2}} |u|\ge r\ge r_*$ implies that
\begin{align*}
I_2&=\kappa_1(p_i+\alpha)(p_i+\alpha-1)|u|^{p_i-2}z^{q_i-2/3}\Big|\frac{u}{v}\Big|^{\alpha}\le \frac{c}{r_*^3}|u|^{p_i+1}z^{q_i-2/3}\Big|\frac{u}{v}\Big|^{\alpha_i}.
\end{align*}
Concerning $I_3$, we note that $|u|v^2\ge \eta_*^2=C^2\kappa_2$, cf.~\eqref{cond:r*.eta*}, which yields the bound
\begin{align*}
I_3=\kappa_2\alpha(\alpha+1)\frac{|u|^{p_i}}{v^2}z^{q_i-2/3}\Big|\frac{u}{v}\Big|^{\alpha}\le \frac{\alpha(\alpha+1)}{C^2}|u|^{p_i+1}z^{q_i-2/3}\Big|\frac{u}{v}\Big|^{\alpha_i}.
\end{align*}
Recalling the form of $\f_{2,i}$ in~\eqref{eqn:R2:phi2} and the choice of constants~\eqref{eqn:R2:constantB2}, we collect these estimates to find that 
\begin{align*}
(\L-z^{-2/3}\T_2)\f_{2,i} \leq \frac{c (A_{2,i}+ c_{2,i})}{C^2} |u|^{p_i+1} z^{q_i-2/3} \bigg|\frac{u}{v}\bigg|^{\alpha_i},
\end{align*}
where $c>0$ is a constant that only depends on $h$.  Recalling $A_{2,i}$ as in~\eqref{eqn:R2:constantA2} and  $c_{2,i}=c_{1,i}/2$ by~\eqref{cond:R2:c2}, we note that for all $C=C(h)>0$ sufficiently large
\begin{align}\label{ineq:R2:A_2<1+c_1.C^(alpha-beta)}
A_{2,i}\le 2 +\frac{2 c_{2,i}}{(\alpha_i - \beta_i)}C^{\alpha_i-\beta_i}. 
\end{align}
Thus, since $\alpha_i, \beta_i \in (0,1)$, for all $C=C(h)>0$ large enough, we obtain the first claimed estimate~\eqref{eqn:R2est1}. 

To check the second estimate~\eqref{eqn:R2est2}, note that in $\R_2$, recall $|u|\geq C|v|$ and thus $(1+C^{-2})|u|^2\geq r^2\ge r_*^2$. It follows that on $\{z=1\}$, we also have the asymptotic bound on $\R_2$ for all $r_*>0$ large enough
\begin{align*}
\Q\f_2(u,v,1)=\big(\frac{u}{3z}\partial_u+\frac{v}{3z}\partial_v-\partial_z\big)\big(\f_{2,1}+\f_{2,2}\big) \big|_{\{z=1\}}\leq  -\tfrac{1}{2}(q_2-\tfrac{1}{3} p_2)|u|^{p_2}.
\end{align*}
This gives~\eqref{eqn:R2est2}.

\end{proof}

The final subregion in the outer region $\RO$ is the most challenging since, compared with the previous regions, the formulas for the local functions $\f_{3,i}$ are not explicit due to the presence of the Laplace transforms in~\eqref{eqn:R3:phi3}.  To deal with these terms, however, we appeal to~\cite[Lemma 7.4]{herzog2015noise} which analyzes various properties of these functions.           

To help simplify notation, let 
\begin{equation} \label{eq:R3:G:mean}
G_s(\eta)=\E_\eta\big[e^{sT_3}\big].
\end{equation}
By~\cite[Lemma 7.4]{herzog2015noise}, so long as $0<s<h+\tfrac{3}{2}$, $\eta \mapsto G_s(\eta)\in C^{\infty}([-\eta_*,\eta_*])$ satisfies the following boundary-valued equation
\begin{equation} \label{eqn:G}
\begin{cases}\kappa_2 G''_s(\eta)+(h+\tfrac{3}{2})\eta G'_s(\eta)+s\, G_s(\eta) = 0,\\
G_s(-\eta_*) = G_s(\eta_*) =1.
\end{cases}
\end{equation}
In view of the choice~\eqref{cond:alphai.betai}, we see that $\alpha_i$ and $\beta_i$ satisfy $0<\beta_i<\alpha_i<1$, implying $\gamma_i$ and $\gammah_i$ as in~\eqref{cond:R3:gamma_i.gammahat_i} are strictly less than $h+\frac{3}{2}$. It follows that $G_{\gamma_i}(\eta)=\E_\eta\big[e^{\gamma_i T_3}\big]$ and $G_{\gammah_i}(\eta)=\E_\eta\big[e^{\gammah_i T_3}\big]$ are both smooth and satisfy~\eqref{eqn:G}.  We can thus express the formula~\eqref{eqn:R3:phi3} as
\begin{equation}\label{eqn:R3:phi3:simplified}
\begin{aligned}
\f_{3,i}(u,v,z)&=A_{3,i}|u|^{p_i+\frac{3}{2}\beta_i}z^{q_i}G_{\gamma_i}(\eta)+|u|^{p_i+\frac{3}{2}\alpha_i}z^{q_i}\Big[B_{3,i}G_{\gammah_i}(\eta)+ C_{3,i}\big(G_{\gammah_i}(\eta)-1\big)\Big].
\end{aligned}
\end{equation}
In order to show that $\f_{3,i}$ satisfies the requisite bounds in $\R_3$ with respect to $\L$ and $\Q$, we will make use of the following result concerning the analysis of the function $G_s$ defined above, the proof of which is deferred until Appendix~\ref{sec:auxiliary-result}.
\begin{lemma} \label{lem:R3:G} Let $G_s(\eta)$ be defined as in~\eqref{eq:R3:G:mean} and $0<s<h+3/2$.  Then for every $\eta\in[-\eta_*,\eta_*]$ 
\begin{align} \label{ineq:R3:eta.G'<0}
\eta G'_s(\eta) \leq 0.
\end{align}
Moreover, if $\eta_*$ satisfies condition~\eqref{cond:r*.eta*}; that is, $\eta_*^2=\kappa_2 C^2$, then there exists a constant $c(s,h)>0$ such that for all $C$ large enough
\begin{align}
\sup_{\eta\in[-\eta_*,\eta_*]}G_s(\eta)&=c(s,h)\, C^{\frac{s}{h+3/2}+1},\label{ineq:R3:sup.G}\\
\sup_{\eta\in[-\eta_*,\eta_*]}|G'_s(\eta)|&= \frac{c(s,h)}{\sqrt{\kappa_2}} C^{\frac{s}{h+3/2}+1},\label{ineq:R3:sup.G'(eta)}
\\
\sup_{\eta\in[-\eta_*,\eta_*]}| G''_s(\eta)|&= \frac{c(s,h)}{\kappa_2} C^{\frac{s}{h+3/2}+1}, \label{ineq:R3:sup.G''(eta)}
\\
\text{and}\qquad G'_s(\pm\eta_*)&= \mp\frac{s}{h+3/2}\eta_*^{-1}+o\big(\eta_*^{-1}\big).\label{ineq:R3:G'(eta_*)}
\end{align}
\end{lemma}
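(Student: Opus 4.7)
My approach hinges on explicitly representing $G_s$ via the confluent hypergeometric (Kummer) function and then passing its asymptotics through a $\kappa_2$-free rescaling. For the monotonicity~\eqref{ineq:R3:eta.G'<0}, the symmetry $\eta \mapsto -\eta$ of the boundary-value problem~\eqref{eqn:G} forces $G_s$ to be even, so $G'_s(0) = 0$. Multiplying the ODE by the integrating factor $\mu(\eta) := \exp(a\eta^2/(2\kappa_2))$, with $a := h + 3/2$, yields $(\mu G'_s)' = -(s/\kappa_2)\mu G_s$. Integrating from $0$ to $\eta$ and using $G'_s(0) = 0$ gives
\begin{equation*}
G'_s(\eta) \;=\; -\frac{s}{\kappa_2\, \mu(\eta)} \int_0^\eta \mu(t)\, G_s(t)\, \d t,
\end{equation*}
and because $G_s \geq 1$, the integral has the same sign as $\eta$, whence $\eta G'_s(\eta) \leq 0$.

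For the remaining estimates I would first rescale via $\xi = \eta/\sqrt{\kappa_2}$ and $\hat G_s(\xi) := G_s(\sqrt{\kappa_2}\,\xi)$, which turns~\eqref{eqn:G} into the $\kappa_2$-free problem $\hat G''_s + a\xi \hat G'_s + s\hat G_s = 0$ on $[-C,C]$ with $\hat G_s(\pm C) = 1$; the chain rule already produces the displayed powers $\kappa_2^{-1/2}$ and $\kappa_2^{-1}$ in~\eqref{ineq:R3:sup.G'(eta)}--\eqref{ineq:R3:sup.G''(eta)}. A further substitution $y = \sqrt{a}\,\xi$ reduces the ODE to $g'' + y g' + \nu g = 0$ with $\nu := s/a \in (0,1)$ (using the hypothesis $s < h + 3/2$), whose unique even solution normalized by $g(0) = 1$ is the Kummer function $g(y) = M(\nu/2,\, 1/2,\, -y^2/2)$; hence $\hat G_s(\xi) = g(\sqrt{a}\,\xi)/g(\sqrt{a}\,C)$.

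From the classical large-argument expansion $M(\nu/2, 1/2, -z) \sim \Gamma(1/2)\,\Gamma(1/2 - \nu/2)^{-1}\, z^{-\nu/2}$ as $z \to \infty$ (well-defined since $\nu < 1$), I would read off $g(\sqrt{a}\,C) \sim c_1(s,h)\, C^{-\nu}$, giving $\sup \hat G_s = \hat G_s(0) = 1/g(\sqrt{a}\,C) \sim c_2(s,h)\, C^{\nu}$. The Kummer identity $g'(y) = -\nu y\, M(\nu/2 + 1,\, 3/2,\, -y^2/2)$ shows $|g'|$ is bounded on $\rbb$ (linear growth at $0$, decay $O(y^{-\nu - 1})$ at infinity), so $\sup_{[-C,C]}|\hat G'_s| \leq \sqrt{a}\,\|g'\|_\infty/g(\sqrt{a}\,C) = O(C^\nu)$. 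Using the ODE directly, $|\hat G''_s| \leq a|\xi|\,|\hat G'_s| + s\,|\hat G_s| = O(C^{\nu + 1})$. Undoing the $\kappa_2$ rescaling and noting $C^\nu \leq C^{\nu + 1}$ for $C \geq 1$ establishes~\eqref{ineq:R3:sup.G}--\eqref{ineq:R3:sup.G''(eta)}. For~\eqref{ineq:R3:G'(eta_*)}, the sharper asymptotic $g'(y) \sim -c_0 \nu\, y^{-\nu - 1}$ yields $\hat G'_s(C) = \sqrt{a}\,g'(\sqrt{a}\,C)/g(\sqrt{a}\,C) = -\nu/C + o(1/C)$, which after the $\kappa_2^{-1/2}$ rescaling gives $G'_s(\eta_*) = -(s/(h + 3/2))\,\eta_*^{-1} + o(\eta_*^{-1})$; the value at $-\eta_*$ follows from evenness of $G_s$.

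The main obstacle is verifying the Kummer representation (checking it satisfies both the ODE and the normalization, then invoking uniqueness of the even solution of~\eqref{eqn:G}) and carefully extracting the leading coefficients through the Gamma-function prefactors; everything else is standard asymptotic bookkeeping. The hypothesis $s < h + 3/2$ is precisely what keeps $\nu < 1$ and $\Gamma(1/2 - \nu/2)$ finite, and is thus sharp for the Kummer-function approach.
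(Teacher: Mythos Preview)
Your proposal is correct and takes a genuinely different route from the paper.

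For the monotonicity~\eqref{ineq:R3:eta.G'<0}, the paper starts from the explicit integral representation
\[
G_s(\eta)=\frac{\int_0^\infty t^{a-1}e^{-t^2/2}\cos(\sqrt{b}\,\eta t)\,\d t}{\int_0^\infty t^{a-1}e^{-t^2/2}\cos(\sqrt{b}\,\eta_* t)\,\d t},\qquad a=\frac{s}{h+3/2},\quad b=\frac{h+3/2}{\kappa_2},
\]
proves positivity of the denominator via the Hausdorff--Bernstein--Widder theorem (complete monotonicity of $u\mapsto u^{(a-1)/2}e^{-u/2}$), and then reads off that the numerator is even and decreasing in $|\eta|$. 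Your integrating-factor argument is more elementary and self-contained: it uses only the ODE~\eqref{eqn:G}, the symmetry, and the soft probabilistic fact $G_s\ge 1$; no special-function machinery is needed.

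For the growth estimates, the paper appeals to large-argument asymptotics of the above integral (parabolic-cylinder functions, NIST \S12.9), whereas you identify the even solution with the Kummer function $M(\nu/2,1/2,-y^2/2)$ after the $\kappa_2$-free rescaling and invoke its classical expansion $M(\alpha,\gamma,z)\sim\Gamma(\gamma)\Gamma(\gamma-\alpha)^{-1}(-z)^{-\alpha}$ as $z\to-\infty$. Your route in fact yields the sharper orders $\sup G_s\asymp C^{\nu}$ and $\sup|G_s'|=O(C^{\nu})$ (with $\nu=s/(h+3/2)$), which you then dominate by $C^{\nu+1}$ to match the stated lemma; for the applications in Lemma~\ref{lem:R3:Lyapunov} only upper bounds are needed, so nothing is lost. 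The boundary-derivative formula~\eqref{ineq:R3:G'(eta_*)} drops out cleanly from $g_0'(y)/g_0(y)\sim-\nu/y$, which is exactly the mechanism the paper's integral representation encodes. Overall, the paper's argument is shorter given the integral formula from \cite{herzog2015noise}, while yours is more transparent and avoids that external input at the cost of verifying the Kummer identification.
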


We now assert the Lypaunov properties in $\R_3$ in the following result.
\begin{lemma}\label{lem:R3:Lyapunov} Let $\f_{3,i}$, $i=1,2,$ be defined as in~\eqref{eqn:R3:phi3}. Then for all $r_*\gg C=C(h)>0$ large enough
\begin{align}
\label{eqn:R3est1}
\L \f_{3,i}(u,v,z) \leq -\tfrac{c_{3,i}}{2}|u|^{p_i+1+\tfrac{3}{2}\alpha_i}z^{q_i-2/3},
\end{align}
for all $(u,v,z) \in \R_3$ and
\begin{align}
\label{ineq:R3est2}
\Q(\f_{3,1}+ \f_{3,2})(u,v,1)\le - \tfrac{1}{2}( q_2-\tfrac{1}{3}p_2-\tfrac{1}{2}\alpha_2)B_{3,2}|u|^{p_2+\frac{3}{2}\alpha_2},
  \end{align}
  for all $(u,v,1)\in\R_3$. 
\end{lemma}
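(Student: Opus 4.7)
The plan is to handle the interior bound~\eqref{eqn:R3est1} by splitting
\[
\L = z^{-2/3}|u|\widetilde{\T}_3 + z^{-2/3}\bigl(v^2\partial_u + \kappa_1\partial_u^2\bigr) - \gamma u\partial_u - \gamma v\partial_v,
\]
so that the PDE~\eqref{eqn:R3:T.hatphi3:boundary.problem} yields the main contribution $z^{-2/3}|u|\widetilde{\T}_3\f_{3,i}=-c_{3,i}|u|^{p_i+1+\frac{3}{2}\alpha_i}z^{q_i-2/3}$ exactly, and the remaining four error terms only need to be shown to be of strictly lower order in $|u|$ so that, for $r_*$ large, they can be absorbed into half of the main term.

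To quantify the errors I would compute $\partial_u\f_{3,i}$, $\partial_u^2\f_{3,i}$, and $\partial_v\f_{3,i}$ via the chain rule in $\eta=|u|^{1/2}v$. Each summand of $\f_{3,i}$ has the form $|u|^az^{q_i}G_s(\eta)$ with $a\in\{p_i+\tfrac{3}{2}\beta_i,\,p_i+\tfrac{3}{2}\alpha_i\}$ and $s\in\{\gamma_i,\gammah_i\}$, so for instance
\[
\partial_u\bigl[|u|^aG_s(\eta)\bigr] = \mathrm{sgn}(u)|u|^{a-1}\bigl[aG_s(\eta)+\tfrac{1}{2}\eta G_s'(\eta)\bigr],
\]
with an analogous but longer identity for $\partial_u^2$ that also features $\eta^2 G_s''(\eta)$. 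Using $|\eta|\le\eta_*=C\sqrt{\kappa_2}$ and $v^2\le\eta_*^2/|u|$ on $\R_3$, combined with the explicit $C$-dependent bounds on $G_s,G_s',G_s''$ from Lemma~\ref{lem:R3:G}~\eqref{ineq:R3:sup.G}--\eqref{ineq:R3:sup.G''(eta)}, each error term is dominated either by $z^{q_i-2/3}|u|^{a-2}$ (the noise pieces) or by $z^{q_i}|u|^a$ (the $\gamma$-drift pieces), times constants depending only on $C,\kappa_1,\kappa_2,h,\gamma,p_i,\alpha_i,\beta_i$. Since every such exponent of $|u|$ is strictly less than $p_i+1+\tfrac{3}{2}\alpha_i$, taking $r_*$ large as built into Choice~\eqref{cond:r*.eta*} yields~\eqref{eqn:R3est1}.

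For the boundary bound~\eqref{ineq:R3est2}, I would compute directly at $z=1$ that
\[
\Q\bigl[|u|^az^{q_i}G_s(\eta)\bigr]\bigl|_{z=1} = |u|^a\Bigl\{\bigl(\tfrac{a}{3}-q_i\bigr)G_s(\eta)+\tfrac{1}{2}\eta G_s'(\eta)\Bigr\}.
\]
Applying this term by term to $\f_{3,1}+\f_{3,2}$, and using $G_s\ge 1$, $\eta G_s'\le 0$ (Lemma~\ref{lem:R3:G}~\eqref{ineq:R3:eta.G'<0}), together with Choice~\ref{cond:pq} (in particular $q_2>\tfrac{1}{3}p_2+\tfrac{1}{2}\alpha_2=\tfrac{1}{3}(p_2+\tfrac{3}{2}\alpha_2)$), the coefficient of the top-order power $|u|^{p_2+\frac{3}{2}\alpha_2}$ collapses, after cancelling $C_{3,2}$ against $B_{3,2}+C_{3,2}$, to exactly $-B_{3,2}(q_2-\tfrac{1}{3}p_2-\tfrac{1}{2}\alpha_2)$. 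Because by~\eqref{cond:p2.q2} every other exponent appearing ($p_1+\tfrac{3}{2}\alpha_1$, $p_1+\tfrac{3}{2}\beta_1$, $p_2+\tfrac{3}{2}\beta_2$) is strictly below $p_2+\tfrac{3}{2}\alpha_2$, a final choice of $r_*$ large absorbs these lower-order contributions into half of that term and delivers~\eqref{ineq:R3est2}.

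The main obstacle is the bookkeeping in the $\partial_u^2\f_{3,i}$ calculation: after the chain rule this derivative is a linear combination of $G_s,\eta G_s',\eta^2 G_s''$ with $a$-dependent coefficients, and one must exploit the explicit $C$-scaling in Lemma~\ref{lem:R3:G} (each of the three bounds grows at most like $C^2$, since $\beta_i,\alpha_i\in(0,1)$ forces $\gamma_i,\gammah_i<h+\tfrac{3}{2}$) to verify that the resulting constants are dominated by the $|u|^{-3}$ gap separating the error exponents from the main exponent. This is precisely why Choice~\eqref{cond:r*.eta*} requires $r_*\gg C$; no new ideas beyond those already deployed in Lemmas~\ref{lem:R0:Lyapunov}--\ref{lem:R2:Lyapunov} and the analytic estimates on $G_s$ are needed.
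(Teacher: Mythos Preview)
Your approach is essentially the same as the paper's: the same decomposition $\L=z^{-2/3}|u|\widetilde{\T}_3+z^{-2/3}(v^2\partial_u+\kappa_1\partial_u^2)-\gamma u\partial_u-\gamma v\partial_v$, the main term coming from the PDE~\eqref{eqn:R3:T.hatphi3:boundary.problem}, and the residuals controlled via the $C$-growth bounds on $G_s,G_s',G_s''$ from Lemma~\ref{lem:R3:G} together with $|\eta|\le\eta_*$, $v^2\le\eta_*^2/|u|$. The only organizational difference is that the paper writes out the residual as a sum $I_0,\dots,I_6$ and exploits two sign observations (the $v^2\partial_u$/$\gamma$-drift piece with factor $G_s\ge1$ is $\le0$, and the $\eta G_s'$ cross-term in $\partial_u^2$ is $\le0$), whereas you bound everything by sup norms; since every error is still strictly lower order in $|u|$, your cruder bookkeeping works just as well once $r_*\gg C$. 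For the boundary estimate your computation of $\Q[|u|^az^{q_i}G_s(\eta)]|_{z=1}$ and the cancellation $(B_{3,2}+C_{3,2})-C_{3,2}=B_{3,2}$ via $G_s\ge1$ match the paper exactly. One minor slip: the ``$|u|^{-3}$ gap'' you mention applies to the noise pieces, but the $\gamma$-drift errors are only $|u|^{-1}$ below the main exponent; this does not affect the argument.
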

\begin{proof}
We begin by showing~\eqref{eqn:R3est1}. 
In view of~\eqref{generator:L:uvz} and~\eqref{eqn:R3:T:hat3}, we can rewrite $\L$ as 
\begin{align*}
\L=z^{-2/3}\big(|u|\widetilde{\T}_3+v^2\partial_u+\kappa_1\partial_{u}^2\big)-\gamma u\partial_u-\gamma v\partial_v.
\end{align*}
Recalling~\eqref{eqn:R3:T.hatphi3:boundary.problem}, we find
\begin{align}\label{eqn:R3:T.hat3phi3}
z^{-2/3}|u|\widetilde{\T}_3\f_{3,i}=-c_{3,i}|u|^{p_i+1+\frac{3}{2}\alpha_i}z^{q_i-2/3}.
\end{align}
For the second and third terms on the right-hand side of~\eqref{eqn:R3:phi3:simplified}, it follows that
\begin{align*}
\MoveEqLeft[1](\L-z^{-2/3}|u|\widetilde{\T}_3)\big( |u|^{p_i+\frac{3}{2}\alpha_i}z^{q_i}\big[B_{3,i}G_{\gammah_i} (\eta)+ C_{3,i}\big(G_{\gammah_i} (\eta)-1\big)\big]\big)&\\
&=-(p_i+\tfrac{3}{2}\alpha_i)\big(\gamma|u|^{p_i+\frac{3}{2}\alpha_i}+|u|^{p_i-1+\frac{3}{2}\alpha_i}v^2z^{-2/3}\big)z^{q_i}\big[B_{3,i}G_{\gammah_i} (\eta)+ C_{3,i}\big(G_{\gammah_i} (\eta)-1\big)\big]\\
&\qquad -\tfrac{3}{2}\gamma(B_{3,i}+C_{3,i})|u|^{p_i+\frac{3}{2}\alpha_i}z^{q_i}G'_{\gammah_i} (\eta)\eta\\
&\qquad -\tfrac{1}{2}(B_{3,i}+C_{3,i})|u|^{p_i+\frac{3}{2}\alpha_i}z^{q_i-2/3}G'_{\gammah_i} (\eta)|u|^{-1/2}v^3\\
&\qquad +\kappa_1(p_i+\tfrac{3}{2}\alpha_i)(p_i+\tfrac{3}{2}\alpha_i-1)|u|^{p_i-2+\frac{3}{2}\alpha_i}z^{q_i-2/3}\big[B_{3,i}G_{\gammah_i} (\eta)+ C_{3,i}\big(G_{\gammah_i} (\eta)-1\big)\big]\\
&\qquad +\kappa_1(p_i+\tfrac{3}{2}\alpha_i)(B_{3,i}+C_{3,i})|u|^{p_i-1+\frac{3}{2}\alpha_i}z^{q_i-2/3}G'_{\gammah_i} (\eta)|u|^{-1/2}v\\
&\qquad +\tfrac{1}{4}\kappa_1(B_{3,i}+C_{3,i})|u|^{p_i+\frac{3}{2}\alpha_i}z^{q_i-2/3}G''_{\gammah_i} (\eta)|u|^{-1}v^2\\
&\qquad -\tfrac{\kappa_1}{4}(B_{3,i}+C_{3,i})|u|^{p_i+\frac{3}{2}\alpha_i}z^{q_i-2/3}G'_{\gammah_i} (\eta)|u|^{-3/2}v\\
&=-I_0-I_1-I_2+I_3+I_4+I_5-I_6.
\end{align*}
We proceed to show that each $I_k$, $k=1,\dots,6$ is dominated by the right-hand side of~\eqref{eqn:R3:T.hat3phi3}.  To see this, we first note that by Lemma~\ref{lem:R3:G}, cf.~\eqref{ineq:R3:eta.G'<0}, $\eta G'_{\gammah_i} (\eta)\le 0$. It follows that $I_4\le 0$, which we can neglect.  Also note that $-I_0\leq 0$. since $G_s\geq 1$. To estimate $I_1$, we invoke~\eqref{ineq:R3:sup.G'(eta)} together with $\sqrt{1+C^{-2}} |u|\ge r\ge r_*$ to infer the bound
\begin{align*}
|I_1|&\le c(\gammah_i ,h)\gamma(B_{3,i}+C_{3,i}) C^{\frac{\gammah_i}{h+3/2}+2} |u|^{p_i+\frac{3}{2}\alpha_i}z^{q_i}\\
&\le c(\gammah_i ,h)\gamma(B_{3,i}+C_{3,i})C^{\frac{\gammah_i}{h+3/2}+2}\frac{1}{r_*}|u|^{p_i+\frac{3}{2}\alpha_i+1}z^{q_i-2/3}.
\end{align*}
Likewise, 
\begin{align*}
|I_2|&=\tfrac{1}{2}(B_{3,i}+C_{3,i})|u|^{p_i+\frac{3}{2}\alpha_i-2}z^{q_i-2/3}G'_{\gammah_i}(\eta)(|u|^{1/2}|v|)^3\\
&\le  c(\gammah_i ,h)\kappa_2(B_{3,i}+C_{3,i}) C^{\frac{\gammah_i}{h+3/2}+4} |u|^{p_i+\frac{3}{2}\alpha_i-2}z^{q_i-2/3}\\
&\le  c(\gammah_i ,h)\kappa_2(B_{3,i}+C_{3,i})C^{\frac{\gammah_i}{h+3/2}+4}\frac{1}{r_*^3}|u|^{p_i+\frac{3}{2}\alpha_i+1}z^{q_i-2/3},
\end{align*}
and
\begin{align*}
|I_6|&\le c(\gammah_i ,h)\kappa_1(B_{3,i}+C_{3,i}) C^{\frac{\gammah_i}{h+3/2}+2} |u|^{p_i+\frac{3}{2}\alpha_i-2}z^{q_i-2/3}\\
&\le c(\gammah_i ,h)\kappa_1(B_{3,i}+C_{3,i}) C^{\frac{\gammah_i}{h+3/2}+2}\frac{1}{r_*^3}|u|^{p_i+\frac{3}{2}\alpha_i+1}z^{q_i-2/3}.
\end{align*}
Similarly, to estimate $I_3$ and $I_5$, we employ~\eqref{ineq:R3:sup.G} and~\eqref{ineq:R3:sup.G''(eta)}, respectively, to obtain the bounds
\begin{align*}
|I_3|&\le c(p_i,q_i,\alpha_i)(B_{3,i}+C_{3,i})C^{\frac{\gammah_i}{h+3/2}+1}\frac{1}{r_*^3} |u|^{p_i+\frac{3}{2}\alpha_i+1}z^{q_i-2/3},\\
\text{and}\qquad|I_5|&\le c(p_i,q_i,\alpha_i)(B_{3,i}+C_{3,i})C^{\frac{\gammah_i}{h+3/2}+2} \frac{1}{r_*^3} |u|^{p_i+\frac{3}{2}\alpha_i+1}z^{q_i-2/3}.
\end{align*}
Recalling $B_{3,i}$ and $C_{3,i}$ as in~\eqref{cond:R3:A3.B3} and $c_{3,i}$ in~\eqref{cond:R3:c3}, we observe that
\begin{align*}
B_{3,i}=\frac{B_{2,i}}{\eta_*^{\alpha_i}}=\frac{c_{2,i}}{(\alpha_i-\beta_i)\eta_*^{\alpha_i}}=\frac{2}{\alpha_i-\beta_i}c_{3,i}\quad\text{and}\quad C_{3,i}=\frac{1}{\gammah_i}c_{3,i}.
\end{align*}
It follows that all $I_k,$ $k=1,\dots,6$ above are clearly dominated by $-c_{3,i}|u|^{p_i+1+\frac{3}{2}\alpha_i}z^{q_i-2/3}$ for $r_*$ large enough. This finishes the estimate for the second and third terms on the right-hand side of~\eqref{eqn:R3:phi3:simplified}.

Considering the first term on the right-hand side of~\eqref{eqn:R3:phi3:simplified}, a similar argument as above also yields the bound
\begin{align*}
\MoveEqLeft[1](\L-z^{-2/3}|u|\widehat{\T}_3)\big( A_{3,i} |u|^{p_i+\tfrac{3}{2}\beta_i}z^{q_i}G_{\gamma_i}(\eta)\big)=O\Big(\frac{1}{r_*}\Big)|u|^{p_i+1+\frac{3}{2}\beta_i}z^{q_i-2/3}.
\end{align*}
Since $|u|\ge r_*/2$ and $\beta_i<\alpha_i$, we readily have that $|u|^{p_i+1+\frac{3}{2}\beta_i}$ is dominated by $|u|^{p_i+1+\frac{3}{2}\alpha_i}$ for all $r_*$ large enough.  This finishes the proof of~\eqref{eqn:R3est1}.

We next turn to checking that the boundary terms have the requisite bound.  Note that 
\begin{align*}
\big(\frac{u}{3z}\partial_u+\frac{v}{3z}\partial_v-\partial_z\big)\f_{3,2}\big|_{\{z=1\}}&=-(q_2-\tfrac{1}{3}p_2-\tfrac{1}{2}\alpha_2)|u|^{p_2+\frac{3}{2}\alpha_2}\big[B_{3,2}G_{\gammah_2}(\eta)+ C_{3,2}\big(G_{\gammah_2}(\eta)-1\big)\big]\\
&\qquad -(q_2-\tfrac{1}{3}p_2-\tfrac{1}{2}\beta_2)|u|^{p_2+\frac{3}{2}\beta_2}A_{3,2}G_{\gamma_2}(\eta)\\
&\qquad+\tfrac{1}{2}(B_{3,2}+ C_{3,2})|u|^{p_2+\frac{3}{2}\alpha_2}G'_{\gammah_2}(\eta)\eta+\tfrac{1}{2}A_{3,2}|u|^{p_2+\frac{3}{2}\alpha_2}G'_{\gamma_2}(\eta)\eta.
\end{align*}
We recall from~\eqref{ineq:R3:eta.G'<0} that $G'_{\gammah_2}(\eta)\eta$ and $G'_{\gamma_2}(\eta)\eta$ are negative. Concerning the first two terms, by Choice~\ref{cond:pq}, we see that 
\begin{align*}
q_2-\tfrac{1}{3}p_2-\tfrac{1}{2}\beta_2>q_2-\tfrac{1}{3}p_2-\tfrac{1}{2}\alpha_2>0,\quad
\text{and}\quad
\alpha_2>\beta_2>0.
\end{align*}
Hence, for $|u|\ge r_*/2$ in $\R_3$ and $r_*$ large enough 
\begin{align*}
\Q\f_{3,2}(u,v,1)\le -(q_2-\tfrac{1}{3}p_2-\tfrac{1}{2}\alpha_2)B_{3,2}|u|^{p_2+\frac{3}{2}\alpha_2}.
\end{align*}
Likewise, we have
\begin{align*}
\Q\f_{3,1}(u,v,1)=O(|u|^{p_1+\frac{3}{2}\alpha_1}).
\end{align*}
Since $p_1+\frac{3}{2}\alpha_1<p_2+\frac{3}{2}\alpha_2$, we readily obtain the inequality~\eqref{ineq:R3est2}. This finishes the proof.

\end{proof}

\subsection{Lyapunov bounds for $\RI$} \label{sec:inner:lyapprop} In this subsection, we first show that $\g$ given by~\eqref{def:psi:average.Lyapunov} is indeed an averaging Lyapunov function for $\A$.  This result is at least a partial confirmation of how dissipation is generated through averaging effects when $z\approx 0$.  Although we do not need the result below in order to prove Proposition~\ref{prop:Lyapunov:xyz}, we will use some of the estimates contained in its proof to obtain the needed Lyapunov-type estimates involving $\L$ and $\Q$ afterwards.        
\begin{lemma} \label{lem:averaging.Lyapunov}
There exists a constant $c_*>0$ sufficiently small such that for all $|\kappa_1/\kappa_2-1|<c_*$, $\g$ given by~\eqref{def:psi:average.Lyapunov} is an averaging Lyapunov function as in Definition~\ref{def:averaging.Lyapunov} for $\A$, the operator as in~\eqref{def:A}.
\end{lemma}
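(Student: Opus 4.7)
The plan is to verify the two conditions (i) and (ii) in Definition~\ref{def:averaging.Lyapunov}. Condition (i) is a growth estimate: since $\g_1(u,v) = -J\log r$ for $r^2\ge J$ and $\g_2(u,v) = O(1/r)$ as $r\to\infty$, we have $\g(u,v)=O(\log r)$, while the function $\f_1$ from Definition~\ref{def:f_O} is bounded below by a positive multiple of $r^{p_1}$ for large $r$ (as can be checked directly from the explicit formulas in each $\R_j$) with $p_1>0$ from Choice~\ref{cond:pq}. Hence $\g = o(\f_1)$ as $|(u,v)|\to\infty$, which gives (i).

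The bulk of the work is (ii). Since $\g_1$ is only $C^1$ (but not $C^2$) at $\{r^2=J\}$, I would first mollify $\g$ to a sequence $\g^\delta\in C^\infty$ with uniformly bounded second derivatives, apply the standard It\^o formula to $\g^\delta(U_{t\wedge\tau_n},V_{t\wedge\tau_n})$, and pass to the limit $\delta\downarrow 0$, using that the nondegenerate 2-D diffusion $(U_t,V_t)$ spends zero Lebesgue time on the codimension-one set $\{r^2=J\}$. Hence it suffices to establish the pointwise bound
\begin{align*}
\A\g(u,v) \le C_1\, u - \epsilon \qquad \text{for all } (u,v)\in\rbb^2\setminus\{r^2=J\},
\end{align*}
with constants $C_1,\epsilon>0$ independent of $(u,v)$. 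I would take $C_1:=J\alpha_h$ and split the analysis into three regions according to where $\lambda_1(r^2/(J/2))$ equals $0$, transitions, or equals $1$.

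In the inner ball $\{r^2\le J/2\}$, where $\g\equiv\g_{11}$, a direct computation gives $\A\g_{11} = \alpha_h u r^2 - (\kappa_1+\kappa_2)$, and so
\begin{align*}
\A\g - C_1 u = \alpha_h u(r^2-J) - (\kappa_1+\kappa_2).
\end{align*}
Using $|u|\le r\le\sqrt{J/2}$ together with the identity $\alpha_h J^{3/2} = (\kappa_1+\kappa_2)/2$ coming from~\eqref{cond:J.m}, the first term has absolute value at most $(\kappa_1+\kappa_2)/(2\sqrt 2)$, leaving $\epsilon$ of order $\kappa_1+\kappa_2$. In the outer region $\{r^2\ge J\}$, where $\g=\g_{12}+\g_2$, a direct computation in polar coordinates $(r,\theta)$ yields
\begin{align*}
\A\g = J\alpha_h u - \tfrac{m(\alpha_h+1)}{2} - \tfrac{m(\alpha_h-1)}{2}\cos(2\theta) + \tfrac{J(\kappa_1-\kappa_2)\cos(2\theta)}{r^2} - \tfrac{2m(\kappa_1-\kappa_2)\cos(3\theta)}{r^3}.
\end{align*}
Maximizing the $\theta$-dependent part produces $\A\g \le J\alpha_h u - m\alpha_h + |\kappa_1-\kappa_2|(1+O(c_*))$, and the definition of $m$ in~\eqref{cond:J.m} is calibrated so that $m\alpha_h$ strictly exceeds this error term whenever $c_*$ is small. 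In the transition annulus $\{J/2\le r^2\le J\}$, which is compact, the bound on $\A\g_{11}-C_1 u$ is exactly the one used in the inner ball, and the additional contribution $\A(\lambda_1(r^2/(J/2))\g_2)$ is of order $m/\sqrt{J}+\kappa m/J^{3/2} = O(c_*(\kappa_1+\kappa_2))$ by a straightforward bound on the cutoff and its derivatives, which is absorbed when $c_*$ is small.

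The main obstacle will be making the constants in the outer-region calculation line up: the $\g_1$ piece produces the beneficial drift $J\alpha_h u$ plus an unavoidable $\theta$-dependent error of order $|\kappa_1-\kappa_2|$, while $\g_2$ must supply a uniformly negative constant $-m\alpha_h$ (after worst-case maximization over $\theta$) that strictly beats this error. The specific formula for $m$ in~\eqref{cond:J.m}, with the denominator $\tfrac12-\tfrac{12c_*}{2-c_*}$, encodes precisely this balance together with the $O(c_*)$ corrections coming from the $1/r^2$, $1/r^3$ terms and the transition annulus.
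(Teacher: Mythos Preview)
Your proposal is correct and follows essentially the same strategy as the paper: compute $\A\g$ explicitly on the inner and outer pieces and verify $\A\g\le \alpha_h J\,u-\epsilon$, with the choices of $J$ and $m$ in~\eqref{cond:J.m} calibrated to make the constants balance. The paper's organization differs only in minor ways: it splits into just two regions $\{r^2\le J\}$ and $\{r^2\ge J\}$ (your annulus step has a small slip, since the constant $(\kappa_1+\kappa_2)/(2\sqrt2)$ in the inner ball relied on $r\le\sqrt{J/2}$; the paper instead uses $\alpha_h u(r^2-J)\le\alpha_h r(J-r^2)\le\alpha_h J^{3/2}=(\kappa_1+\kappa_2)/2$ on all of $\{r^2\le J\}$, which still leaves $-(\kappa_1+\kappa_2)/2$), and it handles the $C^1$-but-not-$C^2$ interface by invoking the It\^o/Tanaka formula of Appendix~\ref{sec:Tanaka} together with Remark~\ref{rem:psi.not.C^2} rather than by mollification.
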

\begin{proof} We first consider $\g_1$ as in~\eqref{def:psi_1} and note that for any $J>0$, $\f_4 \in C^1(\RR^2;\RR)$.  Furthermore, $\g_{1i}$, $i=1,2$, are $C^2$ on their closed domains of definition. Also, note that 
 \begin{align*}
 &\A \g_{11}(u, v )= \alpha_h u  r ^{2} -(\kappa_1+\kappa_2),  \quad  r ^{2} \leq J, \\
  &\A \g_{12}(u, v ) =\alpha_h J u + J (\kappa_2-\kappa_1) \frac{ v ^{2}-u^{2}}{ r ^{4}}, \quad  r ^{2}\geq J.  
 \end{align*}  
Note that on the set $\{ r ^{2} \geq J\}$ we have
 \begin{align*}
 \A \g_{12}(u, v ) \leq \alpha_h J u + |\kappa_2-\kappa_1|\le \alpha_h J u + \kappa_1c_*. 
 \end{align*}  
On the other hand on $\{ r^2 \leq J\}$, we have
\begin{align}
\label{eqn:avest1}
\A \g_{11} \leq \alpha_h J u - \frac{\kappa_1+\kappa_2}{2}.
\end{align}  
Indeed, the inequality claimed above is equivalent to
$$\alpha_h u( r ^{2}-J) \le \frac{\kappa_1+\kappa_2}{2},$$
which in turn is a consequence of the following estimate
$$\alpha_h  r (J- r ^{2}) \le \alpha_h J^{3/2}\le \frac{\kappa_1+\kappa_2}{2},$$
by the choice of $J$ as in~\eqref{cond:J.m}.

To obtain the needed estimate for $\g_2$ as in~\eqref{def:psi_2}, note that 
\begin{align*}
-m^{-1} \A \g_2 &=\frac{\alpha_hu^{2}+ v ^{2}}{ r ^{2}}\lambda_1+2(\kappa_2-\kappa_1)\frac{u(3 v ^{2}-u^2)}{ r ^6}\lambda_1\\
&\qquad-\frac{4\alpha_h}{J}u^{2}\lambda_1'+\frac{4}{J}(\kappa_1-\kappa_2)\frac{u(3 v ^{2}-u^2)}{ r ^4}\lambda_1' +\frac{16}{J^2}\frac{u(\kappa_1u^{2}+\kappa_2 v ^{2})}{ r ^{2}}\lambda_1''.
\end{align*}
On the set $ r ^{2}\ge J$, observe that $\lambda_1=1$ and $\lambda_1'=\lambda_1''=0$. Hence multiplying the above by $-1$ and using the choice of $J$ we find that when $r^2\ge J$ and $c_*<2$
\begin{align*}
m^{-1}\A\g_2&=-\frac{\alpha_hu^{2}+ v ^{2}}{ r ^{2}}-2(\kappa_2-\kappa_1)\frac{u(3 v ^{2}-u^2)}{ r ^6}\\
&\le -\alpha_h + \frac{6}{J^{3/2}}|\kappa_2-\kappa_1|= -\alpha_h + 12\alpha_h \frac{|\kappa_2-\kappa_1|}{\kappa_1+\kappa_2}\le -\alpha_h + 12\alpha_h \frac{c_*}{2-c_*}.
\end{align*} 
In view of the choice of $m$ in~\eqref{cond:J.m}, we obtain
$$ \A\g_2\le -\frac{m\alpha_h}{2}-\kappa_1c_*.$$
On the other hand, using the above calculations we can estimate $\A \g_2$ on the set $\{r^2\leq J\}$ by choosing $c_*>0$ small enough as follows: 
\begin{align*}
\A\g_2&\le m\Big(\frac{6 }{J^{3/2}}|\kappa_2-\kappa_1|+4\alpha_h |\lambda_1'|+\frac{12}{ J^{3/2}}|\kappa_2-\kappa_1||\lambda_1'|+\frac{16(\kappa_1+\kappa_2)}{J^{3/2}}|\lambda_1''|\Big)\le \frac{\kappa_1+\kappa_2}{4}.
\end{align*}  
Note that the size of the derivatives of $\lambda_1$ is independent of the choice of any parameters.

Collecting the above estimates we find that on $\{ r^2\geq J\}$ or on $\{ r^2 \leq J\}$
\begin{align}
\label{eqn:intlyap0}
\A\g=\A\g_1+\A\g_2\le \alpha_h Ju-\min\big\{\frac{m\alpha_h}{2},\frac{\kappa_1+\kappa_2}{4}\big\}.
\end{align} 
Applying the It\^{o}/Tanaka formula~\eqref{eqn:Tanaka} in Appendix~\ref{sec:Tanaka}, this in turn implies~\eqref{eqn:avgb} for
\begin{equation*} 
C_1=\alpha_h J,\quad\text{and}\quad\epsilon=\min\big\{\frac{m\alpha_h}{2},\frac{\kappa_1+\kappa_2}{4}\big\},
\end{equation*}
thus finishing the proof.  
\end{proof}
\begin{remark} \label{rem:psi.not.C^2}
Since the function $\psi$ is globally $C^1$ on $\RR^2$ (as opposed to globally continuous), note that there are no additional boundary-flux terms that arise, via the It\^{o}/Tanaka formula~\eqref{eqn:Tanaka}, on the interfaces where $\psi$ is not $C^2$.  
\end{remark}

As a consequence of the previous proof, we collect the needed Lyapnov bounds for the interior function $\psi$.  

\begin{corollary} \label{corollary:psi}
There exist $c_*>0$ sufficiently small so that on either $\{ r^2 \leq J\}$ or on $\{ r^2 \geq J\}$
\begin{align}
\label{eqn:intlyap1}
\L \psi (u,v,z) &\leq \frac{u \alpha_h J}{z^{2/3}} - \min\bigg\{ \frac{m \alpha_h}{2z^{2/3}}, \frac{\kappa_1 + \kappa_2}{4z^{2/3}}\bigg\} + D_1,\\
\Q \psi(u,v,1) & \leq D_2,
\label{eqn:intlyap2} 
\end{align}
for some constants $D_1, D_2>0$.  
\end{corollary}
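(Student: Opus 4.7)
The plan is to recognize that the bounds (\ref{eqn:intlyap1}) and (\ref{eqn:intlyap2}) are essentially a direct repackaging of the pointwise estimate (\ref{eqn:intlyap0}) already extracted inside the proof of Lemma~\ref{lem:averaging.Lyapunov}, adjusted to account for the factor $z^{-2/3}$ appearing in $\L$ and the small drift correction $-\gamma u\partial_u - \gamma v\partial_v$. The crucial structural observation is that $\g$ depends only on $(u,v)$, which both kills the $z$-transport term $(1-h)uz^{1/3}\partial_z$ in $\L$ and collapses $\Q\g|_{\{z=1\}}$ to a purely first-order radial expression.

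First I would decompose
\begin{equation*}
\L = z^{-2/3}\A + (1-h) u z^{1/3}\partial_z - \gamma u\partial_u - \gamma v\partial_v,
\end{equation*}
so that, since $\partial_z\g=0$,
\begin{equation*}
\L\g = z^{-2/3}\A\g - \gamma\bigl(u\partial_u\g + v\partial_v\g\bigr).
\end{equation*}
Applying the pointwise bound (\ref{eqn:intlyap0})---valid on each of $\{r^2\leq J\}$ and $\{r^2\geq J\}$ for the same $c_*>0$ as in Lemma~\ref{lem:averaging.Lyapunov}---to the first term on the right-hand side produces exactly the two $z^{-2/3}$ contributions appearing in (\ref{eqn:intlyap1}). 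It then remains only to control $-\gamma(u\partial_u\g + v\partial_v\g)$ by an absolute constant $D_1$, i.e.\ to show that the radial derivative of $\g$ is uniformly bounded on $\RR^2$.

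For this, a direct computation using (\ref{def:psi_1}) gives $u\partial_u\g_1 + v\partial_v\g_1 = -r^2$ on $\{r^2\leq J\}$ and $=-J$ on $\{r^2\geq J\}$, hence bounded in absolute value by $J$ everywhere. Differentiating (\ref{def:psi_2}) and collecting terms via the identity $-u(r^2-2u^2) + 2uv^2 = ur^2$ yields
\begin{equation*}
u\partial_u\g_2 + v\partial_v\g_2 = m\frac{u}{r^2}\lambda_1\Big(\frac{2r^2}{J}\Big) - \frac{4mu}{J}\lambda_1'\Big(\frac{2r^2}{J}\Big),
\end{equation*}
which is uniformly bounded because the first term is supported on $\{r^2\geq J/2\}$ (so $|u|/r^2\leq \sqrt{2/J}$) and the second on $\{J/2\leq r^2\leq J\}$ (so $|u|\leq \sqrt{J}$), while $\lambda_1,\lambda_1'$ are themselves bounded. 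Combining these contributions furnishes the constant $D_1$ required in (\ref{eqn:intlyap1}).

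The boundary bound (\ref{eqn:intlyap2}) then follows immediately: at $z=1$ and using $\partial_z\g=0$,
\begin{equation*}
\Q\g(u,v,1) = \tfrac{1}{3}\bigl(u\partial_u\g + v\partial_v\g\bigr),
\end{equation*}
which is bounded above by the same constant from the previous step, delivering $D_2$. I do not foresee any real obstacle here; the only mild subtlety is to invoke the \emph{pointwise} inequality (\ref{eqn:intlyap0}) (extracted by hand inside the proof of Lemma~\ref{lem:averaging.Lyapunov}) rather than its It\^{o}/Tanaka integrated consequence (\ref{eqn:avgb}), since the corollary asks for a pointwise differential inequality rather than one in expectation.
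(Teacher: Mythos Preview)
Your proposal is correct and follows essentially the same approach as the paper's own proof: decompose $\L\g = z^{-2/3}\A\g - \gamma(u\partial_u\g + v\partial_v\g)$, invoke the pointwise bound~(\ref{eqn:intlyap0}) for the first piece, and then bound the radial-derivative correction and the boundary term $\Q\g(u,v,1)=\tfrac{1}{3}(u\partial_u\g+v\partial_v\g)$ by explicit computation. In fact you supply more detail than the paper (which simply writes out the correction terms on $\{r^2\le J\}$ and asserts the analogous bound on $\{r^2\ge J\}$ and the boundary estimate ``by direct calculation''), but the underlying argument is identical.
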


\begin{proof}
To obtain~\eqref{eqn:intlyap1}, note that by~\eqref{eqn:intlyap0} on $\{ r^2 \leq J \}$ for $c_*>0$ small enough
\begin{align*}
\L \psi &= \frac{\A\psi} {z^{2/3}} - \gamma u \partial_u \psi - \gamma v \partial_v \psi\\
& \leq \frac{\alpha_h J u}{z^{2/3}} -  \min\bigg\{ \frac{m \alpha_h}{2z^{2/3}}, \frac{\kappa_1 + \kappa_2}{4z^{2/3}}\bigg\}+ \gamma r^2 -\frac{\gamma m u }{(u^2+ v^2)}\lambda_1 + \frac{4m\gamma u}{J}\lambda_1' \\
& \leq \frac{\alpha_h J u}{z^{2/3}} -  \min\bigg\{ \frac{m \alpha_h}{2z^{2/3}}, \frac{\kappa_1 + \kappa_2}{4z^{2/3}}\bigg\} + D_{11},
\end{align*}
for some constant $D_{11}>0$.  A similar estimate holds on $\{ r^2 \geq J \}$ for some constant $D_{12}>0$.  Taking $D_1=\max\{ D_{11}, D_{12} \}$ produces~\eqref{eqn:intlyap1}.  The second estimate~\eqref{eqn:intlyap2} follows by direct calculation.       
\end{proof}

\section{Boundary-Flux Estimates} \label{sec:flux}

In order to conclude the proof of Proposition~\ref{prop:Lyapunov:xyz}, essentially all we have left to check is that the boundary-flux terms that arise in the It\^{o}/Tanaka formula~\eqref{eqn:Tanaka} applied to the outer function $\Phi$ have the appropriate sign.  Since the local functions $\f_{i,j}$ are $C^\infty$ on their closed domains of definition, the interfaces where this condition needs to be checked are $\R_0\cap \R_1$, $\R_1\cap \R_2$ and $\R_2\cap \R_3$ for a $C=C(h)>0$ and $r_*>0$ large enough.

 We first begin with the:
\subsection{ Boundary-flux on $\R_0\cap \R_1$ }
 
\begin{lemma} \label{lem:flux:R0R1}
Let $\f_{0,i}$ and $\f_{1,i}$, $i=1,2,$ be given as in~\eqref{form:R0:phi0} and \eqref{eqn:R1:phi1}, respectively. Then for all $C=C(h)>0$ large enough 
\begin{align} \label{ineq:flux:R0R1}
\big(\partial_u \f_{0,i}- \partial_u\f_{1,i}\big)\big|_{\R_0\cap \R_1} \le 0.
\end{align}
\end{lemma}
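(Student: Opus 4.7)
The plan is a direct computation on the interface $\R_0 \cap \R_1 = \{u = C^{-1}|v|\}$. My first step would be to verify that $\f_{0,i}$ and $\f_{1,i}$ actually agree on this interface, which will mean only their $\partial_u$ derivatives need to be compared. On the interface one has $u/|v|=1/C$, so the integral $\int_{u/|v|}^{1/C}(t^2+1)^{(\alpha_i-\beta_i-1)/2}\,\d t$ in the second term of~\eqref{eqn:R1:phi1} vanishes. Moreover $r^2 = v^2(1+C^{-2})$, so $|r/v|^{\beta_i}=(1+C^{-2})^{\beta_i/2}$, which exactly cancels the prefactor $(C^{-2}+1)^{-\beta_i/2}$ and reduces the first term to $r^{p_i}z^{q_i}=\f_{0,i}$.

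Next I would compute $\partial_u\f_{1,i}$ by writing $\f_{1,i}=(A+c_{1,i}I(u,v))r^{p_i+\beta_i}|v|^{-\beta_i}z^{q_i}$ with $A=(C^{-2}+1)^{-\beta_i/2}$ and $I(u,v)=\int_{u/|v|}^{1/C}(t^2+1)^{(\alpha_i-\beta_i-1)/2}\,\d t$. Using $\partial_u I=-(1+u^2/v^2)^{(\alpha_i-\beta_i-1)/2}/|v|$ and evaluating at the interface (where $I=0$, $|v|=r/\sqrt{1+C^{-2}}$, $u=r/\sqrt{C^2+1}$), a short calculation yields
\begin{align*}
\partial_u\f_{1,i}\big|_{\R_0\cap\R_1}=\frac{p_i+\beta_i}{\sqrt{C^2+1}}\,r^{p_i-1}z^{q_i}-c_{1,i}(1+C^{-2})^{\alpha_i/2}\,r^{p_i-1}z^{q_i},
\end{align*}
while $\partial_u\f_{0,i}\big|_{\R_0\cap\R_1}=p_i\,r^{p_i-1}z^{q_i}/\sqrt{C^2+1}$. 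Subtracting and substituting $c_{1,i}=\beta_i/(2C)$ (cf.~\eqref{cond:R1:c1}) and $\sqrt{C^2+1}=C\sqrt{1+C^{-2}}$ gives
\begin{align*}
\big(\partial_u\f_{0,i}-\partial_u\f_{1,i}\big)\big|_{\R_0\cap\R_1}=\frac{\beta_i\,r^{p_i-1}z^{q_i}}{C}\Bigl[-(1+C^{-2})^{-1/2}+\tfrac{1}{2}(1+C^{-2})^{\alpha_i/2}\Bigr].
\end{align*}

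The final step is to show the bracket is non-positive for $C$ large enough. Since $\alpha_i<1$ by Choice~\ref{cond:pq}, we have $(\alpha_i+1)/2<1$, and thus $(1+C^{-2})^{(\alpha_i+1)/2}\leq 1+C^{-2}\leq 2$ for $C\geq 1$. Rearranging yields $\tfrac{1}{2}(1+C^{-2})^{\alpha_i/2}\leq(1+C^{-2})^{-1/2}$, which makes the bracket $\leq 0$ and establishes~\eqref{ineq:flux:R0R1}.

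There is no real conceptual obstacle; the argument is pure bookkeeping. It is worth noting, however, that the choice $c_{1,i}=\beta_i/(2C)$ was engineered precisely so that the extra factor of $\beta_i$ gained by differentiating $|r/v|^{\beta_i}$ in $\f_{1,i}$ (producing the term $(p_i+\beta_i)/\sqrt{C^2+1}$ versus $p_i/\sqrt{C^2+1}$ in $\f_{0,i}$) is strictly dominated by the contribution from differentiating the integral, thus securing the required sign on the flux term.
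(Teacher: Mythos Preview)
Your proof is correct and follows essentially the same approach as the paper's own proof: compute $\partial_u\f_{0,i}$ and $\partial_u\f_{1,i}$ on the interface $\{Cu=|v|\}$, subtract, and reduce the sign condition to the inequality $c_{1,i}\le \beta_i/((C^{-2}+1)^{\alpha_i/2}\sqrt{C^2+1})$, which your choice $c_{1,i}=\beta_i/(2C)$ satisfies. The only minor difference is that you carry the final estimate through explicitly (showing it holds for all $C\ge 1$), whereas the paper simply asserts it holds for $C$ large enough.
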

\begin{proof} 
We first see that 
\begin{align*}
z^{-q_i}\partial_u \f_{0,i}=p_iu r^{p_i-2}.
\end{align*}
On the the other hand, on $\R_0 \cap \R_1 =\{Cu=|v|\}$ we have
\begin{align*}
z^{-q_i}\partial_u \f_{1,i}&= (p_i +\beta_i)u r^{p_i-2}- c_{1,i}r^{p_i-1}\Big|\frac{r}{v}\Big|^{\alpha_i}.
\end{align*}
The flux condition~\eqref{ineq:flux:R0R1} thus holds on $\{Cu=|v|\}$ provided we have
\begin{align*}
\beta_i \frac{u}{r}\ge c_{1,i}\Big|\frac{r}{v}\Big|^{\alpha_i},
\end{align*}
on the set $\{ Cu =|v|\}$.  Note that this is equivalent to
\begin{align*}
c_{1,i}\le \frac{\beta_i}{(C^{-2}+1)^{\alpha_i/2}\sqrt{C^2+1}}.
\end{align*}
Recalling $c_{1,i}$ as in~\eqref{cond:R1:c1}, the above inequality is clearly satisfied for all $C=C(h)>0$ large enough. This finishes the proof. 
\end{proof}
\subsection{Boundary-flux on $\R_1 \cap \R_2$} \label{subsec:flux:R1R2} 
\begin{lemma} \label{lem:flux:R1R2}
Let $\f_{1,i}$ and $\f_{2,i}$, $i=1,2,$ be given as in~\eqref{eqn:R1:phi1} and \eqref{eqn:R2:phi2}, respectively. Then for all $C=C(h)>0$ large enough
\begin{align} \label{ineq:flux:R1R2}
\big(\partial_u \f_{1,i}- \partial_u\f_{2,i}\big)\big|_{\R_1\cap \R_2}\le 0.
\end{align}
\end{lemma}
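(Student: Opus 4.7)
The plan is to follow the same computational pattern as in Lemma~\ref{lem:flux:R0R1}, parameterizing the interface $\R_1\cap\R_2$ by $\{u=-C|v|\}$ and expanding $\partial_u\f_{1,i}$ and $\partial_u\f_{2,i}$ explicitly along this locus. On the interface, $|u/v|=C$ and $r^2=(1+C^2)v^2$, so both expressions collapse to polynomials of the form $[\text{coefficient in }C]\cdot|v|^{p_i-1}z^{q_i}$, allowing a term-by-term comparison.

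First, I would compute $\partial_u\f_{2,i}$ directly from~\eqref{eqn:R2:phi2}. Since $\f_{2,i}$ is a sum of two monomials in $|u|$ and $|u/v|$, the evaluation at $u=-C|v|$ yields
\begin{equation*}
\partial_u\f_{2,i}\big|_{u=-C|v|}=-(p_i+\beta_i)A_{2,i}C^{p_i+\beta_i-1}|v|^{p_i-1}z^{q_i}-(p_i+\alpha_i)B_{2,i}C^{p_i+\alpha_i-1}|v|^{p_i-1}z^{q_i}.
\end{equation*}

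Second, I would differentiate $\f_{1,i}$ from the explicit representation~\eqref{eqn:R1:phi1}. Two contributions arise: the $u$-derivative of the $r^{p_i+\beta_i}$ prefactor, and a Leibniz-rule contribution from differentiating through the upper limit $u/|v|$ in the integral. The second contribution is the crucial one and produces the term $-c_{1,i}(1+C^2)^{(p_i+\alpha_i-1)/2}|v|^{p_i-1}z^{q_i}$, which gives the strictly negative leading-order piece that drives the inequality.

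Third, I would form the difference $\partial_u\f_{1,i}-\partial_u\f_{2,i}$ and use the definition~\eqref{eqn:R2:constantA2} of $A_{2,i}$ to cancel the $(p_i+\beta_i)$-coefficient terms coming from the prefactor of $\f_{1,i}$ and from the first summand of $\partial_u\f_{2,i}$. After this algebraic cancellation, only two terms survive at leading order in $C$: the integral-boundary contribution described above, and the $(p_i+\alpha_i)B_{2,i}$ piece from $\partial_u\f_{2,i}$. Using $B_{2,i}=c_{2,i}/(\alpha_i-\beta_i)$ from~\eqref{eqn:R2:constantB2} and $c_{2,i}=c_{1,i}/2$ from~\eqref{cond:R2:c2}, one computes that the coefficient of $C^{p_i+\alpha_i-1}|v|^{p_i-1}z^{q_i}$ in the difference reduces to
\begin{equation*}
\tfrac{c_{1,i}}{2}-c_{1,i}\bigl(1+O(C^{-2})\bigr)=-\tfrac{c_{1,i}}{2}+O(C^{-2}),
\end{equation*}
which is strictly negative for $C$ large. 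Choosing $C=C(h)$ sufficiently large absorbs the lower-order corrections and establishes~\eqref{ineq:flux:R1R2}.

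The main obstacle is bookkeeping the $C$-scalings of the many constants ($A_{2,i}, B_{2,i}, c_{1,i}$, and the integral $\int_{-C}^{1/C}(t^2+1)^{(\alpha_i-\beta_i-1)/2}\,dt$, which grows like $C^{\alpha_i-\beta_i}$ by Choice~\ref{cond:pq}) to verify that the algebraic cancellation produced by the definition of $A_{2,i}$ really does eliminate all $O(C^{p_i+\beta_i-1})$ and $O(C^{p_i+\alpha_i-1})$ terms aside from the designated leading negative contribution. Because the cancellation is exact by construction of $A_{2,i}$ as the matching constant between $\f_{1,i}$ and $\f_{2,i}$ at $u=-C|v|$, this step should succeed analogously to Lemma~\ref{lem:flux:R0R1}; the only new feature is the quadratic factor $(t^2+1)^{(\alpha_i-\beta_i-1)/2}$ evaluated at $t=-C$, which yields the clean factor $(1+C^2)^{(\alpha_i-\beta_i-1)/2}$ that combines with the prefactor of $\f_{1,i}$ to produce the sharp exponent $p_i+\alpha_i-1$.
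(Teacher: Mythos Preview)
Your approach is the same as the paper's: compute $\partial_u\f_{1,i}$ and $\partial_u\f_{2,i}$ on $\{u=-C|v|\}$, substitute the explicit formula~\eqref{eqn:R2:constantA2} for $A_{2,i}$, and compare $C$-scalings. However, your third step contains a real computational gap that breaks the claimed simplification.

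When you expand the first summand $-(p_i+\beta_i)A_{2,i}C^{p_i+\beta_i-1}|v|^{p_i-1}z^{q_i}$ using~\eqref{eqn:R2:constantA2}, the third piece of $A_{2,i}$, namely $-B_{2,i}C^{\alpha_i-\beta_i}$, produces an additional contribution $+(p_i+\beta_i)B_{2,i}C^{p_i+\alpha_i-1}|v|^{p_i-1}z^{q_i}$ at the same order as your ``$(p_i+\alpha_i)B_{2,i}$ piece.'' Only after combining these two does the $B_{2,i}$ coefficient collapse to $-(\alpha_i-\beta_i)B_{2,i}=-c_{2,i}$ in $\partial_u\f_{2,i}$, hence $+c_{2,i}=+c_{1,i}/2$ in the difference. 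Your stated pair of surviving terms (integral-boundary plus the bare $(p_i+\alpha_i)B_{2,i}$ piece) would instead give a coefficient $(p_i+\alpha_i)c_{1,i}/(2(\alpha_i-\beta_i))-c_{1,i}$, which is not $-c_{1,i}/2$ and whose sign is not obviously negative.

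Separately, your final justification that ``the cancellation is exact by construction of $A_{2,i}$'' is a misconception: $A_{2,i}$ is chosen so that $\f_{1,i}=\f_{2,i}$ on the interface, which says nothing about $\partial_u$-derivatives. The cancellation of the $(p_i+\beta_i)$-prefactor terms is only approximate, because $\partial_u r^{p_i+\beta_i}$ and $\partial_u|u|^{p_i+\beta_i}$ differ by a factor $(1+C^{-2})^{-1}$ on $\{u=-C|v|\}$. The residuals are precisely the paper's $I_1=O(C^{\beta_i-2})$ and $I_2=O(C^{\alpha_i-3})$ (in units of $|u|^{p_i-1}z^{q_i}$, using $c_{1,i}=\beta_i/(2C)$), and one must check explicitly that they are dominated by the leading negative term $-I_3\sim -\tfrac{\beta_i}{8}C^{\alpha_i-1}$. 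This is straightforward once written out, but it is not automatic from the matching of function values.
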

\begin{proof} We have
\begin{align*}
z^{-q_i}\partial_u \f_{2,i}&=-A_{2,i}(p_i+\beta_i)|u|^{p_i-1}\Big|\frac{u}{v}\Big|^{\beta_i}-B_{2,i}(p_i+\alpha_i)|u|^{p_i-1}\Big|\frac{u}{v}\Big|^{\alpha_i}.
\end{align*}
Also, recalling $A_{2,i}$ and $B_{2,i}$ as in~\eqref{eqn:R2:constantA2} and~\eqref{eqn:R2:constantB2}, respectively, we see that  on the boundary $u=-C|v|$
\begin{align*}
|u|^{1-p_i}z^{-q_i}\partial_u \f_{2,i}\big|_{\{u=-C|v|\}}
&= -(p_i+\beta_i)C^{\beta_i}(C^{-2}+1)^{p_i/2}\\
&\qquad -c_{1,i}(p_i+\beta_i)C^{\beta_i}(C^{-2}+1)^{\frac{p_i+\beta_i}{2} }\int_{-C}^{1/C}\close(t^2+1 )^{\frac{\alpha_i-\beta_i-1}{2}}\d t\\
&\qquad-(\alpha_i-\beta_i)C^{\alpha_i}B_{2,i}.
\end{align*}

Similarly, we can calculate the partial derivative with respect to $u$ of $\f_{1,i}$ to obtain 
\begin{align*}
|u|^{1-p_i}z^{-q_i}\partial_u\f_{1,i}\big|_{\{u=-C|v|\}}
&=-(p_i+\beta_i)C^{\beta_i}(C^{-2}+1)^{p_i/2-1}\\
&\qquad-c_{1,i}(p_i+\beta_i)C^{\beta_i}(C^{-2}+1)^{\frac{p_i+\beta_i}{2}-1}\int_{-C}^{1/C}\close (t^2+1 )^{\frac{\alpha_i-\beta_i-1}{2}}\d t\\
&\qquad-c_{1,i}(C^{-2}+1)^{\frac{\alpha_i+p_i-1}{2}}C^{\alpha_i}.
\end{align*}
Recalling that $B_{2,i}=c_{2,i}/(\alpha_i-\beta_i)$, we have
\begin{align*}
|u|^{1-p_i}z^{-q_i}\big(\partial_u\f_{1,i}-\partial_u\f_{2,i}\big)\big|_{\{u=-C|v|\}}&=\frac{p_i+\beta_i}{C^2+1}C^{\beta_i}(C^{-2}+1)^{p_i/2}\\
&\qquad+c_{1,i}\frac{p_i+\beta_i}{C^2+1}C^{\beta_i}(C^{-2}+1)^{\frac{p_i+\beta_i}{2}}\int_{-C}^{1/C}\close (t^2+1 )^{\frac{\alpha_i-\beta_i-1}{2}}\d t\\
&\qquad-\Big[c_{1,i}(C^{-2}+1)^{\frac{\alpha_i+p_i-1}{2}}-c_{2,i}\Big]C^{\alpha_i}\\
&=I_1+I_2-I_3.
\end{align*}
For $C>0$ large enough, we recall $c_{1,i}$ as in~\eqref{cond:R1:c1} and observe that
\begin{align*}
I_1=\frac{1}{C^{2-\beta_i}}O(p_i+\beta_i)\quad \text{and}\quad I_2\le\frac{1}{C^{3-\alpha_i}}O(\beta_i(p_i+\beta_i)).
\end{align*}
On the other hand, since $c_{2,i}=c_{1,i}/2$, it is straightforward to see that
\begin{align*}
I_3\ge \tfrac{1}{4}c_{1,i}C^{\alpha_i}=\tfrac{1}{8}\beta_iC^{\alpha_i-1},
\end{align*}
which together with the above asymptotic estimates imply~\eqref{ineq:flux:R1R2}. The proof is thus finished.

\end{proof}

\subsection{Boundary-flux on $\R_2\cap \R_3$} \label{subsec:flux:R2R3} Finally, we check the last boundary-flux contribution has the appropriate sign.  
\begin{lemma} \label{lem:flux:R2R3}
Let $\f_{2,i}$ and $\f_{3,i}$, $i=1,2,$ be given as in~\eqref{eqn:R2:phi2} and \eqref{eqn:R3:phi3}, respectively. Then for all $C=C(h)>0, \eta_*=\eta_*(C)>0, r_*=r_*(C,\eta_*)>0$ large enough
\begin{align} \label{ineq:flux:R2R3}
\big(\partial_v\f_{2,i}-\partial_ v \f_{3,i}\big)\big|_{\{|u|^{1/2} v = \eta_*\}}\le 0\quad\text{and}\quad\big(\partial_v\f_{3,i}-\partial_ v \f_{2,i}\big)\big|_{\{|u|^{1/2}v=-\eta_*\}}\le 0.
\end{align}
\end{lemma}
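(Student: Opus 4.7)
The plan is a direct asymptotic computation at the two interfaces $\{\eta=\pm\eta_*\}$ (where $\eta=|u|^{1/2}v$), using the explicit formulas~\eqref{eqn:R2:phi2} for $\f_{2,i}$ and~\eqref{eqn:R3:phi3:simplified} for $\f_{3,i}$, together with the fine boundary asymptotic~\eqref{ineq:R3:G'(eta_*)} from Lemma~\ref{lem:R3:G}. First I note a symmetry that halves the work: the OU-type SDE for $\eta_t$ defining $T_3$ is invariant under $\eta\mapsto-\eta$, so $G_s$ is even, and consequently so is $\f_{3,i}$; since $\f_{2,i}$ depends on $v$ only through $|v|$, it too is even. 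Hence $\partial_v(\f_{2,i}-\f_{3,i})$ is odd in $v$, so the two inequalities of~\eqref{ineq:flux:R2R3} are equivalent, and I focus on the boundary $\eta=+\eta_*$.

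Differentiating the explicit formulas and invoking the continuous-matching identities $A_{3,i}\eta_*^{\beta_i}=A_{2,i}$, $B_{3,i}\eta_*^{\alpha_i}=B_{2,i}$ from~\eqref{cond:R3:A3.B3} (which encode continuity of $\Phi_\text{O}$ across $\{|\eta|=\eta_*\}$), I express
\[
\partial_v(\f_{2,i}-\f_{3,i})\big|_{\eta=\eta_*}= -A_{3,i}\mathcal{E}_\beta\,|u|^{p_i+\frac{3}{2}\beta_i+\frac{1}{2}}z^{q_i}-\mathcal{E}_\alpha\,|u|^{p_i+\frac{3}{2}\alpha_i+\frac{1}{2}}z^{q_i},
\]
with the two coefficients
\[
\mathcal{E}_\beta:=\frac{\beta_i}{\eta_*}+G'_{\gamma_i}(\eta_*),\qquad \mathcal{E}_\alpha:=\frac{B_{3,i}\alpha_i}{\eta_*}+(B_{3,i}+C_{3,i})G'_{\gammah_i}(\eta_*).
\]
Applying $G'_s(\eta_*)=-s/((h+\tfrac{3}{2})\eta_*)+o(\eta_*^{-1})$, the coefficient $\mathcal{E}_\beta$ collapses to $o(\eta_*^{-1})$ thanks to the relation $\gamma_i=(h+\tfrac{3}{2})\beta_i$ built into~\eqref{cond:R3:gamma_i.gammahat_i}. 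For $\mathcal{E}_\alpha$, combining $\gammah_i=\beta_i+(h+\tfrac{1}{2})\alpha_i$ with $B_{3,i}=c_{2,i}/((\alpha_i-\beta_i)\eta_*^{\alpha_i})$ and $C_{3,i}\gammah_i=c_{3,i}=c_{2,i}/(2\eta_*^{\alpha_i})$, a short algebraic reduction shows the leading-order numerator is exactly $B_{3,i}(\alpha_i-\beta_i)-c_{3,i}=c_{2,i}/(2\eta_*^{\alpha_i})$, whence
\[
\mathcal{E}_\alpha=\frac{c_{2,i}}{2(h+\tfrac{3}{2})\eta_*^{\alpha_i+1}}+o(\eta_*^{-\alpha_i-1})>0
\]
for $\eta_*$ large. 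Thus the $\alpha_i$-term already delivers a strictly negative contribution of order $|u|^{p_i+\frac{3}{2}\alpha_i+\frac{1}{2}}\eta_*^{-\alpha_i-1}$.

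It remains to absorb the $\beta_i$-correction. Since $\alpha_i>\beta_i$ and $|u|\geq r_*/\sqrt{1+C^{-2}}$ throughout $\R_3$, the gain $|u|^{3(\alpha_i-\beta_i)/2}$ becomes arbitrarily large as $r_*\to\infty$. Fixing first $C=C(h)$, then $\eta_*=C\sqrt{\kappa_2}$, and finally $r_*$ large enough relative to a fixed power of $\eta_*$ ensures that the $\alpha_i$-term dominates the $\beta_i$-correction uniformly on the boundary inside $\R_3$, which proves the first inequality of~\eqref{ineq:flux:R2R3}. The second then follows by the evenness in $v$ observed at the outset.

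The main obstacle is that both $\mathcal{E}_\beta$ and $\mathcal{E}_\alpha$ require precise cancellations whose correct signs are not automatic; they depend delicately on the parameter choices already made in Section~\ref{sec:Lyapunov}. In particular, the strict positivity of $\mathcal{E}_\alpha$ rests on the factor $1/2$ in $c_{3,i}=c_{2,i}/(2\eta_*^{\alpha_i})$ from~\eqref{cond:R3:c3}: any constant larger than $B_{3,i}(\alpha_i-\beta_i)$ in place of $c_{3,i}$ would reverse the sign of the boundary-flux. Once these cancellations are set up correctly, no tools beyond the explicit formulas and the asymptotic~\eqref{ineq:R3:G'(eta_*)} of Lemma~\ref{lem:R3:G} are needed.
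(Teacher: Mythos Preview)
Your proof is correct and follows essentially the same route as the paper: reduce by symmetry to the boundary $\eta=\eta_*$, compute both $v$-derivatives there, use the asymptotic $G'_s(\eta_*)=-\tfrac{s}{h+3/2}\eta_*^{-1}+o(\eta_*^{-1})$ together with $\gamma_i=(h+\tfrac32)\beta_i$ to cancel the $\beta_i$-leading terms, and then verify that the $\alpha_i$-coefficient reduces to the strictly positive quantity $\tfrac{c_{2,i}}{2(h+3/2)\eta_*^{\alpha_i+1}}$. Your explicit use of $r_*$ large (so that $|u|^{3(\alpha_i-\beta_i)/2}$ absorbs the residual $\beta_i$-error) makes precise a step the paper leaves somewhat implicit, but otherwise the arguments coincide.
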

\begin{proof}
We shall prove only the claimed bound for the part of the boundary $\R_2\cap \R_3$ where $|u|^{1/2} v=\eta_*$.  By symmetry, an analogous argument can be carried out when $|u|^{1/2} v=-\eta_*$.  First observe that 
\begin{align*}
z^{-q_i}\partial_v\f_{2,i}\big|_{\{|u|^{1/2}v=\eta_*\}}&=-\frac{\beta_i}{\eta_*^{\beta_i+1}}A_{2,i}|u|^{p_i+\frac{3}{2}\beta_i+\frac{1}{2}}-\frac{\alpha_i}{\eta_*^{\alpha_i+1}}B_{2,i}|u|^{p_i+\frac{3}{2}\alpha_i+\frac{1}{2}}.
\end{align*} 
On the other hand, recalling the expression for $\f_{3,i}$ in~\eqref{eqn:R3:phi3:simplified}, we have
\begin{align*}
z^{-q_i}\partial_v\f_{3,i}\big|_{\{|u|^{1/2}v=\eta_*\}}&=(B_{3,i}+C_{3,i})|u|^{p_i+\frac{3}{2}\alpha_i+\frac{1}{2}}G'_{\gammah_i} (\eta_*)+A_{3,i}|u|^{p_i+\frac{3}{2}\beta_i+\frac{1}{2}}G'_{\gamma_i} (\eta_*).
\end{align*}
In view of~\eqref{ineq:R3:G'(eta_*)} together with $\gamma_i=(h+\frac{3}{2})\beta_i$ and $A_{3,i}=A_{2,i}\eta_*^{-\beta_i}$, we have the following asymptotic behavior as $\eta_*\to\infty$
\begin{align*}
A_{3,i}|u|^{p_i+\frac{3}{2}\beta_i+\frac{1}{2}}G'_{\gamma_i} (\eta_*)=-\frac{\beta_i}{\eta_*^{\beta_i+1}}A_{2,i}|u|^{p_i+\frac{3}{2}\beta_i+\frac{1}{2}}+o\big(\eta_*^{-\beta_i-1}\big)|u|^{p_i+\frac{3}{2}\beta_i+\frac{1}{2}}.
\end{align*}
Also, recalling $\gammah_i=\beta_i+(h+\frac{1}{2})\alpha_i$,
\begin{align*}
(B_{3,i}+C_{3,i})|u|^{p_i+\frac{3}{2}\alpha_i+\frac{1}{2}}G'_{\gammah_i} (\eta_*)&=-\frac{\beta_i+(h+\frac{1}{2})\alpha_i}{(h+\frac{3}{2})\eta_*}(B_{3,i}+C_{3,i})|u|^{p_i+\frac{3}{2}\alpha_i+\frac{1}{2}}\\
&\qquad+o\Big(\frac{1}{\eta_*}\Big)(B_{3,i}+C_{3,i})|u|^{p_i+\frac{3}{2}\alpha_i+\frac{1}{2}}.
\end{align*}
Combining the above estimates, we observe that the claimed flux condition holds on $\{|u|^{1/2}v=\eta_*\}$ if 
\begin{align*}
\frac{\alpha_i}{\eta_*^{\alpha_i}}B_{2,i}-\frac{\beta_i+(h+\frac{1}{2})\alpha_i}{h+\frac{3}{2}}(B_{3,i}+C_{3,i})\ge c(B_{3,i}+C_{3,i}),
\end{align*}
for some positive constant $c$ independent of $\eta_*$. To see this, we recall that
$$B_{3,i}=\frac{B_{2,i}}{\eta_*^{\alpha_i}}=\frac{c_{2,i}}{(\alpha_i-\beta_i)\eta_*^{\alpha_i}}\quad\text{and}\quad C_{3,i}=\frac{c_{3,i}}{\beta_i+(h+\frac{1}{2})\alpha_i}=\frac{c_{2,i}}{2(\beta_i+(h+\frac{1}{2})\alpha_i)\eta_*^{\alpha_i}},$$
whence
\begin{align*}
B_{3,i}+C_{3,i}=\Big[\frac{1}{\alpha_i-\beta_i}+\frac{1}{2(\beta_i+(h+\frac{1}{2})\alpha_i)}\Big]\frac{c_{2,i}}{\eta_*^{\alpha_i}}.
\end{align*}
Also,
\begin{align*}
\frac{\alpha_i}{\eta_*^{\alpha_i}}B_{2,i}-\frac{\beta_i+(h+\frac{1}{2})\alpha_i}{h+\frac{3}{2}}(B_{3,i}+C_{3,i})&=\frac{c_{2,i}}{2(h+\frac{3}{2})\eta_*^{\alpha_i}}.
\end{align*}
We therefore conclude the flux condition~\eqref{ineq:flux:R2R3} on the boundary $|u|^{1/2} v = \eta_*$. This finishes the proof.  

\end{proof}

\section{The Proof of Theorem~\ref{thm:geometric.ergodicity}} \label{sec:proof-thm-ergodicity}

In Proposition~\ref{prop:Lyapunov:uvz}, stated and proven next, we establish a more global Lyapunov-type estimate for the function $\Phi$ in the variables $(u,v,z)$. As a consequence, upon changing back to $(x,y,z)-$coordinates, we will be able to establish Proposition~\ref{prop:Lyapunov:xyz}.
\begin{proposition} \label{prop:Lyapunov:uvz} Let $ \Phi$ be given by Definition~\ref{def:Phi} and $\tau_R$ be the stopping time given by
\begin{align*}
\tau_R=\inf\{t\ge 0: \ubf_t\not\in\domain_R\}.
\end{align*}
Then, there exists a constant $c_*$ sufficiently small such that for all $\kappa_1$, $\kappa_2>0$ satisfying
$$\Big|\frac{\kappa_1}{\kappa_2}-1\Big|<c_*,$$
there exists positive constants $c_1$, $c_2$ and $c_3$ and $R_*$ such that for all $t$ and $R$ 
\begin{equation}\label{ineq:E.Phi:uvz}
\begin{aligned}
\E_{\ubf}\Phi(\ubf_{t\mi\tau_R})
&\le \Phi(\ubf)-c_1\E_{\ubf} \int_0^{t\mi\tau_R}\close\frac{1}{z_s^{2/3}}\big(r_s  ^{p_1+1} +r_s  ^{p_2+1}z_s^{q_2}\big)\mathbf{1}\{ r_s \ge R_*\}\emph{d} s\\
&\qquad -c_2\E_{\ubf} \int_0^{t\mi\tau_R}\close \frac{1}{z_s^{2/3}}\emph{d} s+c_3t.
\end{aligned}
\end{equation}

\end{proposition}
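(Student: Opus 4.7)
The plan is to apply the generalized It\^o/Tanaka formula from Appendix~\ref{sec:Tanaka} to $\Phi$ along the stopped process $(\ubf_{t\wedge\tau_R}, k_{t\wedge\tau_R})$ and to patch the region-by-region bounds of Section~\ref{sec:Lyapunov:proof} with the interface-flux estimates of Section~\ref{sec:flux}. Since $\Phi$ is globally continuous and piecewise $C^2$ on the subregions $\R_0,\R_1,\R_2,\R_3,\RI$, the formula produces, after taking expectations and using the martingale property of the stochastic integrals,
\begin{equation*}
\E_\ubf\Phi(\ubf_{t\wedge\tau_R})=\Phi(\ubf)+\E_\ubf\int_0^{t\wedge\tau_R}\L\Phi(\ubf_s)\,\d s+\E_\ubf\int_0^{t\wedge\tau_R}\Q\Phi(\ubf_s)\,\d k_s+\mathcal{F},
\end{equation*}
where $\mathcal{F}$ collects the interface local-time contributions on $\R_0\cap\R_1$, $\R_1\cap\R_2$, and $\R_2\cap\R_3$. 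Because $\lambda_1(r/r_*)\equiv 1$ in a neighborhood of each of these interfaces once $r_*$ is large, Lemmas~\ref{lem:flux:R0R1}--\ref{lem:flux:R2R3} give $\mathcal{F}\le 0$ and this term can be discarded.

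The core work is to bound $\L\Phi$ pointwise. Writing $\Phi=\lambda_1(r/r_*)\fO+D\psi-\tfrac{D\alpha_h J}{1-h}\log z+A z$, the elementary computation $\L(\log z)=(1-h)u/z^{2/3}$ gives $\L(-\tfrac{D\alpha_h J}{1-h}\log z)=-D\alpha_h J u/z^{2/3}$, which exactly cancels the leading $+D\alpha_h J u/z^{2/3}$ term produced by Corollary~\ref{corollary:psi}, leaving
\begin{equation*}
\L\bigl(D\psi-\tfrac{D\alpha_h J}{1-h}\log z\bigr)\le -D\min\bigl\{\tfrac{m\alpha_h}{2},\tfrac{\kappa_1+\kappa_2}{4}\bigr\}z^{-2/3}+DD_1.
\end{equation*}
This furnishes the $-c_2/z^{2/3}$ dissipation throughout $\domain$. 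On $\{r\ge 2r_*\}$ one has $\lambda_1\equiv 1$, so summing Lemmas~\ref{lem:R0:Lyapunov}--\ref{lem:R3:Lyapunov} for $i=1,2$ gives $\L\fO\le -c\bigl(r^{p_1+1}z^{-2/3}+r^{p_2+1}z^{q_2-2/3}\bigr)$, which is precisely the first integrand in~\eqref{ineq:E.Phi:uvz} with $R_*=2r_*$. The residual $\L(A z)=A(1-h)u z^{1/3}$ is $O(|u|)$ uniformly on $\domain$ and is absorbed by the outer dissipation on $\{r\ge R_*\}$ and into the constant $c_3$ on $\{r<R_*\}$. For the boundary term at $\{z=1\}$, direct computation yields $\Q(-\tfrac{D\alpha_h J}{1-h}\log z)|_{z=1}=\tfrac{D\alpha_h J}{1-h}$ and $\Q(Az)|_{z=1}=-A$; combining with $\Q\psi\le D_2$ from Corollary~\ref{corollary:psi} and $\Q(\f_{j,1}+\f_{j,2})|_{z=1}\le 0$ on $\{r\ge 2r_*\}$ from Lemmas~\ref{lem:R0:Lyapunov}--\ref{lem:R3:Lyapunov}, a sufficiently large choice of $A$ makes $\Q\Phi|_{z=1}\le 0$, so the boundary integral (non-decreasing in $k_s$) drops out of the upper bound.

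The main obstacle is the bookkeeping on the transition annulus $\{r_*\le r\le 2r_*\}$, where $\lambda_1'(r/r_*)$ and $\lambda_1''(r/r_*)$ are nonzero and generate cross-derivative terms in $\L(\lambda_1\fO)$ (and in $\Q(\lambda_1\fO)|_{z=1}$) that do not manifestly carry a favorable sign. The saving observation is that these cross-terms, although they carry a $1/z^{2/3}$ factor from the operator $\L$, are uniformly bounded in $(u,v)$ on this compact annulus; they are therefore absorbed by the strictly negative averaging dissipation $-D\min\{\cdots\}/z^{2/3}$ once $D$ is taken sufficiently large. The parameter choices must be made in a strict order: first fix $c_*>0$ small so that Lemma~\ref{lem:averaging.Lyapunov} and Corollary~\ref{corollary:psi} apply; then fix $C$ and $r_*$ large enough to validate Section~\ref{sec:Lyapunov:proof}; then fix $D$ large enough to dominate the bounded cutoff cross-terms and the $\L(A z)$ correction on $\RI$; and finally fix $A$ large enough to force $\Q\Phi|_{z=1}\le 0$. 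With these choices and $R_*=2r_*$, combining the four pieces above yields~\eqref{ineq:E.Phi:uvz}.
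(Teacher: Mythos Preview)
Your proof follows the same architecture as the paper's: apply the Tanaka formula, use the local bounds of Section~\ref{sec:Lyapunov:proof} together with the flux lemmas of Section~\ref{sec:flux} to discard the interface contributions, cancel the $D\alpha_h J u/z^{2/3}$ term against $\L(-\tfrac{D\alpha_h J}{1-h}\log z)$, absorb the annulus cross-terms with the $\psi$-dissipation, and finally choose $A$ to kill the boundary integral. This is exactly the paper's route.

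There is, however, one genuine slip in your bookkeeping: the explicit choice $R_*=2r_*$. The constant $A$ you need at the end depends on $D$, and $D$ must dominate the cutoff cross-terms on the annulus $\{r_*\le r\le 2r_*\}$. Those cross-terms contain $\fO\,\L\lambda_1$, which on the annulus is of order $r_*^{p_2+1}/z^{2/3}$, so $c_2$ (and hence $D$ and $A$) grows like a positive power of $r_*$. The outer dissipation constant in Lemmas~\ref{lem:R0:Lyapunov}--\ref{lem:R3:Lyapunov}, by contrast, does \emph{not} grow with $r_*$. Consequently the inequality you need on $\{r\ge R_*\}$, namely $c\,r^{p_1+1}z^{-2/3}\ge A(1-h)|u|z^{1/3}$, can fail at $r=2r_*$. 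The paper avoids this by introducing $R_*$ as a fresh large parameter chosen \emph{after} $A$ is fixed, large enough that $c\,R_*^{p_1}\ge A(1-h)$; the contribution of $A(1-h)uz^{1/3}$ on $\{r<R_*\}$ then lands in $c_3 t$. Your ordering ``fix $D$ large \ldots then fix $A$ large'' is correct, but $R_*$ must come last and be allowed to depend on $A$, not pinned to $2r_*$. With that single adjustment your argument goes through.
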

\begin{proof}

We aim to combine the estimates in Section~\ref{sec:Lyapunov:proof} and Section~\ref{sec:flux} with the Tanaka formula~\eqref{eqn:T3def} to establish~\eqref{ineq:E.Phi:uvz}. Recalling $\Phi$ in \eqref{eqn:Phi}, considering $\lambda_1\fO$, in view of the estimates from Section~\ref{sec:outer:lyapprop} and Section~\ref{sec:flux}, together with Tanaka formula~\eqref{eqn:Tanaka}, there exist positive constants $c_1$, $c_2$ such that
\begin{equation} \label{eqn:E.Phi_O}
\begin{aligned}
\E_{\ubf}\lambda_1\fO(\ubf_{t\mi\tau_R})
&\le \lambda_1\fO(\ubf) -c_1\E_{\ubf} \int_0^{t\mi\tau_R}\close\frac{1}{z_s^{2/3}}\big( r_s  ^{p_1+1}z^{q_1}_s+ r_s  ^{p_2+1}z_s^{q_2}\big)\mathbf{1}\{ r_s \ge r_*\}\d s\\
&\qquad+c_2\E_{\ubf} \int_0^{t\mi\tau_R}\close \frac{1}{z_s^{2/3}}\mathbf{1}\{ r_s \le 2r_*\}\d s+c_2\E_{\ubf} \int_0^{t\mi\tau_R}\close \mathbf{1}\{ r_s \le 2r_*, z_s=1\}\d k_s.
\end{aligned}
\end{equation}
Next considering $\psi$, we set
\begin{align} \label{cond:D}
D=\frac{2c_2}{ \min\{ \frac{m \alpha_h}{2}, \frac{\kappa_1 + \kappa_2}{4}\}}.
\end{align}
In light of Corollary~\ref{corollary:psi}, we obtain \begin{equation} \label{eqn:E.D.psi}
\begin{aligned}
&\E_{\ubf}D\psi(\ubf_{t\mi\tau_R})\\
&\le D\psi(\ubf) +D\alphah J\E_{\ubf} \int_0^{t\mi\tau_R}\close \frac{ u_s }{z_s^{2/3}}\d s- \E_{\ubf} \int_0^{t\mi\tau_R}\!\frac{2c_2}{z_s^{2/3}}\d s+DD_1t+DD_2\E_{\ubf} \int_0^{t\mi\tau_R}\close \d k_s.
\end{aligned}
\end{equation}
Concerning the $z$-terms on the right-hand side of~\eqref{eqn:Phi}, we set
\begin{align}\label{cond:A}
A=\frac{D\alpha_h J}{1-h}+DD_2+c_2.
\end{align}
A short computation gives
\begin{align} \label{eqn:Lz}
\L \Big(-\frac{D\alpha_h J}{1-h} \log(z)+Az\Big)&=-D\alpha_h J\frac{u}{z^{2/3}}+(1-h)Auz^{1/3},\\
\Q\Big(-\frac{D\alpha_h J}{1-h} \log(z)+Az\Big)&=\frac{D\alpha_h J}{1-h} -A=-DD_2-c_2\label{eqn:Qz}.
\end{align}
We now collect~\eqref{eqn:E.Phi_O},~\eqref{eqn:E.D.psi},~\eqref{eqn:Lz} and~\eqref{eqn:Qz} to arrive at the estimate
\begin{align*}
\E_{\ubf}\Phi(\ubf_{t\mi\tau_R})
&\le \Phi(\ubf)-c_1\E_{\ubf} \int_0^{t\mi\tau_R}\close\frac{1}{z_s^{2/3}}\big( r_s  ^{p_1+1}z^{q_1}_s+ r_s  ^{p_2+1}z_s^{q_2}\big)\mathbf{1}\{ r_s \ge r_*\}\d s\\
&\qquad -c_2\E_{\ubf} \int_0^{t\mi\tau_R}\close \frac{1}{z^{2/3}_s}\d s+(1-h)A\E_{\ubf} \int_0^{t\mi\tau_R}\close \close u_sz^{1/3}_s\d s+DD_1t.
\end{align*}
In the above, the $\d k_t$ terms were canceled due to the choice of $A$ in~\eqref{cond:A}. Finally, we note that $uz^{1/3}$ is clearly dominated by $r^{p_1+1}z^{-2/3}$ when $r$ is sufficiently large, say, for all $r\ge R_*$ whence
\begin{align*}
\E_{\ubf}\Phi(\ubf_{t\mi\tau_R})
&\le \Phi(\ubf)-c_1\E_{\ubf} \int_0^{t\mi\tau_R}\close\frac{1}{z_s^{2/3}}\big( r_s  ^{p_1+1}z^{q_1}_s+ r_s  ^{p_2+1}z_s^{q_2}\big)\mathbf{1}\{ r_s \ge R_*\}\d s\\
&\qquad -c_2\E_{\ubf} \int_0^{t\mi\tau_R}\close \frac{1}{z^{2/3}_s}\d s+DD_1t.
\end{align*}
Recalling $q_1=0$ by Choice~\ref{cond:pq}, this establishes~\eqref{ineq:E.Phi:uvz}, hence finishing the proof.\end{proof}

We are now in a position to prove Theorem~\ref{thm:geometric.ergodicity} and Proposition~\ref{prop:Lyapunov:xyz}.
\begin{proof}[Proof of Theorem~\ref{thm:geometric.ergodicity} and Proposition~\ref{prop:Lyapunov:xyz}]
As discussed briefly in Section~\ref{sec:mainresult}, upon changing back to the variables $(x,y,z)$, namely, by setting 
\begin{align*}
\Psi(x,y,z):=\Phi\Big(\frac{x}{z^{1/3}},\frac{y}{z^{1/3}} ,z\Big),
\end{align*}
where $\Phi$ is the Lyapunov function from Proposition~\ref{prop:Lyapunov:uvz}, Proposition~\ref{prop:Lyapunov:xyz} (a) immediately follows after noting that $\Phi(u,v,z)\rightarrow \infty$ as $|(u,v)|+ z^{-1}\rightarrow \infty$. Also, we observe that by~\eqref{ineq:E.Phi:uvz}, setting $r=r(x,y)$, we have
\begin{equation}\label{ineq:E.Phi:xyz}
\begin{aligned}
\E_{\xbf}\Psi(\xbf_{t\mi\tau_R})
&\le \Psi(\xbf)-c_1\E_{\xbf} \int_0^{t\mi\tau_R}\!\frac{1}{z_s}\big(r_s  ^{p_1+1}z^{-\frac{1}{3}p_1} +r_s  ^{p_2+1}z_s^{q_2-\frac{1}{3}p_2}\big)\mathbf{1}\{ r_s \ge R_*\}\d s\\
&\qquad -c_2\E_{\xbf} \int_0^{t\mi\tau_R}\close \frac{1}{z_s^{2/3}}\d s+c_3t.
\end{aligned}
\end{equation}
We note that as $r\to\infty$ and/or $z\to 0$, $\Psi(x,y,z)$ is dominated by $r^{p_1}z^{-p_1/3}+r^{p_2}z^{q_2-p_1/3}-\log(z)$. Proposition \ref{prop:Lyapunov:xyz} parts (b) and (c) then readily follow from~\eqref{ineq:E.Phi:xyz}. By combining with Proposition~\ref{prop:minorization}, this proves Theorem~\ref{thm:geometric.ergodicity} (a) and (b). It remains to establish the moment bound~\eqref{ineq:int.r^(p+1).pi(dx)} in Theorem~\ref{thm:geometric.ergodicity} (c).

Let $\pi$ be the unique invariant measure for~\eqref{eqn:xyz}.  We first show that
\begin{align} \label{ineq:int.z^(-2/3).pi(dx)}
\int_\domain \frac{1}{z^{2/3}}\pi(\d \xbf)<\infty.
\end{align}
To see this,  in light of~\eqref{ineq:E.Phi:xyz}, we infer that
\begin{align*}
\frac{1}{t}\int_0^t \E_\xbf\frac{1}{z^{2/3}_s}\d s\le \frac{1}{c_2 t}\Psi(\xbf)+\frac{c_3}{c_2},
\end{align*}
whence
\begin{align*}
\limsup_{t\to\infty} \frac{1}{t}\int_0^t \E_\xbf\frac{1}{z^{2/3}_s}\d s\le \frac{c_3}{c_2}.
\end{align*}
On the other hand, for every $R>0$, by Birkhoff's Ergodic Theorem, it holds that
\begin{align*}
\frac{1}{t}\int_0^t\E_\xbf \Big(\frac{1}{z^{2/3}_s}\mi R\Big) \d s\rightarrow \int_\domain \Big(\frac{1}{z^{2/3}}\mi R\Big)\pi(\d\xbf),\quad t\to\infty.
\end{align*}
By the Monotone Convergence Theorem ($R\rightarrow \infty$),~\eqref{ineq:int.z^(-2/3).pi(dx)} readily follows.

Next, consider the parameters $(p_i,q_i,\alpha_i,\beta_i)$, $i=3,4$ such that $p_i,q_i>0$ and conditions~\eqref{cond:alphai.betai},~\eqref{cond:p2.q2} are satisfied, namely,
\begin{equation} \label{cond:p3q3p4q4}
\begin{aligned}
&0<\alphah p_i-(1-h)q_i=\beta_i<\alpha_i<1,\\
&q_4>\tfrac{1}{3}p_4+\tfrac{1}{2}\alpha_4,\qquad p_4+\tfrac{3}{2}\alpha_4>p_3+\tfrac{3}{2}\alpha_3,
\end{aligned}
\end{equation}
and that
\begin{equation} \label{cond:q3-p_3/3=1}
q_3-\tfrac{1}{3}p_3=1.
\end{equation}
We then consider $\lambda_1\Phitil_\text{O}(u,v,z)$ where $\Phitil_\text{O}$  essentially has the same expression as $\fO$ with $(p_i,q_i,\alpha_i,\beta_i)$, $i=1,2$, replaced by $(p_i,q_i,\alpha_i,\beta_i)$, $i=3,4$, above. Similar to~\eqref{eqn:E.Phi_O}, we have the estimate for all $t,\,R>0$
\begin{align*}
\E_{\ubf}\lambda_1\Phitil_\text{O}(\ubf_{t\mi\tau_R})
&\le \lambda_1\Phitil_\text{O}(\ubf) -c_4\E_{\ubf} \int_0^{t\mi\tau_R}\close\frac{1}{z_s^{2/3}}\big( r_s  ^{p_3+1}z^{q_3}_s+ r_s  ^{p_4+1}z_s^{q_4}\big)\mathbf{1}\{ r_s \ge r_*\}\d s\\
&\qquad+c_5\E_{\ubf} \int_0^{t\mi\tau_R}\close \frac{1}{z_s^{2/3}}\d s+c_5\E_{\ubf} \int_0^{t\mi\tau_R}\close \mathbf{1}\{ r_s \le 2r_*, z_s=1\}\d k_s+c_5t,
\end{align*}
for some positive constants $c_4$ and $c_5$.  By setting 
\begin{align*}
\Phitil(u,v,z)=\lambda_1\ftil_\text{O}(u,v,z)+c_5z,
\end{align*}
we have (after sending $R\to\infty$) for some $R_*$ much larger than $r_*$,
\begin{align*}
\E_{\ubf}\Phitil(\ubf_{t})
&\le \Phitil(\ubf) -c_4\E_{\ubf} \int_0^{t}\close\frac{1}{z_s^{2/3}}\big( r_s  ^{p_3+1}z^{q_3}_s+ r_s  ^{p_4+1}z_s^{q_4}\big)\mathbf{1}\{ r_s \ge R_*\}\d s+c_5\E_{\ubf} \int_0^{t} \frac{1}{z_s^{2/3}}\d s+c_5t.
\end{align*}
Translating back to $(x,y,z)$ and setting $\widetilde{\Psi}(x,y,z)=\Phitil(xz^{-1/3},yz^{-1/3},z)$, we obtain (recalling $q_3-\frac{1}{3}p_3=1$)
\begin{align*}
&\E_{\xbf}\widetilde{\Psi} (\xbf_{t})\\
&\le \widetilde{\Psi} (\xbf) -c_4\E_{\xbf} \int_0^{t}\,\frac{1}{z_s}\big( r_s  ^{p_3+1}z^{q_3-\frac{1}{3}p_3}_s+ r_s  ^{p_4+1}z_s^{q_4-\frac{1}{3}q_4}\big)\mathbf{1}\{ r_s \ge R_*\}\d s+c_5\E_{\xbf} \int_0^{t} \frac{1}{z_s^{2/3}}\d s+c_5t\\
&\le \widetilde{\Psi} (\xbf) -c_4\E_{\xbf} \int_0^{t}\big(r_s  ^{p_3+1}+ r_s  ^{p_4+1}z_s^{q_4-\frac{1}{3}q_4-1}\big)\mathbf{1}\{ r_s \ge R_*\}\d s+c_5\E_{\xbf} \int_0^{t}\frac{1}{z_s^{2/3}}\d s+c_5t.
\end{align*}
It follows that
\begin{align*}
\frac{1}{t}\int_0^t \E_\xbf r_s^{p_3+1}\le \frac{1}{c_4t}\widetilde{\Psi}(\xbf)+\frac{c_5}{c_4t}\int_0^t \E_\xbf\frac{1}{z_s^{2/3}}\d s+\frac{c_5}{c_4}.
\end{align*}
Since $z^{-2/3}$ is integrable with respect to $\pi$ (cf.~\eqref{ineq:int.z^(-2/3).pi(dx)}), we invoke Birkhoff's Ergodic Theorem again to obtain for all $\xbf\in\domain$
\begin{align*}
\limsup_{t\to\infty} \frac{1}{t}\int_0^t \E_\xbf r_s^{p_3+1}\d s\le \frac{c_5}{c_4}\int_\domain\frac{1}{z^{2/3}}\pi(\d\xbf)+\frac{c_5}{c_4}.
\end{align*}
Reasoning as in the argument for~\eqref{ineq:int.z^(-2/3).pi(dx)}, we arrive at
\begin{align*}
\int_\domain r^{p_3+1}\pi(\d\xbf)<\infty.
\end{align*}
In other words, setting $\lambda:=p_3+1$, we see that $(x^2+y^2)^{\lambda/2}=r^{p_3+1}\in L^1(\pi)$. 

To see the bound $\lambda<2/h$, we recall from~\eqref{cond:p3q3p4q4} and~\eqref{cond:q3-p_3/3=1} together with $\alphah=\frac{1+2h}{3}$ as in~\eqref{def:alpha}. It follows that in terms of $\beta_3$ and $h$,
\begin{align*}
p_3=\tfrac{1}{h}(\beta_3+1-h)\quad\text{and}\quad q_3=\tfrac{1}{3h}(\beta_3+1+2h).
\end{align*} 
In particular, since $\beta_3\in(0,1)$, $p_3<2/h-1$, whence $\lambda=p_3+1<2/h$ as claimed. The proof is thus finished.
\end{proof}

\section*{ Acknowlegement} The authors would like to thank David Lipshutz and Jiajun Tong for fruitful discussions on the topic of this paper. All authors are grateful for support from the Department of Mathematics at Iowa State University, where this work started, as well as for support through grants DMS-1612898 and  DMS-1855504 from the National Science Foundation. The authors also would like to thank the anonymous reviewer for their valuable comments and suggestions.

\appendix

\section{Well-posedness} \label{sec:wellposed}

In this section, we briefly recap the framework used to establish well-posedness for SDEs with reflection as in \cite{khasminskii2011stochastic,lions1984stochastic,
pilipenko2014introduction} but tailored to our system~\eqref{eqn:xyz}.

For each $R>0$, we consider a smooth function $\g_R$ such that
\begin{align*}
\g_R(\xbf)=\begin{cases}
1,\quad \xbf\in \domain_{2R},\\
0,\quad \xbf\not\in\domain_{2R+1},
\end{cases}
\end{align*}
where $\domain_R$ is as in~\eqref{eqn:OR}. Next, consider the following truncated version of~\eqref{eqn:xyz}
\begin{equation}\label{eqn:xyz:truncated}
\begin{aligned}
\d x_t &=-\gamma x_t\, \d t-\frac{hx_t^2-y_t^2}{z_t}\g_R(\xbf_t)\, \d t+\sqrt{2\kappa_1}\, \d W^1_t,\\
\d y_t&= -\gamma y_t\, \d t-(1+h)\frac{x_ty_t}{z_t}\g_R(\xbf_t)\, \d t+\sqrt{2\kappa_2}\, \d W^2_t,\\
\d z_t&=(1-h)x_t\,\d t-\d k_t.
\end{aligned}
\end{equation}
Since the above system has globally Lipschitz drift terms, there exists a unique solution pair $(\xbf_t^R,k_t^R)$ for~\eqref{eqn:xyz:truncated} with initial condition $(x,y,z)$ \cite{lions1984stochastic,pilipenko2014introduction}. In particular, thanks to the simple structure of the reflecting boundary $\{z=1\}$, it holds that
\begin{align*}
z^R_t=z+(1-h)\int_0^t\close x^R_s\d s-k^R_t,
\end{align*}
where $k_t^R$ is explicitly given by (cf. \cite[Formula (1.2)]{pilipenko2014introduction})
\begin{align*}
k_t^R=-\min_{0\leq s\leq t}\bigg\{\bigg(1-z-(1-h)\int_0^s\close x^R_\ell\d \ell \bigg) \wedge 0\bigg\}.
\end{align*}
As a consequence, we may define the local solution $(\xbf_t,k_t)$ of~\eqref{eqn:xyz} such that with probability one, for all $0\le t\le \tau_R$,  $(\xbf_t,k_t)=(\xbf_t^R,k^R_t)$ where $\tau_R$ is the stopping time as in~\eqref{eqn:tau_R}. See also \cite[Section 2.4]{pilipenko2014introduction}. Note that nonexplosivity (Proposition~\ref{prop:ne}) follows immediately from Proposition~\ref{prop:Lyapunov:xyz} (b) and the bound
\begin{align*}
\PP_\xbf\{ \tau_R < t \} \leq \frac{ \E_\xbf \Psi(\xbf_{t\wedge \tau_R})}{\Psi_R} \leq \frac{c(x,t)}{\Psi_R},
\end{align*} 
satisfied for all $\xbf, t, R$, since $\Psi_R\rightarrow \infty$ as $R\rightarrow \infty$ by virtue of Proposition~\ref{prop:Lyapunov:xyz} (a).

\section{Generalized Tanaka/It\^{o} formula} \label{sec:Tanaka}

Consider a stochastic process $\xbf_t=(x^1_t,\dots,x^d_t,x^{d+1}_t)\in\rbb^d\times(0,1]$ solving the Skorokhod SDE 
\begin{align} \label{eqn:Skhorohod}
\d x^j_t=f^j(\xbf_t)\d t+\sum_{i=1}^{d+1} \mathrm{A}^{ij}(\xbf_t)\d W^{i}_t+g^{j}(\xbf_t)\d k_t,\quad j=1,\dots,d+1, 
\end{align}
where $\{W^j_t\}_{1\le j\le d+1}$ is a standard $(d+1)$-dimensional Brownian motion, $\mathrm{A}=(\mathrm{A}^{ij})$ is the diffusion matrix, and $k_t$ is the reflective process along the reflective vector field $g=(g^j)$ and satisfies the following properties:
\begin{itemize}
\item $k_0=0$, $k_t$ is an $\mathcal{F}_t-$adapted non-decreasing continuous process, and $\int_0^t |g(\xbf_s)|\d k_s<\infty.$
\item $k_t$ only increases when $x^{d+1}_t=1$, i.e., $\int_0^\infty\boldsymbol{1}\{x^{d+1}_t<1\}\d k_t=0$.
\end{itemize}
Setting $(f^{ij})=\mathrm{A}\mathrm{A}^T$, we consider the following operators associated with $\xbf_t$
\begin{align*}
\L&=\sum_{j=1}^{d+1}f^j\partial_{x^j}+\tfrac{1}{2}\sum_{i,j=1}^{d+1}f^{ij}\partial_{x^ix^j}\quad\text{and}\quad\Q=\sum_{j=1}^{d+1}g^j\partial_{x^j}.
\end{align*}
Let $\f\in C(\rbb^d\times(0,1];\rbb)$ be such that
\begin{align} \label{eqn:phi=phi1.or.phi2}
\f(x^1,\dots,x^d,x^{d+1})=\begin{cases}
\f_{1}(x^1,\dots,x^d,x^{d+1}),& x^d\le b(x^1,\dots,x^{d-1}),\\
\f_2(x^1,\dots,x^d,x^{d+1}),& x^d\ge b(x^1,\dots,x^{d-1}),
\end{cases}
\end{align}
where the $\f_i$'s are $C^2$ on their respective closed domains above and $b\in C^2(\rbb^{d-1};\rbb)$. Our Tanaka formula is given in the following result.
\begin{theorem} \label{thm:Tanaka}
Let $\tau_n=\inf\{t\ge 0: |(x_t^1,\dots,x_t^d)|\ge n \text{ or } x^{d+1}_t<1/n\}$ and $\zeta\in C^1([0,\infty);\rbb^+)$. Then, for all initial conditions $\xbf=(x^1,\dots,x^d,x^{d+1})\in\rbb^d\times(0,1]$,
\begin{equation} \label{eqn:Tanaka}
\begin{aligned}
&\E_{\xbf}[\zeta(t\mi\tau_n)\f(\xbf_{t\mi\tau_n})]\\
&=\zeta(0)\f(\xbf)+\E_\xbf\int_0^{t\mi\tau_n}\close\zeta'(s)\f(\xbf_s)\emph{d} s\\
&\qquad+\E_{\xbf}\int_0^{t\mi \tau_n} \close\zeta(s)\cdot\Big[ \tfrac{1}{2}\L\f(x^1_s,\dots,(x^d_s)^+,x^{d+1}_s)+\tfrac{1}{2}\L\f(x^1_s,\dots,(x^d_s)^-,x^{d+1}_s)\Big]\emph{d}  s\\
&\qquad+\E_{\xbf}\int_0^{t\mi\tau_n}\close\zeta(s)\cdot\Big[ \tfrac{1}{2}\Q\f(x^1_s,\dots,(x^d_s)^+,1)+\tfrac{1}{2}\Q\f(x^1_s,\dots,(x^d_s)^-,1)\Big]\emph{d} k_s\\
&\qquad +\emph{Flux}(\xbf,t,n),
\end{aligned}
\end{equation}
where, for any function $f:\RR^d\times(0,1]\rightarrow \RR$,
\begin{align*}
f(x^1,\dots,(x^d)^+,x^{d+1})&=\lim_{\xi\downarrow x^d}f(x^1,\dots,x^{d-1},\xi,x^{d+1}),\\
f(x^1,\dots,(x^d)^-,x^{d+1})&=\lim_{\xi\uparrow x^d}f(x^1,\dots,x^{d-1},\xi,x^{d+1}),
\end{align*}
provided the limits exist
and \emph{Flux}($\xbf,t,n$) satisfies the following properties:
\begin{itemize}
\item[\emph{(a)}] If $(\partial_{x^d}\f_2-\partial_{x^d}\f_1)\big|_{x^d=b(x^1,\dots,x^{d-1})}\le 0$, then \emph{Flux}($\xbf,t,n)\le 0$ and the maps $t\mapsto\emph{Flux}(\xbf,t,n)$, $n\mapsto\emph{Flux}(\xbf,t,n)$ are decreasing.
\item[\emph{(b)}] If $(\partial_{x^d}\f_2-\partial_{x^d}\f_1)\big|_{x^d=b(x^1,\dots,x^{d-1})}\ge 0$, then \emph{Flux}($\xbf,t,n)\ge 0$ and the maps $t\mapsto\emph{Flux}(\xbf,t,n)$, $n\mapsto\emph{Flux}(\xbf,t,n)$ are increasing.
\end{itemize}
\end{theorem}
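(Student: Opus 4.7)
The plan is to reduce to the non-reflective Tanaka formula of \cite{herzog2015noise, herzog2015noiseII}, treating the additional reflection term $\mathrm{d}k_t$ separately and making essential use of the fact that the interface $\{x^d = b(x^1,\ldots,x^{d-1})\}$ on which $\varphi$ is only continuous is transverse to (and generically disjoint from) the reflecting boundary $\{x^{d+1}=1\}$. First, I would straighten the interface by introducing the local change of coordinates $\tilde{x}^d = x^d - b(x^1,\ldots,x^{d-1})$, $\tilde{x}^j = x^j$ for $j\neq d$, applying standard It\^o on each side to derive the SDE satisfied by $\tilde{\mathbf{x}}_t$. Since $b$ is $C^2$ and independent of $x^{d+1}$, this does not affect the structure of the reflection term. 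In the new coordinates $\varphi = \tilde{\varphi}_1 \mathbf{1}_{\tilde{x}^d \leq 0} + \tilde{\varphi}_2 \mathbf{1}_{\tilde{x}^d \geq 0}$ with $\tilde{\varphi}_1, \tilde{\varphi}_2$ each $C^2$ up to $\{\tilde{x}^d=0\}$ and agreeing there.

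Next, I would regularize $\varphi$ by convolving the kink at $\{\tilde{x}^d=0\}$ with a one-dimensional mollifier $\rho_\epsilon$ acting in the $\tilde{x}^d$-variable only, producing $\varphi^\epsilon \in C^2$. Applying the standard It\^o formula for Skorokhod SDEs (see, e.g., \cite{pilipenko2014introduction}) to $\zeta(t\wedge\tau_n)\varphi^\epsilon(\mathbf{x}_{t\wedge\tau_n})$ yields exactly the identity in \eqref{eqn:Tanaka} with $\varphi$ replaced by $\varphi^\epsilon$ and with no Flux term. Passing $\epsilon\downarrow 0$, all drift and reflection integrands converge pointwise to their one-sided averages in the statement, and bounded convergence (using the stopping at $\tau_n$, which controls the Hessians of $\varphi_1,\varphi_2$ on the relevant compact set) handles the limit of these terms. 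The novel contribution comes from the second-derivative term $f^{dd}(\mathbf{x}_s)\partial^2_{\tilde{x}^d}\varphi^\epsilon(\mathbf{x}_s)$, which concentrates on $\{\tilde{x}^d=0\}$ and, by the occupation times formula for the semimartingale $\tilde{x}^d_t$, converges to a local time integral at zero weighted by the jump $(\partial_{\tilde{x}^d}\tilde{\varphi}_2 - \partial_{\tilde{x}^d}\tilde{\varphi}_1)\big|_{\tilde{x}^d=0}$. This limit is by definition our $\mathrm{Flux}(\mathbf{x},t,n)$.

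The sign and monotonicity assertions (a)--(b) then follow from the fact that the local time $L^0_t(\tilde{x}^d)$ of $\tilde{x}^d$ at zero is itself non-negative and non-decreasing in $t$, so $\mathrm{Flux}(\mathbf{x},t,n)$ inherits monotonicity in $t$ from $L^0_t$ up to the stopping time $\tau_n$; monotonicity in $n$ follows because $\tau_n$ is itself non-decreasing in $n$ so $t\wedge\tau_n$ is non-decreasing in $n$, and the sign is governed uniformly by the jump $(\partial_{x^d}\varphi_2-\partial_{x^d}\varphi_1)|_{x^d=b}$ under the hypotheses (a) or (b). The factor coming from the coefficient $f^{dd}$ is non-negative, so the sign of Flux is precisely the sign of the jump.

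The main obstacle, and the only place where the reflection complicates the classical argument, is to verify that the regularization procedure commutes with the local time term $\int_0^{t\wedge\tau_n}\zeta(s)\mathcal{Q}\varphi^\epsilon(\mathbf{x}_s)\,\mathrm{d}k_s$ as $\epsilon\downarrow 0$, and to see that no additional singular cross-contribution arises from simultaneous visits of $\mathbf{x}_t$ to both $\{\tilde{x}^d=0\}$ and $\{x^{d+1}=1\}$. The former holds by dominated convergence once one notes that $\partial_{\tilde{x}^d}\varphi^\epsilon$ is uniformly bounded on compact sets (hence so is $\mathcal{Q}\varphi^\epsilon$), and the limit produces the one-sided average $\tfrac{1}{2}[\mathcal{Q}\varphi(\cdot,(x^d)^+,1)+\mathcal{Q}\varphi(\cdot,(x^d)^-,1)]$. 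The absence of pathological cross-contributions is ensured by the independence of the two singular mechanisms: the reflective local time $k_t$ lives on $\{x^{d+1}=1\}$, while the interface local time lives on $\{\tilde{x}^d=0\}$, and the stopping at $\tau_n$ together with Brownian occupation estimates shows the measure of time spent on their intersection is zero almost surely. Assembling these pieces yields \eqref{eqn:Tanaka}.
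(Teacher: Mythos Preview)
Your proposal is correct and follows essentially the same strategy as the paper: mollify $\varphi$ across the interface, apply the standard It\^o/Dynkin formula for the Skorokhod SDE to the smoothed function, and pass to the limit, with the singular second-derivative contribution at the interface becoming the $\mathrm{Flux}$ term whose sign is governed by the jump $(\partial_{x^d}\varphi_2-\partial_{x^d}\varphi_1)|_{\Gamma}$ together with the nonnegativity of $(f^{ij})=\mathrm{A}\mathrm{A}^T$.

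The technical packaging differs slightly. You first straighten the interface and mollify only in the normal variable, then identify $\mathrm{Flux}$ via the occupation-times formula as a local-time integral for the semimartingale $\tilde x^d_t$; the paper instead mollifies in all $d{+}1$ variables without straightening, invokes \cite[Lemma~8.1]{herzog2015noiseII} to extract the surface integral over $\Gamma$ weighted by $\sum_{i,j}f^{ij}\sigma^i\sigma^j$, and leaves $\mathrm{Flux}$ as the $\epsilon\to 0$ limit of this expression. Your local-time formulation is a bit more explicit about what $\mathrm{Flux}$ actually is and makes the monotonicity in $t$ and $n$ transparent; the paper's version stays closer to the form needed in applications and avoids the coordinate change. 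Your final paragraph on cross-contributions between $\{\tilde x^d=0\}$ and $\{x^{d+1}=1\}$ is more than is needed: since $\mathcal{Q}$ is first order and $\varphi_1=\varphi_2$ on $\Gamma$, integration by parts in the $\mathcal{Q}\varphi_\epsilon$ term produces no boundary contribution at all (as the paper notes), so no singular $\mathrm{d}k$-term can arise and the dominated-convergence argument you give suffices.
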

The proof of Theorem~\ref{thm:Tanaka} is similar to that of \cite[Theorem 8.1]{herzog2015noiseII}. The only difference is the appearance of the first-order operator $\Q$. Since the argument is relatively short, we include it here for the sake of completeness.  The idea in the proof is to regularize $\f$ using a mollifier, integrate by parts and then pass to the limit. In the case $\f$ is also differentiable across the boundary, i.e., $\f\in C^1$, we refer the reader to \cite{gozzi2006weak} for another version of Tanaka formula.

\begin{proof}[Proof of Theorem~\ref{thm:Tanaka}] 

Since the bivariate process $(\xbf_t,k_t)$ is stopped at time $\tau_n$, without loss of generality, we assume all the functions $\f$, $f^i$, $f^{ij}$ and $g^i$ have compact supports inside a cylinder $\{|(x^1,\dots,x^d)|<R\}\cap \{x^{d+1}>1/R\}$ where $R$ is much larger than $n$.

Let $\chi:\rbb^{d}\times\rbb^+\to\rbb$ be a smooth mollifier, set $\chi_\epsilon(\xbf)=\epsilon^{-d-1}\chi(\epsilon^{-1}\xbf)$ and define
\begin{align*}
\f_{\epsilon}(\xbf)=\chi_\epsilon*\f(\xbf)= \int_{\rbb^d\times\rbb^+}\close \close \chi_\epsilon(\xbf-\ybf)\f(\ybf)\d \ybf.
\end{align*}
Also, by~\eqref{eqn:phi=phi1.or.phi2}, $\f_\epsilon$ can be expressed as
\begin{align} \label{eqn:phi_epsilon=phi1.+.phi2}
\f_{\epsilon}(\xbf)=\chi_\epsilon*(1_{\R^-}\f_1)(\xbf)+\chi_\epsilon*(1_{\R^+}\f_2)(\xbf),
\end{align}
where $\R^-=\{x^d\le b(x^1,\dots,x^{d-1})\}$ and $\R^+=\{x^d\ge b(x^1,\dots,x^{d-1})\}$. Applying Dynkin's formula gives
\begin{align} 
&\E_{\xbf}[\zeta(t\mi\tau_n)\f_\epsilon(\xbf_{t\mi\tau_n})]\label{eqn:LQ.phi_epsilon}\\
&=\zeta(0)\f_\epsilon(\xbf)+\E_\xbf\int_0^{t\mi\tau_n}\close\zeta'(s)\f_\epsilon(\xbf_s)\d s+\E_{\xbf}\int_0^{t\mi \tau_n} \close\zeta(s) \L\f_\epsilon(x^1_s,\dots,x^d_s,x^{d+1}_s)\d  s\nonumber\\
&\qquad+\E_{\xbf}\int_0^{t\mi\tau_n}\close\zeta(s) \Q\f_\epsilon(x^1_s,\dots,x^d_s,1)\d k_s.\nonumber
\end{align}

Concerning $\L\f_\epsilon$, in light of~\cite[Lemma 8.1]{herzog2015noiseII}, we readily have
\begin{align}
&\int_0^{t\mi\tau_n}\close\zeta(s) \L\f_\epsilon(\xbf_s)\d s\label{eqn:L.phi_epsilon}\\
&= \sum_{j=1}^{d+1}\int_0^{t\mi\tau_n}\close\zeta(s) f^j(\xbf_s)(\chi_\epsilon*1_{\R^-}\partial_{x^j}\f_1)(\xbf_s)+f^j(\xbf_s)(\chi_\epsilon*1_{\R^+}\partial_{x^j}\f_2)(\xbf_s)\d s\nonumber \\
&\quad+\tfrac{1}{2}\sum_{i,j=1}^{d+1}\int_0^{t\mi\tau_n}\close\zeta(s) f^{ij}(\xbf_s)(\chi_\epsilon*1_{\R^-}\partial_{x^ix^j}\f_1)(\xbf_s)+f^{ij}(\xbf_s)(\chi_\epsilon*1_{\R^+}\partial_{x^ix^j}\f_2)(\xbf_s)\nonumber \\
&\quad+\tfrac{1}{2} \int_0^{t\mi\tau_n}\close\zeta(s) \int_{\Gamma}(\partial_{x^d}\f_2-\partial_{x^d}\f_1)(\xbf_s)\chi_\epsilon(\xbf_s-\ybf)\sqrt{1+|\nabla b(\ybf)|^2}\sum_{i,j=1}^{d}f^{ij}(\xbf_s)\sigma^i\sigma^j\d S_\Gamma\d s.\nonumber 
\end{align}
In the above, $\Gamma=\{\xbf:x^d=b(x^1,\dots,x^{d-1})\}$ and $\sigma^i$ is the $i$-th component of the unit surface normal vector $\sigma=(-\nabla b(x^1,\dots,x^{d-1}),1)/\sqrt{1+|\nabla b(x^1,\dots,x^{d-1})|^2}$.

Concerning $\Q\f_\epsilon$, we first take the derivatives on both sides of~\eqref{eqn:phi_epsilon=phi1.+.phi2} to obtain
\begin{align*}
\partial_{x^j}\f_\epsilon(\xbf)&=-\partial_{x^j}\chi_\epsilon*(1_{\R^-}\f_1)(\xbf)-\partial_{x^j}\chi_\epsilon*(1_{\R^+}\f_2)(\xbf)\\
&=\chi_\epsilon*(1_{\R^-}\partial_{x^j})\f_1(\xbf)+\chi_\epsilon*(1_{\R^+}\partial_{x^j})\f_2(\xbf_s).
\end{align*}
In the last implication above, we performed an integration by parts using the fact that $\f_1$ and $\f_2$ agree on the boundary surface $\Gamma$. It follows that
\begin{equation}\label{eqn:Q.phi_epsilon}
\begin{aligned}
&\int_0^{t\mi\tau_n}\close\zeta(s) \Q\f_\epsilon(\xbf_s)\d k_s\\
&= \sum_{j=1}^{d+1}\int_0^{t\mi\tau_n}\close\zeta(s)  g^j(\xbf_s)(\chi_\epsilon*1_{\R^-}\partial_{x^j}\f_1)(\xbf_s)+g^j(\xbf_s)(\chi_\epsilon*1_{\R^+}\partial_{x^j}\f_2)(\xbf_s)\d k_s .
\end{aligned}
\end{equation}
Combining~\eqref{eqn:Q.phi_epsilon} with~\eqref{eqn:LQ.phi_epsilon} and~\eqref{eqn:L.phi_epsilon}, since all functions therein are compactly supported and $\d k_s\ge 0$, we obtain by virtue of the Dominated Convergence Theorem as $\epsilon\to0$ 
\begin{align*}
&\E_{\xbf}[\zeta(t\mi\tau_n)\f(\xbf_{t\mi\tau_n})]-\zeta(0)\f(\xbf)-\E_\xbf\int_0^{t\mi\tau_n}\close\zeta'(s)\f(\xbf_s)\d s\\
&\qquad-\E_{\xbf}\int_0^{t\mi \tau_n} \close\zeta(s)\cdot\Big[ \tfrac{1}{2}\L\f(x^1_s,\dots,(x^d_s)^+,z_s)+\tfrac{1}{2}\L\f(x^1_s,\dots,(x^d_s)^-,x^{d+1}_s)\Big]\d  s\\
&\qquad-\E_{\xbf}\int_0^{t\mi\tau_n}\close\zeta(s)\cdot\Big[ \tfrac{1}{2}\Q\f(x^1_s,\dots,(x^d_s)^+,1)+\tfrac{1}{2}\Q\f(x^1_s,\dots,(x^d_s)^-,1)\Big]\d k_s\\
&=\lim_{\epsilon\to 0}\tfrac{1}{2} \int_0^{t\mi\tau_n}\close\zeta(s) \int_{\Gamma}(\partial_{x^d}\f_1-\partial_{x^d}\f_1)(\xbf_s)\chi_\epsilon(\xbf_s-\ybf)\sqrt{1+|\nabla b(\ybf)|^2}\sum_{i,j=1}^{d}f^{ij}(\xbf_s)\sigma^i\sigma^j\d S_\Gamma\d s\\
&=:\text{Flux}(\xbf,t,n),
\end{align*}
which establishes formula~\eqref{eqn:Tanaka}. 

Concerning $\text{Flux}(\xbf,t,n)$, we recall that the matrix $(f^{ij})=\mathrm{A}\mathrm{A}^T$ is non-negative so that
\begin{align*}
\chi_\epsilon(\xbf_s-\ybf)\sqrt{1+|\nabla b(\ybf)|^2}\sum_{i,j=1}^{d}f^{ij}(\xbf_s)\sigma^i\sigma^j\d S_\Gamma\ge 0.
\end{align*}
Combining with $\zeta(s)\ge 0$, it follows that $\text{Flux}(\xbf,t,n)$ satisfies the claimed properties.
\end{proof}

\section{Proof of auxiliary results} \label{sec:auxiliary-result}

\begin{proof}[Proof of Lemma~\ref{lem:mu_h>0}]
Let $\psi \in C(\RR^2)$ be an averaging Lyapunov function corresponding to $\A$ satisfying $\psi=o(\f_1)$ as $ r \rightarrow \infty$  and~\eqref{eqn:avgb} for some $C_1, \epsilon >0$. In view of~\eqref{eqn:E.Phi_O}, $\f_1$ satisfies 
\begin{align*}
\E_{(U, V )} \f_1(U_{t\wedge \tau_n},  V _{t\wedge \tau_n}) \leq \f_1(U, V ) + \E_{(U, V )} \int_0^{t\wedge \tau_n}\close\close  -  c_1  r _s^{p_1+1} +c_2 \, \d s ,
\end{align*}
for all $n\in \mathbf{N}$, $t\geq 0$, $(U, V ) \in \RR^2$. In the above, for a slightly abuse of notation, we set $r_t:=\sqrt{U^2_t+V^2_t}$. By Remark~\ref{rem:averaging-Lyapunov}, any positive scalar multiple of $\psi$ is also an averaging Lyapunov function.  In particular, we note that the function
\begin{align}
\ghat(U, V )= \f_1(U, V ) + \tfrac{ c_2}{\epsilon} \psi(U, V ),
\end{align}
is bounded below by some constant $-K <0$ and satisfies the inequality for some constants $c_1,c_2>0$
\begin{align}
-K \leq \E_{(U, V )} \ghat(U_{t\wedge \tau_n},  V_{t\wedge \tau_n}) \leq \ghat(u, v ) + \E_{(U, V )} \int_0^{t\wedge \tau_n}\close -  c_1 r _s^{1+p_1} + \frac{c_2 C_1}{\epsilon} U_s  \, \d s 
\end{align}
for all $t\geq 0$, $n\in \mathbf{N}$ and $(u, v ) \in \RR^2$.  Note by the monotone and dominated convergence theorems, we can take $n\rightarrow \infty$ to see that, for any $t>0$,
\begin{align*}
-\frac{K}{t} \leq \frac{\ghat(U, V )}{t} + t^{-1} \E_{(U, V )}\int_0^t\close  -  c_1 r_s ^{p_1+1} + \frac{c_2 C_1}{\epsilon} U_s  \, \d s.   
\end{align*}   
Since $r^{p_1+1} \in L^1(\mu)$, it follows that $U\in L^1(\mu)$.  Upon taking $t\rightarrow \infty$ in the above, we find that by Birkhoff's ergodic theorem
\begin{align*}
\int_{\rbb^2}\close -c_1 r ^{p_1+1}+\frac{c_2 C_1}{\epsilon} U \mu(\d U,\d V )\ge 0,
\end{align*}
whence
\begin{align*}
\mu(U)=\int_{\rbb^2}\close U\,\mu(\d U,\d V ) \geq \frac{\epsilon c_1}{c_2C_1 }\int_{\RR^2}\close   r^{p_1+1}  \mu(\d U,\d V )>0.  
\end{align*}
The proof is thus complete.
\end{proof}

\begin{proof}[Proof of Lemma~\ref{lem:R3:G}]
Fix $\eta_*>0$. In view of \cite[Proof of Lemma 7.4]{herzog2015noise}, $G(\eta,s)$ admits the following explicit representation
\begin{align}\label{eqn:R3:G}
G(\eta,s)= \frac{\int_0^\infty t^{a-1} e^{-t^2/2}\cos(\sqrt{b}\eta t)\d t }{\int_0^\infty t^{a-1} e^{-t^2/2}\cos(\sqrt{b}\eta_* t)\d t  },\quad\text{where}\quad a = \frac{s}{h+3/2}\in(0,1),\quad b=\frac{h+3/2}{\kappa_2}>0,
\end{align}
and that its derivatives with respect to $\eta$ can be expressed as
\begin{equation}\label{eqn:R3:lem:G'G''}
\begin{aligned}
G'(\eta,s)&= -\sqrt{b}\frac{\int_0^\infty t^{a} e^{-t^2/2}\sin(\sqrt{b}\eta t)\d t }{\int_0^\infty t^{a-1} e^{-t^2/2}\cos(\sqrt{b}\eta_* t)\d t  },\\
\,\text{and}\,\,\, G''(\eta,s)&= -b\frac{\int_0^\infty t^{a+1} e^{-t^2/2}\cos(\sqrt{b}\eta t)\d t }{\int_0^\infty t^{a-1} e^{-t^2/2}\cos(\sqrt{b}\eta_* t)\d t  }.
\end{aligned}
\end{equation}
We first show that the above denominator is always positive. Indeed, we note that the function $t^{(a-1)/2}e^{-t/2}$ is completely monotone \cite{schilling2012bernstein} and by Hausdorff–Bernstein–Widder theorem, there exists a non-negative measure $\nu$ on $[0,\infty)$ such that for $t>0$,
\begin{align*}
t^{a-1}e^{-t/2} = \int_0^\infty\close  e^{-tx}\nu(\d x).
\end{align*}
We thus obtain the expression
\begin{align*}
\int_0^\infty\close  t^{a-1} e^{-t^2/2}\cos(\omega t)\d t = \int_0^\infty\close \int_0^\infty\close e^{-t^2 x}\nu(\d x)\cos(\omega t)\d t.
\end{align*}
We aim to use Fubini theorem to switch the order of integration. To do so, we note that since $0<a<1$,
\begin{align*}
\int_0^\infty\close \int_0^\infty\close e^{-t^2 x}\nu(\d x) \d t=\int_0^\infty\close  t^{a-1} e^{-t^2/2} \d t <\infty.
\end{align*}
It follows that for every $\omega\in\rbb$,
\begin{equation}\label{eqn:lem:R3:completemonotone}
\begin{aligned}
\int_0^\infty\close t^{a-1} e^{-t^2/2}\cos(\omega t)\d t &= \int_0^\infty\close \int_0^\infty\close e^{-t^2 x}\nu(\d x)\cos(\omega t)\d t\\
&= \int_0^\infty\close \int_0^\infty\close e^{-t^2 x}\cos(\omega t)\d t\,\nu(\d x)\\
&=\int_0^\infty\frac{\sqrt{\pi}}{x}e^{-\omega^2/4x}\nu(\d x)>0.
\end{aligned}
\end{equation}

To prove~\eqref{ineq:R3:eta.G'<0}, we note that~\eqref{eqn:lem:R3:completemonotone} implies that $G(\eta,s)$ is decreasing for $\eta\in[0,\eta_*]$, and by symmetry, is increasing on $\eta\in[-\eta_*,0]$. We thus conclude that $\eta G'(\eta,\beta_i)\leq 0$. 

With regards to the growth rates of $G$ and its derivatives, in light of \cite[Section 12.9]{olver2010nist} and recalling $\eta_*=C\sqrt{\kappa_2}$ as in~\eqref{cond:r*.eta*}, it holds that 
\begin{align*}
\int_0^\infty t^{a-1} e^{-t^2/2}\cos(\sqrt{b}\eta_* t)\d t\sim \Gamma(a)\cos(a\pi/2)\frac{1}{(\sqrt{\beta }\eta_*)^{a+1}},\quad\text{as}\quad \sqrt{\beta}\eta_*=\sqrt{h+3/2}\,C\to\infty.
\end{align*}
The bounds~\eqref{ineq:R3:eta.G'<0}, \eqref{ineq:R3:sup.G'(eta)} and \eqref{ineq:R3:sup.G''(eta)} now follow from the above asymptotics together with formulas~\eqref{eqn:R3:G} and~\eqref{eqn:R3:lem:G'G''}. This finishes the proof.

\end{proof}

\bibliographystyle{abbrv}
{\footnotesize\bibliography{uvz-bib}}

\end{document}